\documentclass[10pt,reqno]{amsart}

\usepackage[T1]{fontenc}
\usepackage{amssymb}
\usepackage{mathtools}      %
\usepackage[hidelinks]{hyperref}   %

\newcommand{\R}{\mathbb{R}}
\newcommand{\Z}{\mathbb{Z}}
\newcommand{\C}{\mathbb{C}}
\newcommand{\E}{\mathbb{E}}
\newcommand{\Hyp}{\mathbb{H}}
\newcommand{\Nil}{\mathrm{Nil}}
\newcommand{\Sol}{\mathrm{Sol}}
\newcommand{\Lie}{\mathrm{Lie}}
\newcommand{\nil}{\mathfrak{nil}}
\newcommand{\SLtilde}{\widetilde{\mathrm{SL}}_2\R}
\newcommand{\Isom}{\mathrm{Isom}}

\newcommand{\diff}{\mathrm{d}}
\newcommand{\trace}{\mathrm{Tr}}
\newcommand{\gm}{\mathrm{geom}}
\newcommand{\PSL}{\mathrm{PSL}}
\newcommand{\SL}{\mathrm{SL}}
\newcommand{\SO}{\mathrm{SO}}
\newcommand{\SE}{\mathrm{SE}}
\newcommand{\se}{\mathfrak{se}}
\newcommand{\id}{\mathrm{id}}
\newcommand{\g}{\mathfrak{g}}
\newcommand{\kk}{\mathfrak{k}}

\newcommand{\so}{\mathfrak{so}}
\newcommand{\sol}{\mathfrak{sol}}
\newcommand{\CS}{\mathrm{CS}}
\newcommand{\CSname}{CS}
\newcommand{\CSvol}[1]{\CS^{\mathrm{vol}}_{#1}}
\newcommand{\CSrot}[1]{\CS^{\mathrm{rot}}_{#1}}
\newcommand{\MC}{\mathrm{MC}}
\newcommand{\LC}{\mathrm{LC}}
\newcommand{\WZ}{\mathrm{WZ}}
\newcommand{\Vol}{\mathrm{Vol}}
\newcommand{\vol}{\mathrm{vol}}
\newcommand{\covol}{\mathrm{covol}}
\newcommand{\cupdot}{\mathbin{\smile}}
\newcommand{\p}{p}
\newcommand{\FrameBundle}{F(M,g)}
\newcommand{\Ad}{\mathrm{Ad}}
\newcommand{\ad}{\mathrm{ad}}
\newcommand{\triv}{\mathrm{triv}}
\renewcommand{\flat}{\mathrm{flat}}

\theoremstyle{plain}
\newtheorem{thm}{Theorem}[section]
\newtheorem*{thm*}{Theorem}
\newtheorem{prop}[thm]{Proposition}
\newtheorem{lem}[thm]{Lemma}
\newtheorem{cor}[thm]{Corollary}

\theoremstyle{definition}
\newtheorem{dfn}[thm]{Definition}
\newtheorem{exm}[thm]{Example}

\theoremstyle{remark}
\newtheorem{rmk}[thm]{Remark}

\theoremstyle{plain}
\newtheorem*{maintheorem}{Main Theorem}

\numberwithin{equation}{section}

\title[Chern--Simons invariants and volumes of representations]{Chern--Simons invariants and volumes of representations in Nil, Sol, and Euclidean geometries}
\author{Taiju Suzuki}
\address{Department of Mathematics, Institute of Science Tokyo, O-okayama, Meguro, Tokyo, 152-8551, Japan}
\email{suzuki.t.6577@m.isct.ac.jp}
\subjclass[2020]{Primary 57K31; Secondary 57K35, 53C30, 58J28.}
\keywords{Chern--Simons invariant, volume of representations, Nil geometry, Sol geometry, Euclidean geometry, 3-manifold}
\date{}

\begin{document}

\begin{abstract}
    In this paper, we realize volumes of representations as real-valued Chern--Simons invariants in $\Nil$, $\Sol$, and Euclidean geometries.
    To this end, we formulate a Chern--Simons invariant of a pair of connections on a principal bundle that need not be trivial.
    For a connected closed oriented $3$-manifold $M$ and a representation $\rho \colon \pi_1(M) \to G$ into the identity component $G$ of the isometry group of one of these geometries, we construct an auxiliary connection on the associated flat $G$-bundle.
    We show that, for a suitably normalized invariant polynomial, the Chern--Simons invariant of the auxiliary and flat connections equals the volume of the representation.
    For the holonomy representation of a geometric structure, this invariant recovers the Riemannian volume.
    We also compute the Chern--Simons invariant of the Levi-Civita connection for representative closed manifolds in each of these geometries.
\end{abstract}

\maketitle

\tableofcontents

\section{Introduction}
\label{sec:introduction}
Chern--Simons invariants, introduced by Chern and Simons \cite{CS1}, appear in many problems across geometry, topology, and mathematical physics.
Meanwhile, in $3$-manifold topology, the geometrization theorem states that every closed oriented $3$-manifold decomposes into pieces, each of which supports one of the eight Thurston geometries.
The purpose of this paper is to realize volumes of representations as real-valued Chern--Simons invariants in three of these geometries.

Throughout this paper, all manifolds are connected and oriented unless stated otherwise.
Let $G$ be a connected Lie group and $K \subset G$ a closed subgroup such that $X = G/K$ is a contractible $3$-manifold, and fix a $G$-invariant volume form on $X$.
For a closed $3$-manifold $M$, any representation $\rho \colon \pi_1(M) \to G$ then determines the volume $\Vol_G(M,\rho) \in \R$ of the representation (\cite[Notation 2.4]{DLSW} and Section~\ref{subsec:representation_volume}).
In hyperbolic geometry ($G = \Isom^+\Hyp^3 = \PSL_2\C$), the imaginary part of the complex Chern--Simons invariant of a closed hyperbolic $3$-manifold is a constant multiple of the hyperbolic volume \cite{Yoshida,Neumann_Zagier}.
More generally, for a closed $3$-manifold $M$ that is not necessarily hyperbolic, the same relation holds for the volume of any representation $\rho \colon \pi_1(M) \to \PSL_2\C$ whose associated flat principal $G$-bundle $P_\rho$ is trivial (see \cite[Proposition 1.9(2)]{DLW}).
Similarly, in $\SLtilde$-geometry ($G=\Isom_e\SLtilde$), volumes of representations have been related to Chern--Simons invariants (\cite{BrooksGoldman,Khoi} and \cite[Proposition 1.9(1)]{DLW}).

In this paper, we treat $\Nil$-geometry, $\Sol$-geometry, and Euclidean geometry, where the model space $X$ is one of $\Nil$, $\Sol$, and $\E^3$, each of which is a simply connected Lie group acting on itself by left translations.
We take $G = \Isom_e X$, the identity component of the isometry group, which decomposes as the semidirect product
\begin{equation}
    \Isom_e X = X \rtimes K, \qquad K = \SO(2),\ \{1\},\ \SO(3),
    \label{eq:semidirect_decomposition}
\end{equation}
where $K$ is the stabilizer of the identity element of $X$ \cite[\S3]{HaLee}.
Throughout, the structure group is this identity component rather than the full isometry group.
In particular, a geometric structure on $M$ is understood to have holonomy in $\Isom_e X$ (Remark~\ref{rmk:structure_group_connected_components}).
For such a structure, we take the standard metric on $X$ specified in each geometry, denote by $g$ the induced metric on $M$, and orient $M$ via the developing map.
Since $G/K \cong X$ is contractible and carries a $G$-invariant volume form, unique up to a constant multiple and normalized in each of Sections~\ref{sec:Nil}--\ref{sec:Euclid}, the volume of any representation is well-defined.
Chern--Simons forms and invariants are taken with respect to a $G$-invariant polynomial $\p$, that is, an $\Ad(G)$-invariant symmetric bilinear form on the Lie algebra of $G$.
The choice of $\p$ is made separately for the three geometries.

In these three geometries, two difficulties arise.
First, the flat $G$-bundle $P_\rho$ need not be trivial, so that a global section of $P_\rho$, that is, a trivialization, is not available in general.
Second, even when $P_\rho$ is trivial, pulling back the Chern--Simons form of the flat connection $\omega_\rho$ on $P_\rho$ by a global section $s \colon M \to P_\rho$ does not yield the volume of the representation.
In $\Nil$- and $\Sol$-geometry the resulting $3$-form is exact, so that its integral vanishes (Propositions~\ref{prop:flat_and_trivial_connection_cs} and~\ref{prop:cs_triv_flat_sol_exact} together with Remark~\ref{rmk:rel_cs_and_absol_cs}).
In Euclidean geometry, for a suitable normalization of $\p$, its integral defines an $\R/\Z$-valued invariant determined by the $\so(3)$-components of $s^*\omega_\rho$ alone (Theorem~\ref{thm:cs_inv_flat_euclid}).

To handle the first difficulty, we employ the relative Chern--Simons form (\cite{Freed2,DupontJohansen} and Remark~\ref{rmk:rel_cs_and_absol_cs}), which is defined on the base space $M$ for a pair of connections without any trivialization of $P_\rho$.
Its ambiguity under gauge transformations is controlled by the period group $\Lambda_\p(P_\rho) \subset \R$ (Definition~\ref{dfn:gauge_period_P}), and the Chern--Simons invariant of a pair of connections is defined modulo this group (Definition~\ref{dfn:cs_inv_definition}).
We give a vanishing criterion for the period group (Proposition~\ref{prop:period_is_zero} and Corollary~\ref{cor:period_is_zero_criteria}) and verify it in each of the three geometries, so that the invariant is $\R$-valued.

To handle the second difficulty, we pair $\omega_\rho$ with an auxiliary connection on $P_\rho$.
Even though $P_\rho$ may be non-trivial, the fiber bundle $P_\rho/K \to M$ always admits a section because the fiber $G/K \cong X$ is contractible, and from such a section we construct the auxiliary connection by an explicit formula in terms of the components of $\omega_\rho$.
A direct computation using the flatness of $\omega_\rho$ shows that the relative Chern--Simons form of the auxiliary connection and $\omega_\rho$ equals the volume form of the representation for a suitably normalized $G$-invariant polynomial $\p$.

Our main result is as follows.

\begin{maintheorem}[Theorems~\ref{thm:cs_inv_is_vol_nil}, \ref{thm:cs_inv_vol_sol}, and~\ref{thm:euclidean_cs_equals_volume}]
    Let $X$ be one of $\Nil$, $\Sol$, and $\E^3$ with the standard metric and the volume form fixed in each geometry, and set $G=\Isom_e X$.
    Let $M$ be a connected closed oriented $3$-manifold, and let $\rho \colon \pi_1(M) \to G$ be a representation. 
    Then the flat connection $\omega_\rho$ and an auxiliary connection constructed from $\omega_\rho$ determine an $\R$-valued Chern--Simons invariant $\CSvol{X}(M,\rho)$ satisfying
    \[
        \CSvol{X}(M,\rho) = \Vol_G(M,\rho) \in \R.
    \]
\end{maintheorem}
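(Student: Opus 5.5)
The approach is to work locally on $M$ through a section of $P_\rho/K$ and reduce everything to an identity between $3$-forms. Since the fiber $G/K\cong X$ is contractible, the bundle $P_\rho/K\to M$ has a section $\sigma$. Equivalently, $\sigma$ gives a $\rho$-equivariant map $D\colon\widetilde M\to X$, and the volume of the representation is by definition $\Vol_G(M,\rho)=\int_M D^*\vol_X$, descended to $M$. Over an open set $U\subset M$ I will choose a local section $s\colon U\to P_\rho$ lifting $\sigma$; such an $s$ is unique up to right multiplication by a $K$-valued function. I then decompose $s^*\omega_\rho=\theta+\kappa$ according to $\g=\mathfrak{x}\oplus\kk$, where $\mathfrak{x}=\Lie X$. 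The $\mathfrak{x}$-part $\theta$ is the pullback under $D$ of the left-invariant Maurer--Cartan coframe of $X$, suitably twisted by $K$. In particular, $\vol_X$ pulls back to a constant multiple of $\theta^1\wedge\theta^2\wedge\theta^3$.

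Next I define the auxiliary connection $\omega_{\mathrm{aux}}$ locally by an explicit formula in $\theta$ and $\kappa$. The natural first try is the $K$-reduction $s^*\omega_{\mathrm{aux}}=\kappa$, which drops the translational part. If that turns out too degenerate, I will instead add a term $\Phi(\theta)$, where $\Phi$ is an $\Ad(K)$-equivariant linear map, for example $\mathfrak{x}\to\kk$ in $\Nil$ along the $\SO(2)$ rotation about the center. Two checks are needed. The first is that $\omega_{\mathrm{aux}}$ glues to a global connection: under a change $s\mapsto sk$ the forms $\theta,\kappa$ transform by $\Ad(k^{-1})$ plus $k^{-1}dk$ in the $\kk$-part, so $\Ad(K)$-equivariance of the formula makes it globally defined. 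The second is that it is independent of $\sigma$ up to a gauge and homotopy, since two sections are homotopic.

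The core is the relative Chern--Simons form
\[
\CS_\p(\omega_{\mathrm{aux}},\omega_\rho)=2\int_0^1\p\bigl(\omega_\rho-\omega_{\mathrm{aux}},F_t\bigr)\,dt ,
\qquad \omega_t=\omega_{\mathrm{aux}}+t(\omega_\rho-\omega_{\mathrm{aux}}),
\]
which is a global $3$-form on $M$ because the difference of the two connections is tensorial. I pull it back by $s$ and expand using flatness: $d\theta+\tfrac12[\theta,\theta]_{\mathfrak{x}}+[\kappa,\theta]=0$, and the $\kk$-component of the same equation. With these relations the curvature $F_t$ becomes a combination of $t(1-t)$ times brackets in $\theta$ and $\kappa$. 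After integrating in $t$, the form reduces to a multiple of $\p(\theta,[\theta,\theta])$, plus possibly $\kappa$-terms that I must show cancel, or combine into $\p$-invariance identities. The polynomial $\p$ is then normalized so that this multiple equals $\vol_X$ evaluated on $\theta$. In $\Nil$ the relevant $\p$ pairs the central direction with the $\SO(2)$ generator, in $\Sol$ it pairs the $\R$-direction appropriately, and in Euclidean geometry it couples $\R^3$ with $\so(3)$. In each case $\p(\theta,[\theta,\theta])$ is a nonzero multiple of $\theta^1\wedge\theta^2\wedge\theta^3$, the bracket $[\theta,\theta]$ being taken in $\g$ rather than in $\mathfrak{x}$.

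Finally, since the period group vanishes in each geometry by Proposition~\ref{prop:period_is_zero} and Corollary~\ref{cor:period_is_zero_criteria}, the invariant is $\R$-valued, and integrating the pointwise identity over $M$ gives $\CSvol{X}(M,\rho)=\int_M D^*\vol_X=\Vol_G(M,\rho)$. I expect the main obstacle to be the third step. Finding an auxiliary formula and an invariant $\p$ for which the cross terms in $\kappa$ cancel exactly, not merely up to an exact form, requires a case-by-case computation in each Lie algebra. If exact cancellation fails, the fallback is to show that the residual is $d$ of a global form, so that it integrates to zero.
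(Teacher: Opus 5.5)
There is a genuine gap. The proposal never fixes the auxiliary connection, and the mechanism you expect to produce the volume does not exist. Since $\mathfrak{x}$ ($=\nil$, $\sol$, $\R^3$) is a subalgebra of $\g$, the bracket $[\theta,\theta]$ taken in $\g$ stays in $\mathfrak{x}$. Hence $\p(\theta,[\theta,\theta])\equiv 0$ for \emph{every} invariant $\p$ in all three geometries:
in $\Nil$ because $[\theta,\theta]\in\R Z$ and $\langle Z,\cdot\rangle$ vanishes on $\nil$ by \eqref{eq:inv_poly_nil};
in $\Sol$ because $[\theta,\theta]\in\mathrm{span}\{X,Y\}$ while only $\langle Z,Z\rangle$ can be nonzero;
in $\E^3$ because $\R^3$ is abelian.
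Accordingly, your first try $\omega_{\mathrm{aux}}=\kappa$ gives $a=-\theta$, $\diff_{\omega_\rho}a=-\frac12[\theta,\theta]$, and $\CS_\p(\omega_{\mathrm{aux}},\omega_\rho)=\frac16\p(\theta,[\theta,\theta])=0$ (compare Propositions~\ref{prop:flat_and_trivial_connection_cs} and~\ref{prop:cs_triv_flat_sol_exact}). The volume must come from exactly the terms you planned to cancel. These are the cross terms between the added piece and $\theta$, and the curvature of $\omega_{\mathrm{aux}}$, which your description of $F_t$ drops. With $\omega_\rho$ flat, $F_t=(1-t)\Omega_{\mathrm{aux}}-\frac{t(1-t)}{2}[b,b]$ with $b=\omega_\rho-\omega_{\mathrm{aux}}$, and in the paper's $\Nil$ computation the $c_2$-term comes from $\langle a,\Omega^\sigma\rangle$. (Also, your transgression integral, run from $\omega_{\mathrm{aux}}$ to $\omega_\rho$, computes $\CS_\p(\omega_\rho,\omega_{\mathrm{aux}})=-\CS_\p(\omega_{\mathrm{aux}},\omega_\rho)$.)

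Whether such terms survive depends on a specific choice, and the recipe you sketch, $\kappa+\Phi(\theta)$ with $\Phi$ $\Ad(K)$-equivariant into $\kk$, does not always supply one. In the paper's notation, your $\theta$ is $e^1X+e^2Y+e^3Z$, $\alpha X+\beta Y+\gamma Z$, respectively $\sum e^iP_i$; write $\widehat e=\sum e^iJ_i$.

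\emph{In $\Nil$} the recipe works. Equivariance forces $\Phi(\theta)=\lambda e^3R$, and one gets $\CS_\p=-\lambda^2c_2\,\sigma^*\vol_\rho$; the paper takes $\lambda=-1$. The surviving coefficient is $c_2=\langle R,R\rangle$, while the $\langle Z,R\rangle$-terms you point to cancel.

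\emph{In $\Sol$} we have $\kk=0$, so your auxiliary connection is $\omega_\triv^s$ and the form vanishes identically. The paper instead uses the non-$\kk$-valued connection $s^*\omega^s=(\alpha+\beta)Z$. Then $a$ acquires the $Z$-component $u=\alpha+\beta-\gamma$, and $\CS_\p=c\,u\wedge\diff u=2c\,\alpha\wedge\beta\wedge\gamma$.

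\emph{In $\E^3$} the only equivariant $\Phi\colon\R^3\to\so(3)$ is $P_i\mapsto\lambda J_i$. For $\kappa+\lambda\widehat e$ one computes $\CS_\p=2\lambda^3c_1\,\sigma^*\vol_\rho$: all $c_2$-contributions cancel. This forces $c_1\neq0$, and then three things go wrong:
\begin{enumerate}
\item $[\WZ_\p]\neq0$ and $\Lambda_\p(G)=8\pi^2c_1\Z$ by Lemma~\ref{lem:period_euclidean};
\item Corollary~\ref{cor:period_is_zero_criteria} no longer applies;
\item the period group is genuinely nonzero: on $S^3$ with $\rho$ trivial, the gauge transformation given by the double cover $S^3\to\SO(3)\subset G$ has period $\pm16\pi^2c_1$.
\end{enumerate}
So you do not get an $\R$-valued invariant. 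The paper keeps the translational part, $\omega^\sigma=\omega_\rho+\widehat e$. Then $a=\widehat e$ and $\CS_\p=(2c_1+6c_2)\,\sigma^*\vol_\rho$, which permits $c_1=0$, $c_2=1/6$.

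If you let $\Phi$ take values in all of $\g$, the paper's three connections are of your form. But then the whole proof consists of three tasks, all of which the proposal leaves open: choosing these connections, checking that the normalization of $\p$ is compatible with $\Lambda_\p(P_\rho)=0$, and carrying out the three computations.
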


The invariant $\CSvol{X}(M,\rho)$ is defined with respect to the $G$-invariant polynomial $\p$ fixed in each geometry, given by \eqref{eq:inv_poly_nil}, \eqref{eq:inv_poly_sol}, or \eqref{eq:inv_poly_euclid} and normalized as in Sections~\ref{subsec:nil_connection}, \ref{subsec:sol_connection}, and~\ref{subsec:euclid_connection}.
The auxiliary connection is constructed in those sections from a section of $P_\rho/K$.
The value $\CSvol{X}(M,\rho)$ is independent of the choice of the section of $P_\rho/K$ and invariant under conjugation of $\rho$, and no triviality of $P_\rho$ is assumed.
We call $\CSvol{X}(M,\rho)$ the \emph{volume-type Chern--Simons invariant}.
If $\rho_\gm$ is the holonomy representation of a geometric structure on $M$, the volume of the representation coincides with the Riemannian volume, yielding $\CSvol{X}(M,\rho_\gm) = \Vol(M,g)$ in all three geometries (Corollaries~\ref{cor:cs_inv_nil_geohol}, \ref{cor:cs_inv_sol_geohol}, and~\ref{cor:euclidean_cs_vol_geohom}).

We also compute the Chern--Simons invariant $\CS(M,g)$ of the Levi-Civita connection for representative closed manifolds supporting each of the three geometries.
Introduced by Chern and Simons \cite[\S6]{CS1}, this is an $\R/\Z$-valued invariant of a closed Riemannian $3$-manifold $(M,g)$ defined from the orthonormal frame bundle.
To carry out these computations, we establish a formula for $\CS(M,g)$ when $M$ is the quotient of a $3$-dimensional unimodular Lie group by a lattice and $g$ is induced by a left-invariant metric (Lemma~\ref{lem:metric_cs_left_invariant}).
The formula is expressed in terms of the structure constants of the Lie algebra.
With the standard metrics used in this paper, the quotient $M = \Gamma \backslash \Nil$ by a lattice $\Gamma \subset \Nil$ satisfies
\[
    \CS(M,g) \equiv - \frac{1}{16\pi^2} \Vol(M,g) \pmod{\Z}
\]
by Proposition~\ref{prop:metric_cs_nil}.
In contrast, the invariant vanishes in $\Sol$-geometry whenever the image of the holonomy representation is a lattice in $\Sol$ (Proposition~\ref{prop:metric_cs_sol}).
It also vanishes in Euclidean geometry for the flat metrics treated in Proposition~\ref{prop:metric_cs_euclid}, whose proof uses known computations of the eta invariant.

In Euclidean geometry, moreover, the Chern--Simons invariant of the Levi-Civita connection is itself recovered within our framework.
For the holonomy representation $\rho_\gm$, the flat $\SE(3)$-bundle $P_{\rho_\gm}$ is trivial as a principal bundle, and the invariant polynomial is described by two parameters $(c_1,c_2) \in \R^2$, as in \eqref{eq:inv_poly_euclid}.
Whether the resulting Chern--Simons invariant takes values in $\R/\Z$ or in $\R$ depends on the choice of the polynomial and of the pair of connections.
Corresponding to $c_1$, the flat connection and a trivial connection determine an $\R/\Z$-valued invariant, the rotation-type invariant $\CSrot{\E^3}(M,\rho_\gm)$, which agrees with $\CS(M,g)$ (Corollary~\ref{cor:cs_inv_flat_euclid_geometryhol}).
Corresponding to $c_2$, the auxiliary connection and the flat connection determine the $\R$-valued volume-type invariant $\CSvol{\E^3}(M,\rho_\gm)$ of Theorem~\ref{thm:euclidean_cs_equals_volume}.

\emph{Conventions and organization.}
From Section~\ref{sec:cs_inv_and_rep_volume} onwards, we write \CSname{} for Chern--Simons.
Throughout this paper, all manifolds and maps between manifolds are smooth.
Unless stated otherwise, manifolds are connected and oriented, $M$ denotes a closed $3$-manifold, and $G$ denotes a connected real Lie group.
All bundle sections are global.
In each section from Section~\ref{sec:Nil} onwards, $X$ denotes the model space, $G = \Isom_e X$, and $K$ the stabilizer appearing in \eqref{eq:semidirect_decomposition}.
Section~\ref{sec:cs_inv_and_rep_volume} reviews the relative Chern--Simons form, the Chern--Simons invariant, the volume of representations, and the Chern--Simons invariant of the Levi-Civita connection.
Sections~\ref{sec:Nil}, \ref{sec:Sol}, and~\ref{sec:Euclid} treat $\Nil$-geometry, $\Sol$-geometry, and Euclidean geometry, respectively.

\section{Preliminaries: Chern--Simons invariants and volumes of representations}
\label{sec:cs_inv_and_rep_volume}

In this section, we review the \CSname{} invariant and the volume of representations used throughout this paper.

\subsection{Chern--Simons invariants}
\label{subsec:CSinv}

Following \cite{CS1,Freed2,DupontJohansen}, we briefly review the relative \CSname{} form and its properties.
Throughout this section, $\pi \colon P \to M$ denotes a principal $G$-bundle.
We write $\mathcal{G}(P)$ for the group of gauge transformations of $P$.
Throughout, $\g$ and $\kk$ denote the Lie algebras of $G$ and of a closed subgroup $K\subset G$, and $\mathcal{A}^k(X;E)$ denotes the space of $E$-valued $k$-forms on a manifold $X$.
We write $\ad P\coloneqq P\times_{\Ad}\g$ for the adjoint bundle of $P$, and $B^*$ for the fundamental vector field on $P$ generated by $B\in\g$.

Given two connections $\omega_1$ and $\omega_0$ on $P$, set $a \coloneqq \omega_1 - \omega_0 \in \mathcal{A}^1(M; \ad P)$.
For a $G$-invariant polynomial $\p=\langle \cdot, \cdot \rangle$ of degree $2$, that is, an $\Ad(G)$-invariant symmetric bilinear form, the relative \CSname{} $3$-form is given by
\begin{equation}
	\CS_\p(\omega_1, \omega_0)
	= \langle a, \diff_{\omega_0} a \rangle + \frac{1}{3} \langle a, [a, a] \rangle + 2 \langle a, \Omega_0 \rangle \in \mathcal{A}^3(M;\R) \, .
	 \label{eq:cs_formula_general_l2}
\end{equation}
Here $\diff_{\omega_0} a \coloneqq \diff a + [\omega_0, a]$ is the covariant derivative with respect to $\omega_0$, and $\Omega_i \coloneqq \diff\omega_i+(1/2)[\omega_i,\omega_i]$ is the curvature of $\omega_i$.
The bracket of $\g$ and the bilinear form $\p$ are extended to $\g$-valued forms by combining them with the exterior product, so that $[\alpha\otimes\xi,\beta\otimes\eta]=(\alpha\wedge\beta)\otimes[\xi,\eta]$ and $\langle\alpha\otimes\xi,\beta\otimes\eta\rangle=(\alpha\wedge\beta)\langle\xi,\eta\rangle$; in particular $[a,a](u,v)=2[a(u),a(v)]$.
In particular, when $\omega_0$ is flat, this simplifies to
\begin{equation}
	\CS_\p(\omega_1, \omega_0) = \langle a, \diff_{\omega_0} a \rangle + \frac{1}{3} \langle a, [a, a] \rangle \in \mathcal{A}^3(M;\R) \, . \label{eq:cs_formula_flat_l2}
\end{equation}
If $\omega_1$ is also flat, the relation $\Omega_1 = \Omega_0 + \diff_{\omega_0}a + (1/2)[a,a]$ gives the Maurer--Cartan equation $\diff_{\omega_0}a + (1/2)[a,a] = 0$.
Substituting this into \eqref{eq:cs_formula_flat_l2}, we obtain
\begin{equation}
	\CS_\p(\omega_1, \omega_0) = -\frac{1}{6} \langle a, [a, a] \rangle \in \mathcal{A}^3(M;\R).
	\label{eq:cs_formula_flat_flat}
\end{equation}

We collect several properties of the \CSname{} form.
Let $\omega_0$, $\omega_1$, and $\omega_2$ be arbitrary connections on $P$.
\begin{enumerate}
	\item Antisymmetry {\cite[Equation 1.7]{DupontJohansen}}:
	      \begin{equation}
		      \CS_\p(\omega_1, \omega_0) = - \CS_\p(\omega_0, \omega_1) \in \mathcal{A}^{3}(M;\R). \label{eq:cs_antisymmetry}
	      \end{equation}
	\item Cocycle condition {\cite[Equation 1.14]{DupontJohansen}}:
	      \begin{equation}
		      \CS_\p(\omega_1, \omega_0) + \CS_\p(\omega_2, \omega_1) - \CS_\p(\omega_2, \omega_0) = 
			  \diff \langle \omega_1 - \omega_0,\ \omega_2 - \omega_1 \rangle \in \mathcal{A}^{3}(M;\R). \label{eq:cs_transitivity}
	      \end{equation}
	\item Naturality {\cite[Proposition 1.27(d)]{Freed1}}:
	      let $P' \to M'$ be a principal $G$-bundle with connections $\omega_0$ and $\omega_1$.
	      For any bundle map $F \colon P \to P'$ covering $f \colon M \to M'$, we have
	      \begin{equation}
		      \CS_\p(F^*\omega_1, F^*\omega_0) = f^*\CS_\p(\omega_1, \omega_0) \in \mathcal{A}^{3}(M;\R).
			  \label{eq:cs_naturality}
	      \end{equation}
	\item Diagonal gauge invariance: for every gauge transformation $\varphi \in \mathcal{G}(P)$,
	      \begin{equation}
		      \CS_\p(\varphi^*\omega_1, \varphi^*\omega_0) = \CS_\p(\omega_1, \omega_0) \in \mathcal{A}^{3}(M;\R). \label{eq:cs_diagonal_gauge_invariance}
	      \end{equation}
		  This is the special case of \eqref{eq:cs_naturality} in which the bundle map is a gauge transformation.
	\item Gauge transformation law {\cite[Proposition 1.27(e)]{Freed1}}: modulo exact forms on $P$, every gauge transformation $\varphi \in \mathcal{G}(P)$ satisfies
	     \begin{equation}
		    \pi^*\CS_\p(\varphi^*\omega_1, \omega_1) \equiv \pi^*\CS_\p(\varphi^*\omega_0, \omega_0)
			\equiv F_\varphi^* \WZ_\p \in \mathcal{A}^{3}(P;\R). \label{eq:cs_gauge_on_P}
	     \end{equation}
		Here $F_\varphi \colon P \to G$ is the map associated with $\varphi$, determined by $\varphi(u) = u \cdot F_\varphi(u)$ for $u \in P$; it satisfies $F_\varphi(ug) = g^{-1}F_\varphi(u)g$.
		We write $\omega_\MC$ for the Maurer--Cartan form of $G$ and define the Wess--Zumino form by
		\[
			\WZ_\p \coloneqq -\frac{1}{6}\langle \omega_\MC, [\omega_\MC,\omega_\MC] \rangle \in \mathcal{A}^3(G;\R).
		\]
	\end{enumerate}

\begin{rmk}
	\label{rmk:rel_cs_and_absol_cs}
	The (absolute) \CSname{} form of a connection $\omega$ \cite{CS1,Freed1} is defined on the total space $P$ by
	\[
		\CS_\p(\omega) \coloneqq \langle \omega , \diff \omega \rangle + \frac{1}{3} \langle \omega, [\omega,\omega] \rangle \in \mathcal{A}^3(P;\R).
	\]
	In contrast, the relative \CSname{} form is defined on the base space $M$.
	For any two connections $\omega_0$ and $\omega_1$, a direct computation shows that the two forms are related by
	\begin{equation}
		\CS_\p(\omega_1) - \CS_\p(\omega_0) = \pi^*\CS_\p(\omega_1, \omega_0) - \diff \langle \omega_0,\ \omega_1 - \omega_0 \rangle \in \mathcal{A}^3(P;\R).
		\label{eq:rel_cs_and_absol_cs}
	\end{equation}
	The properties \eqref{eq:cs_naturality}, \eqref{eq:cs_diagonal_gauge_invariance}, and \eqref{eq:cs_gauge_on_P} collected above are stated in the references for $\CS_\p(\omega)$, but they hold for the relative \CSname{} form as well:
	naturality \eqref{eq:cs_naturality}, and its special case \eqref{eq:cs_diagonal_gauge_invariance}, follow because each term of the defining formula \eqref{eq:cs_formula_general_l2} is natural with respect to bundle maps, and \eqref{eq:cs_gauge_on_P} follows from \eqref{eq:rel_cs_and_absol_cs}.
	In particular, when $P$ is trivial and $\omega_\triv^s$ is the trivial connection for a section $s \colon M \to P$, 
	pulling back \eqref{eq:rel_cs_and_absol_cs} by $s$ yields 
	\[
		\CS_\p(\omega, \omega_\triv^s) = s^*\CS_\p(\omega) \in \mathcal{A}^3(M;\R).
	\]
\end{rmk}

Unless stated otherwise, $\rho$ denotes an arbitrary representation $\pi_1(M) \to G$, with corresponding flat $G$-bundle $P_\rho \coloneqq \widetilde{M} \times_\rho G \to M$ and flat connection $\omega_\rho$, where $\widetilde{M}$ is the universal cover of $M$.
When $P_\rho$ is trivial, we write $\omega_\triv^s$ for the trivial connection associated with a section $s \colon M \to P_\rho$.
In each geometry, we pair the flat connection $\omega_\rho$ with an auxiliary connection constructed to produce the volume form of the representation.
To obtain an invariant that is independent of gauge transformations on either connection, we introduce the \CSname{} period group below.

\begin{dfn}
	\label{dfn:gauge_period_P}
	We define the period group $\Lambda_\p(P)$ of a principal $G$-bundle $P$ by
	\begin{equation}
		\Lambda_\p(P) \coloneqq \left \{ \left. \int_M \CS_\p(\varphi^*\omega, \omega) \,\in \R \right| \varphi \in \mathcal{G}(P) \right\} \subset \R . \label{eq:gauge_period_P}
	\end{equation}
	By Remark~\ref{rmk:period_well_defined}, this is a subgroup of $\R$ and is independent of the choice of the connection $\omega$.
\end{dfn}

\begin{rmk}
	\label{rmk:period_well_defined}
	Since $M$ is closed, the cocycle condition \eqref{eq:cs_transitivity} and Stokes' theorem give, for arbitrary connections $\omega_0$, $\omega_1$, and $\omega_2$,
	\begin{equation}
		\int_M \CS_\p(\omega_1,\omega_0) + \int_M \CS_\p(\omega_2,\omega_1) = \int_M \CS_\p(\omega_2,\omega_0).
		\label{eq:integral_cocycle}
	\end{equation}
	Applying this repeatedly to $\varphi^*\omega_1$, $\varphi^*\omega_0$, $\omega_0$, and $\omega_1$, and using antisymmetry \eqref{eq:cs_antisymmetry} together with diagonal gauge invariance \eqref{eq:cs_diagonal_gauge_invariance}, we obtain
	\[
		\int_M \CS_\p(\varphi^*\omega_1,\omega_1) = \int_M \CS_\p(\varphi^*\omega_0,\omega_0) ,
	\]
	so that $\Lambda_\p(P)$ does not depend on the choice of the connection.
	Applying \eqref{eq:integral_cocycle} and this independence to $(\varphi\psi)^*\omega = \psi^*(\varphi^*\omega)$ shows that $\varphi \mapsto \int_M \CS_\p(\varphi^*\omega,\omega)$ is a group homomorphism $\mathcal{G}(P) \to \R$, and hence $\Lambda_\p(P)$, being its image, is a subgroup of $\R$.
\end{rmk}

We now use the period group to define the \CSname{} invariant associated with two connections $\omega_0$ and $\omega_1$.
First, by \eqref{eq:integral_cocycle} and antisymmetry \eqref{eq:cs_antisymmetry}, every gauge transformation $\varphi \in \mathcal{G}(P)$ satisfies
\begin{align*}
	\int_M \CS_\p(\omega_1, \varphi^*\omega_0) &= \int_M \CS_\p(\omega_1,\omega_0) - \int_M \CS_\p(\varphi^*\omega_0,\omega_0) , \\
	\int_M \CS_\p(\varphi^*\omega_1,\omega_0) &= \int_M \CS_\p(\omega_1,\omega_0) + \int_M \CS_\p(\varphi^*\omega_1,\omega_1) .
\end{align*}
In both cases the second term on the right-hand side lies in $\Lambda_\p(P)$, so that
\begin{align}
	\int_M \CS_\p(\omega_1,\omega_0)
	&\equiv \int_M \CS_\p(\omega_1, \varphi^*\omega_0) \equiv \int_M \CS_\p(\varphi^*\omega_1,\omega_0)  \pmod{\Lambda_\p(P)}.
	\label{eq:cs_inv_gauge_indep}
\end{align}
Modulo $\Lambda_\p(P)$, the integral of the \CSname{} form is thus unchanged under a gauge transformation of either of the two connections.
\begin{dfn}
	\label{dfn:cs_inv_definition}
	We define the (relative) \CSname{} invariant associated with connections $\omega_0$ and $\omega_1$ on a principal $G$-bundle $P$ by
	\begin{equation}
		\CS(M;\omega_1,\omega_0) \coloneqq \int_M \CS_\p(\omega_1,\omega_0) \pmod{\Lambda_\p(P)} .
		\label{eq:cs_inv_definition}
	\end{equation}
	By \eqref{eq:cs_inv_gauge_indep}, this is invariant under independent gauge transformations of the two connections.
\end{dfn}

The \CSname{} invariant $\CS(M;\omega_1,\omega_0)$ of Definition~\ref{dfn:cs_inv_definition} takes values in $\R/\Lambda_\p(P)$.
Directly computing $\Lambda_\p(P)$ is difficult because it depends on the gauge group $\mathcal{G}(P)$.
We provide two ways to control $\Lambda_\p(P)$:
First, when $P$ is trivial, we bound $\Lambda_\p(P)$ by a subgroup $\Lambda_\p(G)$ that depends only on the topology of $G$ (see \eqref{eq:universal_period} and Lemma~\ref{lem:expanded_period}).
This bound yields the $\R/\Z$-valued invariants in Sections~\ref{subsec:metric_cs} and~\ref{subsec:euclid_flat}.
Second, for flat bundles $P_\rho$, we give a criterion ensuring $\Lambda_\p(P_\rho) = 0$ (Proposition~\ref{prop:period_is_zero} and Corollary~\ref{cor:period_is_zero_criteria}), which allows us to formulate all main theorems with $\R$-valued invariants.

We first bound $\Lambda_\p(P)$ from above in the case of a trivial bundle.
For a connected real Lie group $G$ and an invariant polynomial $\p$, we define the period group $\Lambda_\p(G)$ of $G$ by
\begin{equation}
	\Lambda_\p(G) \coloneqq \left\{ \left. \int_{\gamma} \WZ_\p \in \R \,\right|\, [\gamma] \in H_{3}(G;\Z) \right\} \subset \R .
	\label{eq:universal_period}
\end{equation}
Since $[\gamma] \mapsto \int_\gamma \WZ_\p$ is a group homomorphism, $\Lambda_\p(G)$ is a subgroup of $\R$.

\begin{lem}
	\label{lem:expanded_period}
	Let $P$ be a trivial $G$-bundle.
	Then $\Lambda_\p(P) \subset \Lambda_\p(G)$.
	In particular, the \CSname{} invariant \eqref{eq:cs_inv_definition} is defined as an element of $\R/\Lambda_\p(G)$.
\end{lem}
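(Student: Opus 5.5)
Since $P$ is trivial, fix a section $s\colon M\to P$ and work with the trivial connection $\omega_0=\omega_\triv^s$. By Definition~\ref{dfn:gauge_period_P} and Remark~\ref{rmk:period_well_defined}, $\Lambda_\p(P)$ does not depend on the connection used to define it, so it suffices to show that $\int_M \CS_\p(\varphi^*\omega_0,\omega_0)\in\Lambda_\p(G)$ for every $\varphi\in\mathcal{G}(P)$.

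The section $s$ turns a gauge transformation $\varphi$ into a smooth map $f\coloneqq F_\varphi\circ s\colon M\to G$, characterized by $\varphi(s(x))=s(x)f(x)$. Both $\omega_0$ and $\varphi^*\omega_0$ are flat, since $\varphi^*\omega_0$ is the trivial connection attached to the section $\varphi\circ s$. Hence the flat--flat formula \eqref{eq:cs_formula_flat_flat} gives
\[
\CS_\p(\varphi^*\omega_0,\omega_0)=-\tfrac16\langle a,[a,a]\rangle, \qquad a=\varphi^*\omega_0-\omega_0 .
\]
Pulled back by $s$, the form $a$ becomes $s^*\varphi^*\omega_0=(\varphi\circ s)^*\omega_0$, and since $\varphi\circ s=s\cdot f$ this equals $\Ad(f^{-1})s^*\omega_0+f^*\omega_\MC=f^*\omega_\MC$, because $s^*\omega_0=0$.

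In the trivialization given by $s$, the pairing $\langle a,[a,a]\rangle$ is computed pointwise through $s^*a$, so
\[
\CS_\p(\varphi^*\omega_0,\omega_0)=-\tfrac16\langle f^*\omega_\MC,[f^*\omega_\MC,f^*\omega_\MC]\rangle=f^*\WZ_\p .
\]
Alternatively, pull \eqref{eq:cs_gauge_on_P} back by $s$ and use $F_\varphi\circ s=f$; the exact ambiguity on $P$ then integrates to zero on the closed manifold $M$. Either way,
\[
\int_M\CS_\p(\varphi^*\omega_0,\omega_0)=\int_M f^*\WZ_\p=\int_{f_*[M]}\WZ_\p ,
\]
where $f_*[M]\in H_3(G;\Z)$ is the pushforward of the fundamental class of the closed oriented $3$-manifold $M$. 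This pairing is well defined because $\WZ_\p$ is closed: it is bi-invariant, being built from an $\Ad$-invariant form. The integral therefore lies in $\Lambda_\p(G)$ as defined in \eqref{eq:universal_period}, which proves $\Lambda_\p(P)\subset\Lambda_\p(G)$.

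For the second assertion, the invariant \eqref{eq:cs_inv_definition} is well defined modulo $\Lambda_\p(P)$. Since $\Lambda_\p(P)\subset\Lambda_\p(G)$, it descends through the quotient map $\R/\Lambda_\p(P)\to\R/\Lambda_\p(G)$ to a well-defined element of $\R/\Lambda_\p(G)$.

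The main obstacle is the sign and convention bookkeeping in identifying $a$ with $f^*\omega_\MC$: one must check that $F_\varphi\circ s$, not its inverse, appears, and that the pullback convention for $\varphi^*$ matches. If the inverse appears instead, it only replaces the class $f_*[M]$ by its negative, since inversion on $G$ pulls $\WZ_\p$ back to $-\WZ_\p$, so membership in the subgroup $\Lambda_\p(G)$ is unaffected.
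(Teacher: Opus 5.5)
Your proof is correct and is essentially the paper's argument: fix $s$, set $f=F_\varphi\circ s$, and identify $\int_M\CS_\p(\varphi^*\omega,\omega)$ with $\int_{f_*[M]}\WZ_\p$. The only difference is that your main route checks the pulled-back gauge law \eqref{eq:cs_gauge_on_P} directly as the exact identity $\CS_\p(\varphi^*\omega_0,\omega_0)=f^*\WZ_\p$ for $\omega_0=\omega_\triv^s$ (using Remark~\ref{rmk:period_well_defined} to specialize the connection), whereas the paper quotes the law for an arbitrary $\omega$ and removes the exact term by Stokes; one harmless slip is that $\varphi^*\omega_0$ is the trivial connection of $\varphi^{-1}\circ s$, not $\varphi\circ s$, but you only use its flatness.
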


\begin{proof}
	Fix the section $s \colon M \to P$ determined by a trivialization $P \cong M \times G$.
	Pulling back the gauge transformation law \eqref{eq:cs_gauge_on_P} by $s$ and integrating over the closed manifold $M$, we obtain from Stokes' theorem
	\[
		\int_M \CS_\p(\varphi^*\omega,\omega) = \int_M f^*\WZ_\p = \int_{f_*[M]} \WZ_\p ,
	\]
	where $[M] \in H_{3}(M;\Z)$ is the fundamental class of $M$ and $f \coloneqq F_\varphi \circ s$.
	The right-hand side lies in $\Lambda_\p(G)$, and $\Lambda_\p(P) \subset \Lambda_\p(G)$ follows.
\end{proof}

Next, we give a sufficient condition for $\Lambda_\p(P_\rho) = 0$ in the case of a flat bundle.
From now on, let $K$ be a maximal compact subgroup of $G$.
Since $G$ is connected, the Cartan--Iwasawa--Malcev theorem shows that $G$ is diffeomorphic to $K \times \R^n$; in particular, the homogeneous space $G/K \cong \R^n$ is contractible.
The $G/K$-fiber bundle $P_\rho/K \to M$ therefore admits a section, and any two sections are homotopic.
For a section $\sigma \colon M \to P_\rho/K$, we write $Q_\sigma$ for the reduced sub-$K$-bundle, that is, the preimage of $\sigma(M)$ under the projection $P_\rho \to P_\rho/K$.
We write $\iota_\sigma \colon Q_\sigma \hookrightarrow P_\rho$ for its inclusion into $P_\rho$, and $\pi_\sigma \coloneqq \pi\circ\iota_\sigma \colon Q_\sigma \to M$ for the projection onto the base space.
We will use this contractibility and notation repeatedly throughout the paper.

\begin{prop}
	\label{prop:period_is_zero}
	If $[\WZ_\p] = 0$ in $H^{3}(G;\R)$ and the pullback map $\pi^* \colon H^{3}(M;\R) \to H^{3}(P_\rho;\R)$ is injective, then $\Lambda_\p(P_\rho) = 0$.
	In particular, the \CSname{} invariant takes values in $\R$.
\end{prop}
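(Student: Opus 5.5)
Our plan is to use the gauge transformation law \eqref{eq:cs_gauge_on_P} on the total space $P_\rho$ and then transfer the resulting cohomological statement down to $M$ via the injectivity of $\pi^*$. Fix any connection $\omega$ on $P_\rho$ (for instance $\omega_\rho$) and a gauge transformation $\varphi \in \mathcal{G}(P_\rho)$ with associated map $F_\varphi \colon P_\rho \to G$. By \eqref{eq:cs_gauge_on_P} (valid for the relative form by Remark~\ref{rmk:rel_cs_and_absol_cs}), there is a $2$-form $\beta$ on $P_\rho$ with
\[
	\pi^*\CS_\p(\varphi^*\omega,\omega) = F_\varphi^*\WZ_\p + \diff\beta \in \mathcal{A}^3(P_\rho;\R).
\]
Since $\WZ_\p$ is closed and $[\WZ_\p]=0$ in $H^3(G;\R)$, we may write $\WZ_\p = \diff\eta$ for some $\eta \in \mathcal{A}^2(G;\R)$. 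Hence $F_\varphi^*\WZ_\p = \diff(F_\varphi^*\eta)$, so $\pi^*\CS_\p(\varphi^*\omega,\omega)$ is exact on $P_\rho$. In other words, $\pi^*[\CS_\p(\varphi^*\omega,\omega)] = 0$ in $H^3(P_\rho;\R)$, where the class on $M$ makes sense because every $3$-form on the $3$-manifold $M$ is closed.

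The hypothesis that $\pi^*\colon H^3(M;\R)\to H^3(P_\rho;\R)$ is injective now gives $[\CS_\p(\varphi^*\omega,\omega)] = 0$ in $H^3(M;\R)$. Since $M$ is closed and oriented, integration over $M$ defines a map on $H^3(M;\R)$, so Stokes' theorem yields $\int_M \CS_\p(\varphi^*\omega,\omega) = 0$. As $\varphi$ was arbitrary, $\Lambda_\p(P_\rho)=0$ by Definition~\ref{dfn:gauge_period_P}, and then Definition~\ref{dfn:cs_inv_definition} shows that the invariant is $\R$-valued.

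We expect the only delicate point to be the precise meaning of \eqref{eq:cs_gauge_on_P}, namely that the congruence holds modulo exact forms on $P$ globally, and not merely locally or modulo forms that vanish upon integration. This is ensured by \eqref{eq:rel_cs_and_absol_cs}, since there the correction terms $\diff\langle\omega_0,\omega_1-\omega_0\rangle$ are globally exact on $P$. We would verify this by writing out the absolute gauge law $\CS_\p(\varphi^*\omega) - \CS_\p(\omega) = F_\varphi^*\WZ_\p + \diff(\cdot)$ with its explicit global $2$-form. Note also that flatness of $P_\rho$ plays no role beyond the injectivity hypothesis itself.
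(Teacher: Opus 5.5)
Your proposal is correct and follows essentially the same route as the paper's proof: the gauge transformation law \eqref{eq:cs_gauge_on_P} together with exactness of $\WZ_\p$ makes $\pi^*\CS_\p(\varphi^*\omega,\omega)$ exact on $P_\rho$. Injectivity of $\pi^*$ then makes $\CS_\p(\varphi^*\omega,\omega)$ exact on $M$, and Stokes' theorem gives vanishing integrals, so $\Lambda_\p(P_\rho)=0$. Your extra remark that the congruence in \eqref{eq:cs_gauge_on_P} is modulo globally exact forms on $P_\rho$, via the explicit correction term in \eqref{eq:rel_cs_and_absol_cs}, is a sound justification of a point the paper leaves implicit.
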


\begin{proof}
	By the gauge transformation law \eqref{eq:cs_gauge_on_P} on the total space, every gauge transformation $\varphi \in \mathcal{G}(P_\rho)$ satisfies $\pi^*\CS_\p(\varphi^*\omega,\omega) \equiv F_\varphi^*\WZ_\p$ modulo exact forms on $P_\rho$.
	By assumption $\WZ_\p$ is exact, hence so is its pullback $F_\varphi^*\WZ_\p$, and therefore $\pi^*\CS_\p(\varphi^*\omega,\omega)$ is exact on the total space.
	Since $\pi^*$ is assumed to be injective on cohomology, $\CS_\p(\varphi^*\omega,\omega)$ is exact on the base space $M$ as well.
	As $M$ is closed, Stokes' theorem gives $\int_M \CS_\p(\varphi^*\omega,\omega) = 0$.
	Since $\varphi$ is arbitrary, the definition \eqref{eq:gauge_period_P} of $\Lambda_\p(P_\rho)$ gives $\Lambda_\p(P_\rho) = 0$.
\end{proof}

Of the two hypotheses of Proposition~\ref{prop:period_is_zero}, the injectivity of $\pi^*$ is hard to verify directly.
In practice we use the following three cases, in which the injectivity is verified through a section or through the reduction to a maximal compact subgroup $K$.

\begin{cor}
	\label{cor:period_is_zero_criteria}
	In the notation of Proposition~\ref{prop:period_is_zero}, we have $\Lambda_\p(P_\rho) = 0$ if one of the following holds:
	\begin{enumerate}
	\item $K \cong \SO(2)$, and the real Euler class of the principal $S^1$-bundle obtained by reducing the structure group of $P_\rho$ to $K$ vanishes in $H^2(M;\R)$.
	\label{item:period_zero_euler}
	\item $[\WZ_\p] = 0 \in H^{3}(G;\R)$ and $P_\rho$ is a trivial $G$-bundle.
	\label{item:period_zero_trivial}
	\item $[\WZ_\p] = 0 \in H^{3}(G;\R)$ and $K$ satisfies $H^1(K;\R) = H^2(K;\R) = 0$.
	\label{item:period_zero_cohomology}
	\end{enumerate}
\end{cor}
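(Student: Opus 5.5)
Each case reduces to Proposition~\ref{prop:period_is_zero}: I will check that $[\WZ_\p]=0$ in $H^3(G;\R)$ and that $\pi^*\colon H^3(M;\R)\to H^3(P_\rho;\R)$ is injective. For injectivity, the plan is to replace $P_\rho$ by the reduced bundle $Q_\sigma$, and then either use a section or a spectral-sequence or Gysin argument for the $K$-bundle $Q_\sigma\to M$.

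\emph{Reduction to $Q_\sigma$.} Fix a section $\sigma\colon M\to P_\rho/K$, which exists because $G/K$ is contractible. The map $P_\rho\to P_\rho/K$ is a principal $K$-bundle, and $Q_\sigma$ is its restriction to the image $\sigma(M)$. The projection $P_\rho/K\to M$ has contractible fibers $G/K\cong\R^n$, so $\sigma(M)\hookrightarrow P_\rho/K$ is a homotopy equivalence; I would check this by a fiberwise deformation retraction onto $\sigma(M)$. By the homotopy lifting property of the $K$-bundle $P_\rho\to P_\rho/K$, the inclusion $\iota_\sigma\colon Q_\sigma\hookrightarrow P_\rho$ is then also a homotopy equivalence. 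Since $\pi_\sigma=\pi\circ\iota_\sigma$ and $\iota_\sigma^*$ is an isomorphism on cohomology, injectivity of $\pi^*$ on $H^3$ is equivalent to injectivity of $\pi_\sigma^*$ on $H^3$.

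\emph{Case (2).} A trivialization gives a section $s\colon M\to P_\rho$ with $\pi\circ s=\id_M$. Hence $s^*\pi^*=\id$ on $H^3(M;\R)$, so $\pi^*$ is injective, and Proposition~\ref{prop:period_is_zero} applies directly. The reduction to $Q_\sigma$ is not needed here.

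\emph{Case (3).} $K$ is connected, because $G\cong K\times\R^n$ is connected, so the Serre spectral sequence of $K\to Q_\sigma\to M$ has untwisted coefficients in the bottom row. Since $H^1(K;\R)=H^2(K;\R)=0$, we have $E_2^{p,1}=E_2^{p,2}=0$. The only differentials that could hit $E^{3,0}=H^3(M;\R)$ are $d_2\colon E_2^{1,1}\to E_2^{3,0}$ and $d_3\colon E_3^{0,2}\to E_3^{3,0}$, and both have zero source. Thus $E_\infty^{3,0}=H^3(M;\R)$. The edge map $E_\infty^{3,0}\hookrightarrow H^3(Q_\sigma;\R)$ is exactly $\pi_\sigma^*$, so $\pi_\sigma^*$ is injective.

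\emph{Case (1).} No hypothesis on $\WZ_\p$ is given here, so I first observe that it holds automatically. By the Cartan--Iwasawa--Malcev theorem, $G\simeq K\cong S^1$, so $H^3(G;\R)=0$ and hence $[\WZ_\p]=0$. For injectivity, $Q_\sigma\to M$ is a principal $S^1$-bundle with real Euler class $e$, and the Gysin sequence contains the segment
\[
H^1(M;\R)\xrightarrow{\ \cdot\smile e\ }H^3(M;\R)\xrightarrow{\ \pi_\sigma^*\ }H^3(Q_\sigma;\R).
\]
So $\ker\pi_\sigma^*=e\smile H^1(M;\R)$, which is zero when $e=0$.

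In all three cases Proposition~\ref{prop:period_is_zero} then gives $\Lambda_\p(P_\rho)=0$. The main obstacle is the homotopy-equivalence step $Q_\sigma\simeq P_\rho$, which cases (1) and (3) depend on. I would handle it via the contractible-fiber argument above rather than by any explicit formula.
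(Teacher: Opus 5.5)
Your proof is correct and matches the paper's in all three cases: in case (2) a section gives $s^*\pi^*=\id$; in case (3) the Serre spectral sequence of $Q_\sigma\to M$ gives $E_\infty^{3,0}=H^3(M;\R)$; and in case (1) $H^3(G;\R)\cong H^3(S^1;\R)=0$ together with the Gysin sequence suffices. The one difference is that the homotopy equivalence $Q_\sigma\simeq P_\rho$, which you flag as the main obstacle, is not needed: since $\pi_\sigma^*=\iota_\sigma^*\circ\pi^*$, injectivity of $\pi_\sigma^*$ already forces injectivity of $\pi^*$, and the paper concludes with exactly this observation.
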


\begin{proof}
	In each case it suffices to verify the two hypotheses of Proposition~\ref{prop:period_is_zero}.

	\emph{Case~\ref{item:period_zero_euler}.}
	Since $K\cong\SO(2)$ and $G/K$ is contractible, the inclusion $K\hookrightarrow G$ is a homotopy equivalence, so that $H^3(G;\R) \cong H^3(S^1;\R) = 0$ and $[\WZ_\p]=0$ holds automatically.
	In the Gysin sequence
	\[
		H^1(M;\R)\xrightarrow{\cupdot e}H^3(M;\R)\xrightarrow{\pi_\sigma^*}H^3(Q_\sigma;\R)
	\]
	of the principal $S^1$-bundle $\pi_\sigma \colon Q_\sigma \to M$, the real Euler class satisfies $e=0$ by assumption, so that $\pi_\sigma^* = \iota_\sigma^*\circ\pi^*$ is injective; hence so is $\pi^*$.

	\emph{Case~\ref{item:period_zero_trivial}.}
	As $P_\rho$ is a trivial $G$-bundle, it admits a section $s \colon M \to P_\rho$, and $\pi\circ s = \id_M$ gives $s^* \circ \pi^* = \id$ on cohomology.
	In particular, $\pi^*$ is injective.

	\emph{Case~\ref{item:period_zero_cohomology}.}
	Take again the reduced sub-$K$-bundle $\pi_\sigma \colon Q_\sigma \to M$ and consider its Serre spectral sequence $E_2^{p,q} = H^p(M;H^q(K;\R))$.
	Since $K$ is connected, the action of $\pi_1(M)$ on $H^*(K;\R)$ is trivial and the local system is trivial.
	The differentials $d_r \colon E_r^{3-r,r-1} \to E_r^{3,0}$ into $E^{3,0}$ all vanish: for $r=2$ the source is $H^1(M;H^1(K;\R)) = 0$; for $r=3$ it is a subquotient of $H^0(M;H^2(K;\R)) = 0$; and for $r \geq 4$ the first index $3-r$ is negative.
	The differentials out of $E^{3,0}$ vanish as well, since their second index $1-r$ is negative, and therefore $E_\infty^{3,0} = E_2^{3,0} = H^3(M;\R)$.
	Hence $\pi_\sigma^*$ is injective, and so is $\pi^*$.
\end{proof}

\subsection{Volume of representations}
\label{subsec:representation_volume}

In this subsection, we define the volume of a representation, written $\Vol_G(M,\rho)$.
We assume in addition that $\dim G/K = 3$, and we fix a $G$-invariant volume form $\vol_G \in \mathcal{A}^3(G/K)^G$ on $G/K$.
Since $\vol_G$ is $G$-invariant, the $3$-form $\mathrm{pr}_2^*\vol_G$ on $\widetilde{M} \times G/K$ is invariant under the action of $\pi_1(M)$.
It therefore defines a closed $3$-form $\vol_\rho \in \mathcal{A}^3(P_\rho/K;\R)$ on the quotient bundle $P_\rho/K$.
Its restriction to each fiber agrees with $\vol_G$ under the identification with $G/K$.

For an arbitrary representation $\rho \colon \pi_1(M) \to G$, we define the volume $\Vol_G(M, \rho)$ of the representation by
\begin{equation}
	\Vol_G(M, \rho) \coloneqq \int_M \sigma^*\vol_\rho \;\in\; \R ,
	\label{eq:representation_volume}
\end{equation}
where $\sigma \colon M \to P_\rho/K$ is an arbitrary section.
This definition coincides with the volume of representation in \cite[Notation 2.4]{DLSW}.
It is independent of the choice of section \cite[Lemma 2.2(1)]{DLSW} and invariant under conjugation of $\rho$ \cite[Proposition 2.3]{DLSW}.
We call $\sigma^*\vol_\rho$ the volume form of the representation with respect to $\sigma$.

\begin{rmk}
	\label{rmk:structure_group_connected_components}
	In this paper, we take the identity component of the isometry group as the structure group, so that $G$ is one of $\Isom_e\Nil$, $\Isom_e\Sol$, and $\Isom_e\E^3$.
	We restrict to the identity component because our argument for the contractibility of $G/K$ requires $G$ to be connected, and the orientation of $M$ via the developing map requires orientation-preserving isometries.
	Manifolds carrying a geometric structure whose holonomy does not lie in $\Isom_e X$ therefore fall outside our scope.
	For these three groups $G$, the stabilizer $K$ in \eqref{eq:semidirect_decomposition} is a maximal compact subgroup of $G$, and the homogeneous space $G/K$ is $\Nil$, $\Sol$, and $\E^3$, respectively; each of them is contractible, and a $G$-invariant volume form exists and is unique up to a constant multiple (see also \cite[Lemma 2.1(2)]{DLSW}).
	Fixing a $G$-invariant volume form in each geometry thus determines the volume \eqref{eq:representation_volume} of a representation.
\end{rmk}

We collect the properties of the volume of representations.
For $(X,G)$-structures and their developing maps and holonomy representations, we follow \cite{Thurston}.

\begin{lem}
	\label{lem:rep_vol_properties}
	\begin{enumerate}
	\item
	\label{item:rep_vol_naturality}
	Let $N$ be a closed $3$-manifold and let $f \colon M \to N$ be a smooth map.
	Fix base points, and let $f_* \colon \pi_1(M) \to \pi_1(N)$ be the induced homomorphism.
	Then every representation $\rho \colon \pi_1(N) \to G$ satisfies
	\begin{equation}
		\Vol_G(M, \rho\circ f_*) = \deg(f)\cdot\Vol_G(N, \rho).
		\label{eq:rep_vol_naturality}
	\end{equation}
	\item
	\label{item:rep_vol_geometric_holonomy}
	Suppose that $G$ acts transitively on a manifold $X$ and that the stabilizer of a chosen point of $X$ is $K$, so that $X \cong G/K$.
	Let $\vol_G$ be the volume form of a $G$-invariant Riemannian metric on $X$.
	Suppose that $M$ admits an $(X,G)$-structure with holonomy representation $\rho_\gm \colon \pi_1(M) \to G$.
	Let $g$ be the metric on $M$ induced by the developing map $D \colon \widetilde{M} \to X$, and let $M$ be oriented via $D$.
	Then
	\begin{equation}
		\Vol_G(M,\rho_\gm) = \Vol(M,g),
		\label{eq:rep_vol_geometric_holonomy}
	\end{equation}
	where $\Vol(M,g)$ is the volume of the Riemannian manifold $(M,g)$.
	\end{enumerate}
\end{lem}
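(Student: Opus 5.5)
For part~\ref{item:rep_vol_naturality}, I would compare sections over $M$ and $N$ through the pullback bundle. Fix a base point and let $\tilde f\colon \widetilde M\to\widetilde N$ be a lift of $f$. It is equivariant with respect to $f_*$, so the map $\widetilde M\times G\to \widetilde N\times G$, $(x,g)\mapsto(\tilde f(x),g)$, descends to a bundle map
\[
F\colon P_{\rho\circ f_*}\to P_\rho
\]
covering $f$. Hence $P_{\rho\circ f_*}\cong f^*P_\rho$ as flat bundles. The map $F$ induces $\bar F\colon P_{\rho\circ f_*}/K\to P_\rho/K$ with $\bar F^*\vol_\rho=\vol_{\rho\circ f_*}$. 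This holds because both forms descend from $\mathrm{pr}_2^*\vol_G$, and $\tilde f\times\id$ commutes with $\mathrm{pr}_2$.

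Now choose any section $\sigma\colon N\to P_\rho/K$. The pullback section $f^*\sigma\colon M\to P_{\rho\circ f_*}/K$ satisfies $\bar F\circ f^*\sigma=\sigma\circ f$. Since $\Vol_G$ does not depend on the section (\cite[Lemma 2.2(1)]{DLSW}), we get
\[
\Vol_G(M,\rho\circ f_*)=\int_M (f^*\sigma)^*\bar F^*\vol_\rho=\int_M f^*(\sigma^*\vol_\rho)=\deg(f)\int_N\sigma^*\vol_\rho .
\]
The last equality is the standard fact that $\int_M f^*\eta=\deg(f)\int_N\eta$ for top-degree forms on closed oriented manifolds. The only point needing care is well-definedness of $F$ under the deck actions: $\gamma\cdot(x,g)=(\gamma x,\rho(f_*\gamma)g)$ maps to $(f_*\gamma\cdot\tilde f(x),\rho(f_*\gamma)g)$.

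For part~\ref{item:rep_vol_geometric_holonomy}, the developing map gives a canonical section. Since $D(\gamma x)=\rho_\gm(\gamma)D(x)$, the map $x\mapsto(x,D(x))\in\widetilde M\times X$ is $\pi_1(M)$-equivariant. It descends to a section $\sigma_D\colon M\to P_{\rho_\gm}/K=\widetilde M\times_{\rho_\gm}X$. Pulling back, $\sigma_D^*\vol_\rho$ lifts to $\widetilde M$ as $(\id,D)^*\mathrm{pr}_2^*\vol_G=D^*\vol_G$. Because $D$ is a local isometry for the lifted metric $\tilde g$ and is orientation-preserving by the choice of orientation, $D^*\vol_G$ is the Riemannian volume form of $\tilde g$. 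This form descends to $\vol_g$ on $M$, so integrating gives $\Vol(M,g)$.

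The main obstacle is essentially bookkeeping. One has to check that the identification of $P_\rho/K$ with $\widetilde M\times_\rho X$ matches the construction of $\vol_\rho$, using $G/K\cong X$ via the orbit map of the chosen point. One also has to confirm that the sign conventions for the orientation agree. Both steps are direct from the definitions.
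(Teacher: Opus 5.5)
Your proposal is correct.

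\textbf{Part (2).} This is the paper's argument: the $\rho_\gm$-equivariant developing map gives the section $\sigma_D$, and the pullback $\sigma_D^*\vol_\rho$ lifts to $D^*\vol_G$, which is the Riemannian volume form of the lifted metric. You leave one point under ``bookkeeping'' that the paper states outright. Because $G$ is connected, it acts on $X$ by orientation-preserving maps, so the orientation pulled back by $D$ is deck-invariant and descends to $M$.

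\textbf{Part (1).} Here you take a different route. The paper gives no argument and simply cites \cite[Proposition 3.1(1)]{DLSW}; you supply a self-contained proof.
\begin{enumerate}
\item The lift $\tilde f$ induces a bundle map $F\colon P_{\rho\circ f_*}\to P_\rho$ covering $f$. On each fiber, $F$ is a $G$-equivariant map between $G$-torsors, hence bijective, so $P_{\rho\circ f_*}\cong f^*P_\rho$.
\item Since $\tilde f\times\id$ commutes with $\mathrm{pr}_2$, we get $\bar F^*\vol_\rho=\vol_{\rho\circ f_*}$.
\item Computing $\Vol_G(M,\rho\circ f_*)$ with the pulled-back section reduces the claim to $\int_M f^*\eta=\deg(f)\int_N\eta$.
\end{enumerate}
Your equivariance check $\tilde f(\gamma x)=f_*(\gamma)\tilde f(x)$ is exactly what makes $F$ well defined. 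It holds once $\tilde f$ sends the base point of $\widetilde M$ to that of $\widetilde N$, consistently with the chosen $f_*$.

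\textbf{Comparison.} Your version costs a few lines but makes the lemma depend on \cite{DLSW} only through section-independence. The definition of $\Vol_G$ in the paper already invokes that fact, so nothing new is assumed. The paper's citation buys brevity.
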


\begin{proof}
	Part~\ref{item:rep_vol_naturality} is \cite[Proposition 3.1(1)]{DLSW}.
	For part~\ref{item:rep_vol_geometric_holonomy}, the equivariance of $D$ with respect to $\rho_\gm$ implies that $[\widetilde{m}] \mapsto [\widetilde{m}, D(\widetilde{m})]$ defines a section $\sigma_D \colon M \to P_{\rho_\gm}/K$.
	Since $G$ is connected, it acts on $X$ preserving the orientation, and the pullback of the orientation of $X$ under $D$ is a well-defined orientation of $M$.
	Hence $D$ is an orientation-preserving local isometry, and $\sigma_D^*\vol_\rho$ agrees with the Riemannian volume form of $g$; the assertion follows from \eqref{eq:representation_volume}.
\end{proof}

\subsection{The Chern--Simons invariant of the Levi-Civita connection}
\label{subsec:metric_cs}

In this subsection, we review the \CSname{} invariant $\CS(M,g)$ of the Levi-Civita connection determined by a Riemannian metric $g$.
This invariant was introduced in \cite{CS1}; note that it is a quantity on the frame bundle of $(M,g)$ rather than on the flat $G$-bundle $P_\rho$.

We write $\FrameBundle$ for the orthonormal frame bundle of a closed Riemannian $3$-manifold $(M,g)$ and $\omega_\LC$ for its Levi-Civita connection.
As is well known, $M$ is parallelizable and $\FrameBundle$ is a trivial $\SO(3)$-bundle.
Choosing an orthonormal frame $s \colon M \to \FrameBundle$, we define the \CSname{} invariant $\CS(M,g)$ of the Levi-Civita connection by
\begin{equation}
	\CS(M,g) \coloneqq
	-\frac{1}{16\pi^2}\int_M s^*\trace\left(\omega_\LC \wedge \diff\omega_\LC + \frac{2}{3}\omega_\LC \wedge \omega_\LC \wedge \omega_\LC\right) \pmod{\Z} .
	\label{eq:metric_cs_explicit}
\end{equation}
This is precisely the invariant $\int_M (1/2)\,s^*TP_1(\omega_\LC)$ given in \cite[\S6]{CS1}.
As we see in Remark~\ref{rmk:metric_cs_as_relative}, it does not depend on the choice of the frame $s$.

\begin{rmk}
	\label{rmk:metric_cs_as_relative}
	The \CSname{} invariant \eqref{eq:metric_cs_explicit} of the Levi-Civita connection can also be written in terms of the relative \CSname{} form of this paper.
	As the invariant polynomial we take half of the first Pontryagin polynomial $P_1(X,Y) = -\frac{1}{8\pi^2}\trace(XY)$ of \cite[\S6]{CS1}, that is, $\p \coloneqq (1/2)P_1$, and we write $\omega_\triv^s$ for the trivial connection associated with the frame $s$.
	Setting $A \coloneqq s^*\omega_\LC$, we have $\CS_\p(\omega_\LC,\omega_\triv^s) = \langle A,\diff A \rangle + (1/3)\langle A,[A,A] \rangle$ by \eqref{eq:cs_formula_flat_l2}, and therefore
	\[
		\CS(M,g) \equiv \CS(M;\omega_\LC,\omega_\triv^s) \equiv \int_M \CS_\p(\omega_\LC,\omega_\triv^s) \pmod{\Z}.
	\]
	Here $\Lambda_{P_1}(\SO(3)) = 2\Z$, because the standard basis of $\so(3)$ satisfies $\trace(L_iL_j) = -2\delta_{ij}$, and because $\Vol(\SO(3)) = 8\pi^2$ with respect to the metric making this basis orthonormal \cite[Equation 5.13]{CS1}; we take half of $P_1$ in order to normalize this to $\Lambda_\p(\SO(3)) = \Z$.
	Since $\FrameBundle$ is trivial, Lemma~\ref{lem:expanded_period} gives $\Lambda_\p(\FrameBundle) \subset \Z$, and the \CSname{} invariant of Definition~\ref{dfn:cs_inv_definition} is defined modulo $\Z$ and independent of the choice of the frame $s$.
\end{rmk}

In all our examples below, the metric is induced by a left-invariant metric on a Lie group.
We now express $\CS(M,g)$ in terms of the structure constants.

\begin{lem}
	\label{lem:metric_cs_left_invariant}
	Let $X$ be an oriented $3$-dimensional Lie group, let $g$ be a left-invariant metric, let $\Gamma\subset X$ be a discrete cocompact subgroup acting freely on $X$, and equip $M\coloneqq\Gamma\backslash X$ with the orientation induced from $X$.
	Here $(i,j,k)$ denotes a cyclic permutation of $(1,2,3)$, that is, one of $(1,2,3)$, $(2,3,1)$, and $(3,1,2)$.
	Suppose that $\Lie(X)$ admits a positively oriented orthonormal basis $(E_1,E_2,E_3)$ with $[E_i,E_j]=\lambda_k E_k$, and set $\mu_i\coloneqq(\lambda_1+\lambda_2+\lambda_3)/2-\lambda_i$.
	Then
	\begin{equation}
		\CS(M,g) \equiv -\frac{\lambda_1\lambda_2\lambda_3-4\mu_1\mu_2\mu_3}{8\pi^2} \, \Vol(M,g) \pmod{\Z}.
		\label{eq:metric_cs_left_invariant}
	\end{equation}
\end{lem}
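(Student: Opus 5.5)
The plan is to evaluate \eqref{eq:metric_cs_explicit} directly, using the global orthonormal frame descended from $(E_1,E_2,E_3)$. Since $g$ is left-invariant and $\Gamma$ acts by left translations, the left-invariant fields $E_i$ descend to an orthonormal frame $s$ of $M$, positively oriented by hypothesis. By Remark~\ref{rmk:metric_cs_as_relative}, $\CS(M,g)$ does not depend on the frame, so we may use this $s$. The connection form $A = s^*\omega_\LC$ is then left-invariant with constant coefficients, and the integrand is a constant multiple of the Riemannian volume form $e^1\wedge e^2\wedge e^3$, where $(e^i)$ is the dual coframe. Integrating over $M$ therefore produces the factor $\Vol(M,g)$.

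First I compute the Levi-Civita connection by the Koszul formula for left-invariant fields:
\[
2g(\nabla_X Y,Z)=g([X,Y],Z)-g([Y,Z],X)+g([Z,X],Y).
\]
For the basis with $[E_i,E_j]=\lambda_kE_k$ this gives $\nabla_{E_i}E_j=\mu_i E_k$ for each cyclic $(i,j,k)$. One checks the sign and the choice of $\mu$ index on a single case, e.g.\ $2g(\nabla_{E_1}E_2,E_3)=\lambda_3-\lambda_1+\lambda_2=2\mu_1$. Correspondingly $\nabla_{E_i}E_k=-\mu_iE_j$ and $\nabla_{E_i}E_i=0$. Hence the $\so(3)$-valued form is $A=\sum_i \mu_i e^i\otimes L_i$, up to a sign fixed by the convention $\nabla E_j = \sum_l A_{lj}E_l$. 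Here $L_i$ is the standard basis of $\so(3)$ satisfying $[L_i,L_j]=L_k$ and $\trace(L_iL_j)=-2\delta_{ij}$.

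Next I insert this into $\trace(A\wedge \diff A+\tfrac23 A\wedge A\wedge A)$. Using $\diff e^k=-\lambda_k e^i\wedge e^j$ (from $\diff e^k(E_i,E_j)=-e^k([E_i,E_j])$), we get $\diff A=-\sum_k\mu_k\lambda_k\, e^i\wedge e^j\otimes L_k$, so
\[
\trace(A\wedge\diff A)=2\Big(\sum_k\mu_k^2\lambda_k\Big)\,e^{123}.
\]
The cubic term is
\[
\trace(A\wedge A\wedge A)=\mu_1\mu_2\mu_3\sum_{\sigma}\operatorname{sgn}(\sigma)\trace(L_{\sigma1}L_{\sigma2}L_{\sigma3})\,e^{123}=3\mu_1\mu_2\mu_3\,\trace(L_1[L_2,L_3])\,e^{123}=-6\mu_1\mu_2\mu_3\,e^{123}.
\]
The prefactor $-\tfrac1{16\pi^2}$ then gives
\[
\CS(M,g)\equiv-\tfrac{1}{8\pi^2}\Big(\sum_k\lambda_k\mu_k^2-2\mu_1\mu_2\mu_3\Big)\Vol(M,g).
\]

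The last step is the algebraic identity $\sum_k\lambda_k\mu_k^2-2\mu_1\mu_2\mu_3=\lambda_1\lambda_2\lambda_3-4\mu_1\mu_2\mu_3$, i.e.\ $\sum\lambda_k\mu_k^2+2\mu_1\mu_2\mu_3=\lambda_1\lambda_2\lambda_3$. To verify it, substitute $\lambda_k=\mu_i+\mu_j$ (inverting the definition of $\mu$). Then $\sum_k(\mu_i+\mu_j)\mu_k^2+2\mu_1\mu_2\mu_3$ equals $(\mu_1+\mu_2)(\mu_2+\mu_3)(\mu_3+\mu_1)$, since both sides are the sum of all monomials $\mu_a^2\mu_b$ ($a\ne b$) plus $2\mu_1\mu_2\mu_3$.

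The main obstacle is sign bookkeeping: the orientation of $A$ as a matrix (row versus column convention), the sign of $\diff e^k$, and the identification of $\omega_\LC$ with $A$. A wrong choice flips the sign of the quadratic term relative to the cubic term, and the identity above then fails. I would fix all conventions by checking against a known case: $\lambda_i=1$ for $\SO(3)\cong\mathbb{RP}^3$-type round metrics, or the unimodular flat case $\lambda=(0,0,0)$ giving $0$.
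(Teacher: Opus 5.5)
Your proposal is correct and follows the same route as the paper: descend the left-invariant frame, obtain $\nabla_{E_i}E_j=\mu_iE_k$ from the Koszul formula, use $\diff e^k=-\lambda_k e^i\wedge e^j$, and finish with the identity $\sum_k\lambda_k\mu_k^2+2\mu_1\mu_2\mu_3=\lambda_1\lambda_2\lambda_3$. The only difference is cosmetic: you expand $A=\sum_i\mu_i e^iL_i$ in the basis $L_i$, whereas the paper works with the entries $\omega_{23}=-\mu_1e^1$, $\omega_{31}=-\mu_2e^2$, $\omega_{12}=-\mu_3e^3$. Your sign worry is already settled, since the convention $\nabla E_j=\sum_l A_{lj}E_l$ (the paper's) together with $[L_i,L_j]=L_k$ gives exactly the plus sign you used.
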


\begin{proof}
	The left-invariant frame $(E_1,E_2,E_3)$ determines a positively oriented orthonormal frame $s$ on $M$, and the dual coframe $e^i \in \mathcal{A}^1(M;\R)$ satisfies $\vol_g=e^1\wedge e^2\wedge e^3$ and $\diff e^i=-\lambda_i\,e^j\wedge e^k$.
	The Koszul formula gives $\nabla_{E_i}E_j=\mu_iE_k$, so that, writing $A\coloneqq s^*\omega_\LC$ as a $3\times3$ matrix in $\so(3)$ with entries $\omega_{ab}$ determined by $\nabla E_b=\sum_a \omega_{ab}\,E_a$, we have
	\[
		\omega_{23}=-\mu_1\,e^1,\quad \omega_{31}=-\mu_2\,e^2,\quad \omega_{12}=-\mu_3\,e^3 .
	\]
	The skew-symmetry of $A$ and the expressions above yield
	\begin{align*}
		\trace(A\wedge\diff A+\frac{2}{3}A\wedge A\wedge A)
		&= -2\sum_{a<b}\omega_{ab}\wedge\diff\omega_{ab} + 4\,\omega_{12}\wedge\omega_{23}\wedge\omega_{31} \\
		&= -2\sum_i \mu_i^2\,e^i\wedge\diff e^i - 4\mu_1\mu_2\mu_3\,\vol_g \\
		&= (2\sum_i\mu_i^2\lambda_i-4\mu_1\mu_2\mu_3)\vol_g \\
		&= 2(\lambda_1\lambda_2\lambda_3-4\mu_1\mu_2\mu_3)\,\vol_g ,
	\end{align*}
	where the last equality follows from $\lambda_i=\mu_j+\mu_k$ and $\sum_i\mu_i^2\lambda_i=\lambda_1\lambda_2\lambda_3-2\mu_1\mu_2\mu_3$.
	Substituting this into \eqref{eq:metric_cs_explicit} gives \eqref{eq:metric_cs_left_invariant}.
\end{proof}

\begin{rmk}
	\label{rmk:unimodular_case}
	The three geometries $\Nil$, $\Sol$, and $\E^3$ treated in this paper are all $3$-dimensional unimodular Lie groups, and a unimodular Lie group always admits an orthonormal basis satisfying the hypothesis of Lemma~\ref{lem:metric_cs_left_invariant} \cite[pp.~305--307]{Milnor}; the lemma therefore applies to each of them.
\end{rmk}

Finally, we record without proof the relation between $\CS(M,g)$ and the eta invariant.
The Atiyah--Patodi--Singer index theorem yields the relation
\begin{equation}
	\CS(M,g) \equiv \frac{3}{2}\eta(M,g) + \frac{1}{2}\sigma_2(M) \pmod{\Z}
	\label{eq:aps_relation}
\end{equation}
for a closed Riemannian $3$-manifold \cite[Theorem 4.1]{Ouyang}; the original source is \cite[Proposition 4.19]{APS2}.
Here $\eta(M,g)$ is the eta invariant of the signature operator, and $\sigma_2(M)$ is the number of $2$-primary summands in $H_1(M;\Z)$.
We use \eqref{eq:aps_relation} in Section~\ref{subsec:euclid_examples} to determine $\CS(M,g)$ for flat manifolds.

\section{Chern--Simons invariants in \texorpdfstring{$\Nil$}{Nil}-geometry}
\label{sec:Nil}

In this section, we show that the volume-type \CSname{} invariant in $\Nil$-geometry equals the volume of the representation (Theorem~\ref{thm:cs_inv_is_vol_nil}).
We then examine its relation to the \CSname{} invariant of the Levi-Civita connection.

\subsection{Preliminaries on \texorpdfstring{$\Nil$}{Nil}-geometry}
\label{subsec:nil_prep}

Identify $\Nil$ with $\R^2\times\R$, assigning to $(x,y,z)$ the upper triangular matrix
\begin{equation}
    \begin{pmatrix} 1 & x & z \\ 0 & 1 & y \\ 0 & 0 & 1 \end{pmatrix}
    \label{eq:nil_matrix}
\end{equation}
as in \cite[\S4]{Scott}.
The multiplication is then matrix multiplication, so that $\Nil$ is the Heisenberg group.
We write $X,Y,Z$ for the left-invariant frame in these coordinates, and call the left-invariant metric making them orthonormal the \emph{standard metric} on $\Nil$; in coordinates,
\begin{equation}
    g_{\Nil} = \diff x^2+\diff y^2+(\diff z-x\,\diff y)^2 .
    \label{eq:nil_metric}
\end{equation}

Throughout this section, $G=\Isom_e\Nil \cong \Nil \rtimes \SO(2)$ and $K=\SO(2)$ \cite[Theorem 3.2]{HaLee}.
Each $A\in K$ acts on $\Nil$ isometrically as an automorphism; for the element $A=-I$ used in this paper, this action is $(x,y,z)\mapsto(-x,-y,z)$.
The Lie algebra of $G$ is the semidirect product $\mathfrak{g}=\mathfrak{nil}\rtimes\so(2)$ of the Heisenberg algebra $\mathfrak{nil}$ and the algebra $\so(2)$ of rotations.
Regarding $X,Y,Z$ above as a basis of $\mathfrak{nil}$ and adjoining a generator $R$ of $\so(2)$, we obtain a basis $\{X, Y, Z, R\}$ of $\mathfrak{g}$ with non-trivial brackets
\[
    [X,Y] = Z, \quad [R, X] = Y, \quad [R,Y] = -X.
\]

Since $G$ is connected, a symmetric bilinear form $\p=\langle\cdot,\cdot\rangle$ on $\g$ is $\Ad(G)$-invariant if and only if
\begin{equation}
    \langle [u,v],w\rangle + \langle v,[u,w]\rangle = 0 \quad (u,v,w\in\g).
    \label{eq:ad_invariance}
\end{equation}
Solving \eqref{eq:ad_invariance} for the basis $\{X,Y,Z,R\}$, we find that $\p$ is $G$-invariant if and only if all its components vanish except
\begin{equation}
    \langle X,X\rangle=\langle Y,Y\rangle=\langle Z,R\rangle=c_1, \quad \langle R,R\rangle=c_2
    \label{eq:inv_poly_nil}
\end{equation}
for some $c_1,c_2\in\R$.
This computation, and those in Sections~\ref{sec:Sol} and \ref{sec:Euclid}, can also be carried out with the program \cite{InvPolynomial}.
We denote this polynomial by $\p=\p_{c_1,c_2}$.

We begin by recording the flatness conditions.
We write a connection $\omega$ on $P_\rho$ in components as
$\omega = \omega_X X + \omega_Y Y + \omega_Z Z + \omega_R R \in \mathcal{A}^1(P_\rho;\g)$.
A connection $\omega$ is flat if and only if $\diff \omega + (1/2)[\omega,\omega] = 0$.
Using the Lie brackets, this condition is equivalent to
\begin{equation}
    \label{eq:flat_conditions_nil}
    \begin{aligned}
        \diff \omega_X &= -\omega_Y \wedge \omega_R, &\quad
        \diff \omega_Y &= -\omega_R \wedge \omega_X, \\
        \diff \omega_Z &= -\omega_X \wedge \omega_Y, &\quad
        \diff \omega_R &= 0.
    \end{aligned}
\end{equation}

We first observe that the pair consisting of a flat connection and a trivial connection yields no non-trivial invariant.

\begin{prop}
    \label{prop:flat_and_trivial_connection_cs}
    Suppose that $P_\rho$ is trivial, and let $\omega_\triv^s$ be the trivial connection determined by a section $s \colon M \to P_\rho$.
    For any flat connection $\omega_\flat$, write $a \coloneqq \omega_\flat - \omega_\triv^s = a_X X + a_Y Y + a_Z Z + a_R R$ with respect to the trivialization given by $s$.
    Then
    \begin{equation}
        \CS_\p(\omega_\flat,\omega_\triv^s) = c_1\,\diff(a_Z \wedge a_R) \in \mathcal{A}^3(M;\R).
        \label{eq:cs_flat_triv_nil}
    \end{equation}
\end{prop}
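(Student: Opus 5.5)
Since both $\omega_\flat$ and $\omega_\triv^s$ are flat, I will start from \eqref{eq:cs_formula_flat_flat}: $\CS_\p(\omega_\flat,\omega_\triv^s) = -\frac{1}{6}\langle a,[a,a]\rangle$. In the trivialization given by $s$, the form $a$ is just $s^*\omega_\flat$, a $\g$-valued $1$-form on $M$. The Maurer--Cartan equation $\diff a + \frac12[a,a]=0$ then reads exactly as the component equations \eqref{eq:flat_conditions_nil} with $\omega$ replaced by $a$:
\[
\diff a_X = -a_Y\wedge a_R,\qquad \diff a_Y=-a_R\wedge a_X,\qquad \diff a_Z=-a_X\wedge a_Y,\qquad \diff a_R=0 .
\]

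Next I will compute $[a,a]$ from the brackets. Using $[a,a](u,v)=2[a(u),a(v)]$ and the non-trivial brackets $[X,Y]=Z$, $[R,X]=Y$, $[R,Y]=-X$, I expect
\[
\tfrac12[a,a] = a_X\wedge a_Y\, Z + a_R\wedge a_X\, Y - a_R\wedge a_Y\, X .
\]
The signs are consistent with the flatness equations above. I then pair this with $a$ using the only non-zero components \eqref{eq:inv_poly_nil} of $\p_{c_1,c_2}$, namely $\langle X,X\rangle=\langle Y,Y\rangle=\langle Z,R\rangle=c_1$ and $\langle R,R\rangle=c_2$. Since $\frac12[a,a]$ has no $R$-component, $c_2$ never appears. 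The $X$-, $Y$- and $R$-components of $a$ pair respectively with the $X$-, $Y$- and $Z$-components of $\frac12[a,a]$, giving
\[
\tfrac12\langle a,[a,a]\rangle = c_1\bigl(-a_X\wedge a_R\wedge a_Y + a_Y\wedge a_R\wedge a_X + a_R\wedge a_X\wedge a_Y\bigr) = 3c_1\, a_X\wedge a_Y\wedge a_R .
\]
Hence $\CS_\p(\omega_\flat,\omega_\triv^s) = -c_1\, a_X\wedge a_Y\wedge a_R$.

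Finally, the flatness equations give
\[
\diff(a_Z\wedge a_R) = \diff a_Z\wedge a_R - a_Z\wedge \diff a_R = -a_X\wedge a_Y\wedge a_R ,
\]
so that $\CS_\p(\omega_\flat,\omega_\triv^s) = c_1\,\diff(a_Z\wedge a_R)$, which is \eqref{eq:cs_flat_triv_nil}.

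The main obstacle is sign bookkeeping. This involves the factor convention for $[a,a]$, the graded reordering of the wedge products of three $1$-forms, and checking that the three terms add rather than cancel. I will cross-check the result against the intermediate form of \eqref{eq:cs_formula_flat_l2}, namely $\langle a,\diff a\rangle+\frac13\langle a,[a,a]\rangle$, to confirm the coefficient $-c_1$.
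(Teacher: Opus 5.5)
Your proposal is correct and follows the same route as the paper. You reduce to $-\frac{1}{6}\langle a,[a,a]\rangle$ using the Maurer--Cartan equation in the trivialization, note that $[a,a]$ has no $R$-component so only the three $c_1$-pairings survive to give $-c_1\,a_X\wedge a_Y\wedge a_R$, and identify this with $c_1\,\diff(a_Z\wedge a_R)$ via $\diff a_Z=-a_X\wedge a_Y$ and $\diff a_R=0$; your sign bookkeeping, including $-a_R\wedge a_Y=a_Y\wedge a_R$ for the $X$-component, checks out.
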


\begin{proof}
    Both connections are flat, so \eqref{eq:cs_formula_flat_flat} applies; moreover, the Maurer--Cartan equation $\diff_{\omega_\triv^s}a+(1/2)[a,a]=0$ used to derive that formula also holds.
    Under the trivialization given by $s$ we have $\diff_{\omega_\triv^s}=\diff$, so this gives, for the components of $a$, the same system of equations as \eqref{eq:flat_conditions_nil}.
    A direct computation first gives
    \[
        [a,a] = 2a_Y \wedge a_R\,X + 2a_R \wedge a_X\,Y + 2 a_X \wedge a_Y\,Z ,
    \]
    which has no $R$-component.
    Hence, among the components listed in \eqref{eq:inv_poly_nil}, the component $\langle R,R \rangle = c_2$ does not contribute; nor does $a_Z$, since $\langle Z,R \rangle$ is the only possibly non-zero component of the form $\langle Z,\cdot \rangle$.
    The remaining pairings with $a_X X$, $a_Y Y$, and $a_R R$ are all equal to $2c_1\,a_X \wedge a_Y \wedge a_R$ because $\langle X,X \rangle = \langle Y,Y \rangle = \langle Z,R \rangle = c_1$, and therefore
    \[
        \CS_\p(\omega_\flat,\omega_\triv^s) = -c_1\,a_X \wedge a_Y \wedge a_R .
    \]
    On the other hand, the system above gives $\diff a_Z = -a_X \wedge a_Y$ and $\diff a_R = 0$, so the Leibniz rule yields
    \[
        \diff(a_Z \wedge a_R) = \diff a_Z \wedge a_R = -a_X \wedge a_Y \wedge a_R ,
    \]
    and \eqref{eq:cs_flat_triv_nil} follows.
\end{proof}

Since $M$ is closed, Stokes' theorem shows that the integral of $\CS_\p(\omega_\flat,\omega_\triv^s)$ vanishes.
To obtain a non-trivial invariant we therefore need an auxiliary connection, and in Section~\ref{subsec:nil_connection} we construct such a connection $\omega^\sigma$.

Next, we check that the period group \eqref{eq:gauge_period_P} vanishes.
Since $G/K \cong \Nil$ is contractible, we fix a section $\sigma \colon M \to P_\rho/K$ and a reduced sub-$K$-bundle $\pi_\sigma \colon Q_\sigma \to M$, as in Section~\ref{subsec:CSinv}.
Since $\ad_R$ annihilates $Z$ and $R$ and $K$ is connected, $\Ad(K)$ preserves the subspace spanned by $X$ and $Y$, the line $\R Z$, and the line $\R R$, and acts trivially on the $Z$- and $R$-components.
Evaluating the vertical and equivariance conditions componentwise, we see that for any $B=bR\in\kk$, the components of $\omega_\rho$ satisfy $\omega_\rho^R(B^*) = b$ and $\omega_\rho^Z(B^*) = 0$.
As $\kk$ is abelian, the equivariance condition is equivalent to right invariance, and the pullback of the $R$-component to $Q_\sigma$ defines a connection on the principal $K$-bundle $Q_\sigma$.
This connection is flat by \eqref{eq:flat_conditions_nil}, so Chern--Weil theory shows that the real Euler class of $Q_\sigma$ vanishes in $H^2(M;\R)$, and Corollary~\ref{cor:period_is_zero_criteria}\eqref{item:period_zero_euler} gives $\Lambda_\p(P_\rho) = 0$.

\subsection{Construction of the volume-type Chern--Simons invariant}
\label{subsec:nil_connection}

In this subsection, we explicitly construct an auxiliary connection $\omega^\sigma$ on $P_\rho$ using the section $\sigma$ and the reduced sub-$K$-bundle $Q_\sigma$, and show that it is well-defined.

For the pullback to $Q_\sigma$ of the flat connection $\omega_\rho \in \mathcal{A}^1(P_\rho; \g)$, set
\[
    e^1\coloneqq\iota_\sigma^*\omega_\rho^X,\quad e^2\coloneqq\iota_\sigma^*\omega_\rho^Y,\quad e^3\coloneqq\iota_\sigma^*\omega_\rho^Z,\quad \theta\coloneqq\iota_\sigma^*\omega_\rho^R \in \mathcal{A}^1(Q_\sigma;\R) .
\]
That is, we expand $\iota_\sigma^*\omega_\rho$ in components according to the direct sum decomposition $\g = \nil \oplus \kk$ of vector spaces:
\begin{equation}
    \label{eq:flat_connection_components}
    \iota_\sigma^* \omega_\rho = e^1 X + e^2 Y + e^3 Z + \theta R \in \mathcal{A}^1(Q_\sigma;\g) .
\end{equation}
We use this notation throughout this subsection.
As we saw in the previous subsection, $\theta R$ is a connection on the principal $K$-bundle $Q_\sigma$.
Using it, we construct the connection $(\theta-e^3)R$ on the reduced subbundle $Q_\sigma$.

\begin{prop}
    \label{prop:omega_is_well_defined_connection}
    The form $(\theta-e^3)R \in \mathcal{A}^1(Q_\sigma;\kk)$ is a connection on the principal $K$-bundle $Q_\sigma$.
    Consequently, its unique extension $\omega^\sigma$ is a connection on the principal $G$-bundle $P_\rho$.
\end{prop}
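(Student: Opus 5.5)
To show that $(\theta-e^3)R$ is a connection on $Q_\sigma$, we check the two defining conditions: it reproduces fundamental vector fields, and it is $\Ad(K)$-equivariant under right translation. Since $K=\SO(2)$ is abelian, equivariance is the same as right invariance, $R_k^*\big((\theta-e^3)R\big)=(\theta-e^3)R$ for all $k\in K$. The earlier observation that $\theta R$ is a connection on $Q_\sigma$ already handles $\theta$. It therefore suffices to show that $e^3$ annihilates vertical vectors of $Q_\sigma$ and is right invariant on $Q_\sigma$. Equivalently, $e^3$ is a basic (tensorial, $K$-invariant) $1$-form, i.e.\ the pullback of a $1$-form on $M$.

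\emph{Verticality.} The vertical vectors of $Q_\sigma$ are the fields $B^*$ with $B=bR\in\kk$. These are also fundamental vector fields of $P_\rho$, tangent to $Q_\sigma$. Since $\omega_\rho(B^*)=B=bR$, reading off components gives $\omega_\rho^Z(B^*)=0$, as recorded in Section~\ref{subsec:nil_prep}. Hence $e^3(B^*)=0$ and $(\theta-e^3)R(B^*)=bR=B$.

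\emph{Invariance.} For $k\in K$ we have $R_k^*\omega_\rho=\Ad(k^{-1})\omega_\rho$ on $P_\rho$, and $R_k$ preserves $Q_\sigma$, so the same identity holds for $\iota_\sigma^*\omega_\rho$. Now $\ad_R$ kills $Z$ and $R$ and maps $\mathrm{span}\{X,Y\}$ into itself. Because $K$ is connected, $\Ad(k^{-1})=\exp(-t\,\ad_R)$ fixes $Z$ and $R$ and preserves $\mathrm{span}\{X,Y\}$. Comparing $Z$- and $R$-components in \eqref{eq:flat_connection_components} gives $R_k^*e^3=e^3$ and $R_k^*\theta=\theta$. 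Thus $(\theta-e^3)R$ is right invariant, and hence a connection on $Q_\sigma$.

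\emph{Extension.} For the second claim we use the standard extension of a connection on a reduced subbundle. Every point of $P_\rho$ has the form $u g$ with $u\in Q_\sigma$ and $g\in G$. Define $\omega^\sigma$ at $ug$ by pulling back along the bundle isomorphism $Q_\sigma\times_K G\cong P_\rho$, with the formula $\omega^\sigma=\Ad(g^{-1})\,\mathrm{pr}_1^*\omega_K+\mathrm{pr}_2^*\omega_\MC$ on $Q_\sigma\times G$. This form descends to the quotient by $K$ because $\omega_K$ is a $K$-connection. The only point needing care is that this descent is well defined; it is the standard fact that a connection on a reduction extends uniquely to a connection on the $G$-bundle, since $G$-equivariance and the prescribed values on $B^*$ determine $\omega^\sigma$ off $Q_\sigma$. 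Uniqueness then follows: any two $G$-connections agreeing on $TQ_\sigma$ agree on all of $TP_\rho$, because $T_uP_\rho=T_uQ_\sigma+\{B^*_u : B\in\g\}$ and both are equivariant. I expect no real obstacle. The one substantive point is the vanishing of $\omega_\rho^Z$ on $\kk$-vertical vectors, which is already established.
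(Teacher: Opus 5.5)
Your proposal is correct and follows essentially the same route as the paper: you show that $e^3$ is horizontal and right-invariant on $Q_\sigma$ (using $\omega_\rho^Z(B^*)=0$ and the fact that $\Ad(K)$ fixes $Z$ and $R$), hence basic, so subtracting $e^3R$ from the connection $\theta R$ again gives a $K$-connection. You then invoke the standard unique extension of a connection from a reduced subbundle, which the paper cites as Kobayashi--Nomizu, Chapter II, Proposition 6.1; your formula on $Q_\sigma\times G$ and the uniqueness argument via $T_uP_\rho=T_uQ_\sigma+\{B^*_u : B\in\g\}$ simply spell out what that citation provides.
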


\begin{proof}
    As we saw in the previous subsection, $e^3$ satisfies $e^3(B^*)=0$ and is invariant under the right action of $K$, and hence is the pullback of a $1$-form on $M$.
    Since $\kk$ is abelian, the difference of two connections on the principal $K$-bundle is precisely a $\kk$-valued $1$-form pulled back from the base space; conversely, adding such a form to a connection again yields a connection.
    Hence $(\theta-e^3)R = \theta R - e^3 R$ is a connection on the principal $K$-bundle $Q_\sigma$.
    By the theorem on the unique extension of a connection in a reduced subbundle \cite[Chapter II, Proposition 6.1]{KN}, its unique extension $\omega^\sigma$ is a connection on the principal $G$-bundle.
\end{proof}

Next, we show that the \CSname{} form determined by $\omega^\sigma$ and the flat connection $\omega_\rho$ agrees with the volume form of the representation.
For this we first record an expression for that volume form on $Q_\sigma$.
We normalize the $G$-invariant volume form $\vol_G$ on $G/K\cong \Nil$ such that $\vol_G = X^*\wedge Y^*\wedge Z^*$ at the identity.
By the definition of $\vol_\rho$, the $\Nil$-components $e^1, e^2, e^3$ of $\iota_\sigma^*\omega_\rho$ satisfy
\begin{equation}
    \pi_\sigma^*(\sigma^*\vol_\rho) = e^1\wedge e^2\wedge e^3 \in \mathcal{A}^3(Q_\sigma;\R).
    \label{eq:volume_form_components_nil}
\end{equation}

\begin{prop}
    \label{prop:nil_cs_equals_volume}
    For the connection $\omega^\sigma$ associated with any section $\sigma \colon M \to P_\rho/K$, we have
    \begin{equation}
        \CS_\p(\omega^\sigma,\omega_\rho) = -c_2\,\sigma^*\vol_\rho \in \mathcal{A}^3(M;\R).
        \label{eq:cs_vol_form_nil}
    \end{equation}
\end{prop}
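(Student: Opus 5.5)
The plan is to compute both sides after pulling back along $\pi_\sigma \colon Q_\sigma \to M$. Since $\pi_\sigma$ is a surjective submersion, $\pi_\sigma^*$ is injective on differential forms, so it suffices to prove
\[
\pi_\sigma^*\CS_\p(\omega^\sigma,\omega_\rho) = -c_2\,e^1\wedge e^2\wedge e^3
\]
on $Q_\sigma$, using \eqref{eq:volume_form_components_nil} for the right-hand side. By naturality \eqref{eq:cs_naturality} applied to the bundle map $\iota_\sigma$, together with the definition of $\omega^\sigma$ as the extension of $(\theta-e^3)R$, the difference $a=\omega^\sigma-\omega_\rho$ pulls back to
\[
\iota_\sigma^* a = -\bigl(e^1X + e^2Y + e^3(Z+R)\bigr).
\]
The key observation is that the $\theta R$ parts cancel, so that $a$ is expressed purely in terms of the $\nil$-components $e^1,e^2,e^3$.

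Since $\omega_\rho$ is flat, I will use \eqref{eq:cs_formula_flat_l2}:
\[
\CS_\p = \langle a,\diff_{\omega_\rho}a\rangle + \tfrac13\langle a,[a,a]\rangle .
\]
The brackets needed are $[X,Y]=Z$, $[Z+R,X]=Y$, and $[Z+R,Y]=-X$. From these,
\[
[a,a]=2e^2\wedge e^3\,X+2e^3\wedge e^1\,Y+2e^1\wedge e^2\,Z .
\]
Pairing with $a$ via \eqref{eq:inv_poly_nil}, and using $\langle X,X\rangle=\langle Y,Y\rangle=\langle R,Z\rangle=c_1$ and $\langle Z,Z\rangle=0$, each of the three terms contributes $-2c_1\,e^1\wedge e^2\wedge e^3$. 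Hence
\[
\tfrac13\langle a,[a,a]\rangle=-2c_1\,e^1\wedge e^2\wedge e^3 .
\]

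Next I compute $\diff_{\omega_\rho}a=\diff a+[\omega_\rho,a]$ componentwise. For the derivatives I use the flatness relations \eqref{eq:flat_conditions_nil}, pulled back to $Q_\sigma$:
\[
\diff e^1=-e^2\wedge\theta,\quad \diff e^2=-\theta\wedge e^1,\quad \diff e^3=-e^1\wedge e^2,\quad \diff\theta=0 .
\]
For $[\omega_\rho,a]$ I use $\omega_\rho = e^1X+e^2Y+e^3Z+\theta R$. I expect all terms involving $\theta$ to cancel between $\diff a$ and $[\theta R,a]$, because $a$ is $\Ad(K)$-equivariant and $\ad_R$ preserves the pairing. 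Among the remaining terms, the $\langle R,R\rangle$ contribution is
\[
c_2\,e^3\wedge\diff e^3=-c_2\,e^1\wedge e^2\wedge e^3 .
\]
The $c_1$-contributions, namely the $X$, $Y$ pairings and the cross pairings between $Z$ and $R$, should sum to $+2c_1\,e^1\wedge e^2\wedge e^3$. This exactly cancels the cubic term and leaves $-c_2\,e^1\wedge e^2\wedge e^3$, as claimed.

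The main obstacle is the bookkeeping of signs in $\langle a,\diff_{\omega_\rho}a\rangle$. This includes the factor conventions ($[a,a](u,v)=2[a(u),a(v)]$ and the wedge extension of $\langle\cdot,\cdot\rangle$), and verifying that the $\theta$-terms genuinely cancel. If the direct computation becomes messy, I would first use the identity $\diff_{\omega_\rho}a=\diff_{\omega^\sigma}a-[a,a]$. Here $\omega^\sigma$ restricted to $Q_\sigma$ is a $\kk$-valued connection, so $\diff_{\omega^\sigma}$ only involves rotations by $(\theta-e^3)$. That makes the $c_1$-part reduce to the already computed $\langle a,[a,a]\rangle$ plus terms killed by $\ad$-invariance \eqref{eq:ad_invariance}.
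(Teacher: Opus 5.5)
Your proposal is correct and follows the paper's proof. Both restrict to $Q_\sigma$, write $\iota_\sigma^*a=-(e^1X+e^2Y+e^3(Z+R))$, compute $[a,a]$ and the pairings from \eqref{eq:inv_poly_nil}, and conclude by injectivity of $\pi_\sigma^*$; your values $\langle a,\diff_{\omega_\rho}a\rangle=(2c_1-c_2)\,e^1\wedge e^2\wedge e^3$ and $\frac{1}{3}\langle a,[a,a]\rangle=-2c_1\,e^1\wedge e^2\wedge e^3$ check out.

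The paper handles the middle step differently: it writes $\iota_\sigma^*\diff_{\omega_\rho}a=\iota_\sigma^*\Omega^\sigma-\frac{1}{2}[a,a]$ with $\iota_\sigma^*\Omega^\sigma=(e^1\wedge e^2)R$, which is immediate since $\kk$ is abelian and $\diff\theta=0$, so your $\theta$-cancellation comes for free. That cancellation is forced by the flatness relations for $\diff e^1,\diff e^2$ (equivalently, by horizontality of $\diff_{\omega_\rho}a$), not by $\ad$-invariance of the pairing; indeed $\langle a,[\theta R,a]\rangle$ on its own is in general nonzero.
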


\begin{proof}
    We compute the \CSname{} form on $Q_\sigma$.
    Since $\kk$ is abelian and the restriction of \eqref{eq:flat_conditions_nil} to $Q_\sigma$ gives $\diff e^3=-e^1\wedge e^2$ and $\diff\theta=0$, the pullback to $Q_\sigma$ of the curvature $\Omega^\sigma$ of $\omega^\sigma$ is given by
    \begin{equation}
        \label{eq:curvature_omega_nil}
        \iota_\sigma^* \Omega^\sigma = (e^1 \wedge e^2) R \in \mathcal{A}^2(Q_\sigma;\kk).
    \end{equation}
    Set $a \coloneqq \iota_\sigma^*(\omega^\sigma - \omega_\rho)$.
    The definition of $\omega^\sigma$ and the flatness of $\omega_\rho$ give
    \[
        a = -e^1X -e^2Y -e^3Z -e^3R, \quad
        \iota_\sigma^*(\diff_{\omega_\rho}(\omega^\sigma-\omega_\rho)) = \iota_\sigma^*\Omega^\sigma - \frac{1}{2} [a,a] \, .
    \]
    Substituting these into \eqref{eq:cs_formula_general_l2}, we obtain
    \[
        \iota_\sigma^*\pi^*\CS_\p(\omega^\sigma,\omega_\rho)
        = \langle a,\iota_\sigma^*\Omega^\sigma\rangle - \frac{1}{6}\langle a,[a,a]\rangle ,
    \]
    where the brackets give $[a,a] = 2e^2\wedge e^3\,X + 2e^3\wedge e^1\,Y + 2e^1\wedge e^2\,Z$.
    By the components listed in \eqref{eq:inv_poly_nil}, only $\langle Z,R\rangle=c_1$ and $\langle R,R\rangle=c_2$ contribute to the first term, paired with the expression \eqref{eq:curvature_omega_nil} for the curvature, and that term equals $-(c_1+c_2)\,e^1\wedge e^2\wedge e^3$.
    In the second term, $[a,a]$ has no $R$-component and $\langle Z,R\rangle$ is the only possibly non-zero component of the form $\langle Z,\cdot\rangle$, so the $Z$-component of $a$ does not contribute; the remaining three pairings each give $-2c_1\,e^1\wedge e^2\wedge e^3$, whence
    \[
        \langle a,\iota_\sigma^*\Omega^\sigma\rangle = -(c_1+c_2)\,e^1\wedge e^2\wedge e^3, \quad
        \frac{1}{6}\langle a,[a,a]\rangle = -c_1\,e^1\wedge e^2\wedge e^3.
    \]
    Hence
    \[
        \iota_\sigma^*\pi^*\CS_\p(\omega^\sigma,\omega_\rho) = -c_2\,e^1\wedge e^2\wedge e^3,
    \]
    and the assertion follows from \eqref{eq:volume_form_components_nil} together with the injectivity of the pullback $\pi_\sigma^*$ along a surjective submersion.
\end{proof}

We now state the main theorem of this section.

\begin{thm}
    \label{thm:cs_inv_is_vol_nil}
    Let $\rho \colon \pi_1(M) \to G$ be a representation, and normalize the invariant polynomial with $c_2 = -1$.
    Then the \CSname{} invariant determined by $\omega^\sigma$ and $\omega_\rho$ coincides with the volume of the representation:
    \begin{equation}
    \label{eq:cs_inv_vol_nil}
        \CS(M;\omega^\sigma,\omega_\rho) = \Vol_G(M,\rho) \in \R .
    \end{equation}
    This value is independent of the choice of the section $\sigma$ and invariant under conjugation of $\rho$.
\end{thm}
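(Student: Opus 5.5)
The plan is to combine Proposition~\ref{prop:nil_cs_equals_volume}, the vanishing of the period group established at the end of Section~\ref{subsec:nil_prep}, and the properties of the volume of representations from Section~\ref{subsec:representation_volume}.

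\emph{Step 1: the invariant is $\R$-valued.} The period group satisfies $\Lambda_\p(P_\rho)=0$. This was shown via Corollary~\ref{cor:period_is_zero_criteria}\eqref{item:period_zero_euler}: the $R$-component of $\omega_\rho$ restricts to a flat connection on $Q_\sigma$, so the real Euler class vanishes. This argument does not depend on the values of $c_1,c_2$, since in case~\eqref{item:period_zero_euler} the condition $[\WZ_\p]=0$ holds automatically because $H^3(G;\R)=0$. Hence, by Definition~\ref{dfn:cs_inv_definition}, $\CS(M;\omega^\sigma,\omega_\rho)=\int_M\CS_\p(\omega^\sigma,\omega_\rho)$ exactly, as a real number.

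\emph{Step 2: identification with the volume.} Setting $c_2=-1$ in \eqref{eq:cs_vol_form_nil} gives $\CS_\p(\omega^\sigma,\omega_\rho)=\sigma^*\vol_\rho$ pointwise on $M$, for any value of $c_1$. Integrating over $M$ and comparing with \eqref{eq:representation_volume} yields \eqref{eq:cs_inv_vol_nil}.

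\emph{Step 3: independence and conjugation invariance.} Both follow from the corresponding facts for $\Vol_G(M,\rho)$, namely independence of the section \cite[Lemma 2.2(1)]{DLSW} and conjugation invariance \cite[Proposition 2.3]{DLSW}, transported through the equality \eqref{eq:cs_inv_vol_nil}. For conjugation one must be careful that $\CS(M;\omega^\sigma,\omega_\rho)$ is formally defined on $P_\rho$, which changes to $P_{g\rho g^{-1}}$. Since the equality with $\Vol_G$ holds for every representation and every section, however, the left-hand side inherits the invariance directly. An intrinsic alternative uses naturality \eqref{eq:cs_naturality} under the bundle isomorphism $P_\rho\to P_{g\rho g^{-1}}$, $[\tilde m,h]\mapsto[\tilde m,gh]$, which carries flat connections to flat connections and sections of $P_\rho/K$ to sections. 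One could also give a direct check of section-independence: sections are homotopic, and a homotopy argument with the closed form $\vol_\rho$ shows that the integrals agree.

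The main subtlety, modest here, is Step 1: one must confirm that the period-group vanishing applies to the specific polynomial $\p_{c_1,-1}$, and that the invariant is therefore a genuine real number rather than a class. Otherwise the proof is a direct assembly of earlier results.
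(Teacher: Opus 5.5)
Your proposal is correct and follows essentially the same route as the paper: set $c_2=-1$ in Proposition~\ref{prop:nil_cs_equals_volume} and integrate to obtain $\Vol_G(M,\rho)$. Then use $\Lambda_\p(P_\rho)=0$ from the Euler-class criterion to make the invariant $\R$-valued, and transfer section-independence and conjugation-invariance from the volume of the representation. Your remarks that the period-group vanishing does not depend on $c_1,c_2$, and your sketched naturality argument for conjugation, are correct but are not needed for the proof.
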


\begin{proof}
    With the normalization $c_2=-1$, Proposition~\ref{prop:nil_cs_equals_volume} gives $\CS_\p(\omega^\sigma,\omega_\rho)=\sigma^*\vol_\rho$.
    Integrating this over $M$ and using the definition \eqref{eq:representation_volume} of the volume of a representation, we obtain $\int_M\CS_\p(\omega^\sigma,\omega_\rho)=\Vol_G(M,\rho)$.
    Since $\Lambda_\p(P_\rho)=0$ as shown above, the \CSname{} invariant of Definition~\ref{dfn:cs_inv_definition} takes values in $\R$, which proves \eqref{eq:cs_inv_vol_nil}.
    Finally, the volume of a representation is independent of the choice of the section and invariant under conjugation of $\rho$, and hence so is the \CSname{} invariant that agrees with it.
\end{proof}

By Theorem~\ref{thm:cs_inv_is_vol_nil}, this invariant is independent of the choice of the section and depends only on $(M,\rho)$.
From now on, we write $\CSvol{\Nil}(M,\rho) \coloneqq \CS(M;\omega^\sigma,\omega_\rho) \in \R$ and call it the volume-type \CSname{} invariant.
Moreover, when $\rho_\gm$ is the holonomy representation, \eqref{eq:rep_vol_geometric_holonomy} and Theorem~\ref{thm:cs_inv_is_vol_nil} yield the following corollary.

\begin{cor}
    \label{cor:cs_inv_nil_geohol}
    If $M$ supports $\Nil$-geometry with metric $g$ and holonomy representation $\rho_\gm \colon \pi_1(M) \to G$, then the volume form of the standard $\Nil$ metric agrees with $\vol_G = X^*\wedge Y^*\wedge Z^*$ normalized above, and
    \begin{equation}
        \CSvol{\Nil}(M,\rho_\gm)= \Vol_G(M,\rho_\gm) = \Vol(M,g).
    \end{equation}
\end{cor}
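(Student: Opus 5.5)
The corollary splits into two pieces: an identification of volume forms on $\Nil$, and then a chain of two equalities already available from earlier results.

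\emph{Step 1: the two volume forms agree.} The standard metric \eqref{eq:nil_metric} is the left-invariant metric for which the left-invariant frame $X,Y,Z$ is orthonormal. Hence its Riemannian volume form is the left-invariant $3$-form that equals $X^*\wedge Y^*\wedge Z^*$ at the identity, up to sign. The sign is fixed by orientation. Orient $\Nil$ so that $(X,Y,Z)$ is positively oriented, consistent with the coordinate orientation of $(x,y,z)$. Then the Riemannian volume form equals $\vol_G$ at the identity.

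This form must also be $G$-invariant, not merely left-invariant. Left translations by $\Nil$ preserve it. Each element $A\in K=\SO(2)$ acts as an orientation-preserving isometric automorphism, so it preserves the metric volume form as well. Since $G=\Nil\rtimes K$ is generated by these, the Riemannian volume form of $g_{\Nil}$ is $G$-invariant. As a $G$-invariant $3$-form on $G/K$ is determined by its value at the identity coset, it coincides with $\vol_G$ everywhere.

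\emph{Step 2: apply the earlier results.} We are now in the setting of Lemma~\ref{lem:rep_vol_properties}\eqref{item:rep_vol_geometric_holonomy}, with $X=\Nil\cong G/K$ and $\vol_G$ the volume form of the $G$-invariant metric $g_{\Nil}$. For the holonomy representation $\rho_\gm$, the metric $g$ and orientation on $M$ are induced by the developing map. That lemma gives $\Vol_G(M,\rho_\gm)=\Vol(M,g)$. Theorem~\ref{thm:cs_inv_is_vol_nil}, applied with $\rho=\rho_\gm$ and the normalization $c_2=-1$, gives $\CSvol{\Nil}(M,\rho_\gm)=\Vol_G(M,\rho_\gm)$. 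Chaining the two equalities proves the corollary.

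The only point needing care is the sign and orientation consistency in Step 1. It requires checking that the developing-map orientation on $M$ matches the orientation for which $X^*\wedge Y^*\wedge Z^*$ is positive. This holds because $G$ is connected and so acts by orientation-preserving maps, as in the proof of Lemma~\ref{lem:rep_vol_properties}. Everything else is a direct citation.
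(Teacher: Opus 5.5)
Your proposal is correct and follows the paper's route: the paper obtains the corollary directly by combining \eqref{eq:rep_vol_geometric_holonomy} with Theorem~\ref{thm:cs_inv_is_vol_nil}. It treats the identification of the standard-metric volume form with $X^*\wedge Y^*\wedge Z^*$ as immediate from the orthonormality of $X,Y,Z$; your Step~1 simply writes out that identification, together with its $G$-invariance and orientation consistency.
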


\subsection{Examples: closed \texorpdfstring{$3$}{3}-manifolds with \texorpdfstring{$\Nil$}{Nil}-geometry}
\label{subsec:nil_examples}

In this subsection, we apply Theorem~\ref{thm:cs_inv_is_vol_nil} and Corollary~\ref{cor:cs_inv_nil_geohol} to closed manifolds supporting $\Nil$-geometry, and then compute the \CSname{} invariant of the Levi-Civita connection.
Throughout, the invariant polynomial is normalized so that $c_2=-1$.

We consider a discrete subgroup $\Gamma \subset \Isom_e\Nil$ such that $M \coloneqq \Gamma \backslash \Nil$ is a closed manifold, with holonomy representation given by the inclusion $\rho_\gm \colon \pi_1(M) \cong \Gamma \hookrightarrow G$.
All the computations below reduce to the case of a lattice with no rotation part.

We use the coordinates of Section~\ref{subsec:nil_prep}, and write an element of $\Nil=\R^2\times\R$ as a pair $(v,w)$ with $v\in\R^2$ and $w\in\R$.

\begin{lem}
    \label{lem:nil_lattice_volume}
    Let $L\subset \R^2$ be a lattice whose fundamental domain has area $1$, with a positively oriented basis $\ell_1,\ell_2$, let $n\in\Z_{>0}$, and, for $\zeta=(0,1/n)$, set
    \[
        \Gamma_0=\langle (\ell_1,0),(\ell_2,0),\zeta\rangle\subset \Nil .
    \]
    Then $\Gamma_0$ is a lattice in $\Nil$, and $N_n\coloneqq\Gamma_0\backslash\Nil$ is the circle bundle over $T^2=\R^2/L$ with Euler number $n$.
    Moreover, its holonomy representation $\rho_\gm^0\colon \pi_1(N_n)\cong\Gamma_0\hookrightarrow G$ satisfies
    \begin{equation}
        \label{eq:cs_nil_lattice}
        \CSvol{\Nil}(N_n,\rho_\gm^0)=\Vol_G(N_n,\rho_\gm^0)=\frac{1}{n}.
    \end{equation}
\end{lem}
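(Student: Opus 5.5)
The plan has three steps: establish the group structure of $\Gamma_0$, identify the quotient as a circle bundle, and compute a covolume.

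\emph{Group structure of $\Gamma_0$.} In the coordinates \eqref{eq:nil_matrix}, multiplication is
\[
(x,y,z)(x',y',z')=(x+x',\,y+y',\,z+z'+xy').
\]
So $(v,w)(v',w')=(v+v',\,w+w'+v_1v'_2)$, and the commutator of $(v,0)$ and $(v',0)$ is $(0,\det(v,v'))$. Since $L$ has covolume $1$ and the basis is positively oriented, $[(\ell_1,0),(\ell_2,0)]=(0,1)=\zeta^n$. The element $\zeta$ is central. Hence every element of $\Gamma_0$ can be written as $(\ell_1,0)^a(\ell_2,0)^b\zeta^c$ with $a,b,c\in\Z$. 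Projecting to $\R^2$ shows that $\Gamma_0$ is discrete. Its image in $\R^2$ is $L$, and its intersection with the centre is $\langle\zeta\rangle=\{0\}\times\tfrac1n\Z$.

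\emph{Circle bundle structure.} This gives a short exact sequence $1\to\langle\zeta\rangle\to\Gamma_0\to L\to 1$. The projection $\Nil\to\R^2$ descends to a map $N_n\to\R^2/L=T^2$ with fibre $\R/\tfrac1n\Z\cong S^1$, so $N_n$ is compact and $\Gamma_0$ is a lattice. The standard presentation $\langle a,b,c\mid [a,b]=c^n,\ c\ \text{central}\rangle$ identifies $N_n$ as the circle bundle over $T^2$ with Euler number $n$. To fix the sign, I would check that the orientation conventions (positively oriented $\ell_1,\ell_2$, fibre oriented by $Z$) give $+n$ rather than $-n$.

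\emph{Volume.} By Corollary~\ref{cor:cs_inv_nil_geohol}, applied to $\rho_\gm^0$, it suffices to show $\Vol(N_n,g)=1/n$. The volume form of \eqref{eq:nil_metric} is $\diff x\wedge\diff y\wedge(\diff z-x\,\diff y)=\diff x\wedge\diff y\wedge\diff z$. A fundamental domain for $\Gamma_0$ is $D\times[0,1/n)$, where $D$ is a fundamental parallelogram for $L$. Indeed, left translation by $(\ell,0)$ shifts $z$ by an amount depending only on $v$, so the fibred structure lets one choose a set-theoretic section over $D$ and fibres of length $1/n$. By Fubini, the volume is $\mathrm{area}(D)\cdot\tfrac1n=\tfrac1n$.

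I expect the main obstacle to be this fundamental-domain and orientation bookkeeping, and the Euler number sign convention. I would handle both by computing directly with the coordinate formula above rather than appealing to general theory.
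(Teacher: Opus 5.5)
Your proposal is correct and follows essentially the same route as the paper: compute $[(\ell_1,0),(\ell_2,0)]=(0,\det(\ell_1,\ell_2))=\zeta^n$, view $\Gamma_0$ as a central extension of $L$ by $\langle\zeta\rangle$ to obtain the lattice and circle-bundle structure with the standard presentation, and then apply Corollary~\ref{cor:cs_inv_nil_geohol} with $\Vol(N_n,g)=1\cdot\frac{1}{n}$. Your fundamental domain $D\times[0,1/n)$ and the identity $\diff x\wedge\diff y\wedge(\diff z-x\,\diff y)=\diff x\wedge\diff y\wedge\diff z$ only spell out the volume step that the paper states in one line, and the paper settles the Euler-number sign by citing Scott's standard presentation rather than by the direct orientation check you propose.
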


\begin{proof}
    Denote the elements $(\ell_i,0)$ of $\Nil$ by the same symbols $\ell_i$.
    By \eqref{eq:nil_matrix} the commutator is given by $[(\ell,0),(\ell',0)]=(0,\det(\ell,\ell'))$, and since the fundamental domain has area $1$ we have $[\ell_1,\ell_2]=(0,1)=\zeta^n$.
    Hence $\Gamma_0=\langle\ell_1,\ell_2,\zeta\rangle$ is an extension of $L$ by the center $\langle\zeta\rangle$, and therefore a discrete cocompact subgroup of $\Nil$, that is, a lattice; in particular $N_n$ is a closed manifold.
    The quotient by the center $\Nil\to\R^2$ maps $\Gamma_0$ onto $L$, so $N_n$ is a circle bundle over $T^2$ with fiber $\langle\zeta\rangle\backslash\R\cong S^1$, and
    \[
        \pi_1(N_n)\cong\Gamma_0=\langle\, \ell_1,\ell_2,\zeta \mid [\ell_1,\ell_2]=\zeta^n,\ [\ell_1,\zeta]=[\ell_2,\zeta]=1 \,\rangle
    \]
    is the standard presentation of the fundamental group of the oriented circle bundle over $T^2$ with Euler number $n$ \cite[\S4]{Scott}.
    The volume of $N_n$ is $1\cdot(1/n)=1/n$, so Corollary~\ref{cor:cs_inv_nil_geohol} gives \eqref{eq:cs_nil_lattice}.
\end{proof}

A holonomy representation with non-trivial rotation part is obtained by adjoining to this lattice an isometry whose rotation part has finite order.
We treat the simplest case, in which the rotation part has order $2$.

\begin{exm}
    \label{exm:nil_rotation}
    Let $n$ be a positive even integer, take $L=\Z^2$ with basis $\ell_1=(1,0)$ and $\ell_2=(0,1)$ in Lemma~\ref{lem:nil_lattice_volume}, and set
    $\tau\coloneqq\left(\left(0,1/(2n)\right),-I\right)\in\Nil\rtimes K$ and $\Gamma_{2,n}\coloneqq\langle\Gamma_0,\tau\rangle$.
    Since the translation part of $\tau$ is central, conjugation by $\tau$ acts on $\Nil$ as the automorphism $(x,y,z)\mapsto(-x,-y,z)$ induced by $-I$, which sends $\ell_1$ and $\ell_2$ to their inverses and fixes $\zeta$; hence $\Gamma_0$ is preserved.
    As $\tau^2=\zeta\in\Gamma_0$ and $\tau\notin\Gamma_0$, the subgroup $\Gamma_0$ is normal of index $2$.
    By the classification of $3$-dimensional almost-Bieberbach groups \cite[\S5, Item 2]{DIKL}, the group $\Gamma_{2,n}$ is torsion-free if and only if $n$ is even.
    Hence $M_{2,n}\coloneqq\Gamma_{2,n}\backslash\Nil$ is a closed manifold, and $\rho_{\gm}\colon\pi_1(M_{2,n})\cong\Gamma_{2,n}\hookrightarrow\Isom_e\Nil$ is a holonomy representation for which the $K$-component of $\rho_{\gm}(\tau)$ is $-I$.
    Since $N_n\to M_{2,n}$ is an orientation-preserving double cover, the naturality \eqref{eq:rep_vol_naturality} of the volume of a representation, Lemma~\ref{lem:nil_lattice_volume}, and Theorem~\ref{thm:cs_inv_is_vol_nil} give
    \[
        \CSvol{\Nil}(M_{2,n},\rho_{\gm})=\Vol_G(M_{2,n},\rho_{\gm})=\frac{1}{2n}.
    \]
\end{exm}

Next, we compute, for the same manifolds, the invariant $\CS(M,g)$ of Section~\ref{subsec:metric_cs}, and compare it with the value given by the main theorem of this section.

\begin{prop}
    \label{prop:metric_cs_nil}
    Let $\Gamma\subset\Nil$ be a lattice, and equip $M=\Gamma\backslash\Nil$ with the standard $\Nil$ metric $g$.
    Then
    \[
        \CS(M,g) \equiv -\frac{1}{16\pi^2}\Vol(M,g) \pmod{\Z}.
    \]
\end{prop}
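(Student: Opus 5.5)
The plan is to apply Lemma~\ref{lem:metric_cs_left_invariant} directly, which Remark~\ref{rmk:unimodular_case} permits since $\Nil$ is unimodular. The only work is to find a positively oriented orthonormal basis of $\nil$ with brackets of the cyclic form $[E_i,E_j]=\lambda_k E_k$, read off $\lambda_1,\lambda_2,\lambda_3$, and substitute.

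For the standard metric \eqref{eq:nil_metric}, the left-invariant frame $X,Y,Z$ is orthonormal. Its only nontrivial bracket is $[X,Y]=Z$, with $[Y,Z]=[Z,X]=0$. I will set $(E_1,E_2,E_3)=(X,Y,Z)$, which is positively oriented for the orientation of $M$ induced from $\Nil$ (the same orientation used in the normalization $\vol_G=X^*\wedge Y^*\wedge Z^*$). This gives $[E_2,E_3]=0$, $[E_3,E_1]=0$, and $[E_1,E_2]=E_3$, so
\[
	(\lambda_1,\lambda_2,\lambda_3)=(0,0,1).
\]
Then $\mu_i=(\lambda_1+\lambda_2+\lambda_3)/2-\lambda_i$ yields $\mu_1=\mu_2=1/2$ and $\mu_3=-1/2$.

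Substituting into \eqref{eq:metric_cs_left_invariant}, we have $\lambda_1\lambda_2\lambda_3=0$ and $4\mu_1\mu_2\mu_3=-1/2$. The numerator is therefore $1/2$, and
\[
	\CS(M,g)\equiv-\frac{1/2}{8\pi^2}\Vol(M,g)=-\frac{1}{16\pi^2}\Vol(M,g)\pmod{\Z}.
\]
As a sanity check, the Levi-Civita connection forms should be $\omega_{23}=-\tfrac12 e^1$, $\omega_{31}=-\tfrac12 e^2$, and $\omega_{12}=\tfrac12 e^3$, which matches the familiar formula $\nabla_XY=\tfrac12 Z$ for the Heisenberg metric.

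The only delicate point is convention-checking. I need to confirm that the orientation of $M$ matches the ordering $(X,Y,Z)$, and that a lattice $\Gamma\subset\Nil$ acts by left translations, so that the left-invariant frame descends and the hypotheses of Lemma~\ref{lem:metric_cs_left_invariant} hold. Both follow from the setup: $\Gamma$ acts freely and cocompactly, and the orientation is the one induced from $X$. No further computation is needed.
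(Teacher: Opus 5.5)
Your proof is correct and is essentially the paper's argument: take $(E_1,E_2,E_3)=(X,Y,Z)$ in Lemma~\ref{lem:metric_cs_left_invariant}, read off $(\lambda_1,\lambda_2,\lambda_3)=(0,0,1)$ and $(\mu_1,\mu_2,\mu_3)=(1/2,1/2,-1/2)$, and substitute into \eqref{eq:metric_cs_left_invariant}. Your arithmetic, including $4\mu_1\mu_2\mu_3=-1/2$, and your check of the connection forms are both consistent with the lemma.
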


\begin{proof}
    The left-invariant frame $(X,Y,Z)$ is orthonormal and positively oriented for $\vol_G$, and all brackets other than $[X,Y]=Z$ vanish.
    The structure constants in Lemma~\ref{lem:metric_cs_left_invariant} are therefore
    \[
        (\lambda_1,\lambda_2,\lambda_3)=(0,0,1), \quad (\mu_1,\mu_2,\mu_3)=(1/2,1/2,-1/2).
    \]
    It suffices to substitute these into \eqref{eq:metric_cs_left_invariant}.
\end{proof}

Combined with Corollary~\ref{cor:cs_inv_nil_geohol}, this gives
\[
    \CS(M,g) \equiv -\frac{1}{16\pi^2}\CSvol{\Nil}(M,\rho_\gm) \pmod{\Z}
\]
for the holonomy representation $\rho_\gm$ of a lattice.

\begin{rmk}
    \label{rmk:metric_cs_nil_rotation}
    For the manifolds $M_{2,n}$ of Example~\ref{exm:nil_rotation}, the holonomy representation has non-trivial rotation part and the left-invariant frame does not descend to the quotient, so Lemma~\ref{lem:metric_cs_left_invariant} does not apply.
    On the other hand, the orientation-preserving double cover $q \colon N_n \to M_{2,n}$ is a local isometry, so pulling back the integrand of \eqref{eq:metric_cs_explicit} gives $\CS(N_n,g)=2\,\CS(M_{2,n},g)$.
    The left-hand side is determined by Proposition~\ref{prop:metric_cs_nil}, but multiplication by $2$ on $\R/\Z$ is not injective, and this relation therefore determines $\CS(M_{2,n},g)$ only up to the indeterminacy $(1/2)\Z/\Z$.
\end{rmk}

\section{Chern--Simons invariants in \texorpdfstring{$\Sol$}{Sol}-geometry}
\label{sec:Sol}

In this section, we show that the volume-type \CSname{} invariant in $\Sol$-geometry equals the volume of the representation (Theorem~\ref{thm:cs_inv_vol_sol}).
We then examine its relation to the \CSname{} invariant of the Levi-Civita connection.
Since $K$ is trivial here, we have $P_\rho/K = P_\rho$, and the section of $P_\rho/K$ from which the auxiliary connection is constructed is a section $s \colon M \to P_\rho$, that is, a trivialization of $P_\rho$.

\subsection{Preliminaries on \texorpdfstring{$\Sol$}{Sol}-geometry}
\label{subsec:sol_prep}

The group $\Sol$ is the semidirect product
\begin{equation}
    \Sol = \R^2\rtimes_\phi\R, \quad
    \phi_z = \begin{pmatrix} e^{-z} & 0 \\ 0 & e^{z} \end{pmatrix} .
    \label{eq:sol_semidirect}
\end{equation}
Let $(x,y,z)$ be the coordinates on $\R^2\times\R$.
Assigning to $(x,y,z)$ the matrix
\begin{equation}
    \begin{pmatrix} e^{-z} & 0 & x \\ 0 & e^{z} & y \\ 0 & 0 & 1 \end{pmatrix} ,
    \label{eq:sol_matrix}
\end{equation}
the multiplication is matrix multiplication, as in \cite[\S4]{Scott}.
We write $X,Y,Z$ for the left-invariant frame in these coordinates.
Since $\det\phi_z=1$, the group $\Sol$ is unimodular, and its left-invariant volume form is
\begin{equation}
    \vol_G = \diff x\wedge\diff y\wedge\diff z \in \mathcal{A}^3(\Sol;\R).
    \label{eq:sol_volume_form}
\end{equation}

We call the left-invariant metric making $X,Y,Z$ orthonormal the \emph{standard metric} on $\Sol$; in coordinates,
\begin{equation}
    g_{\Sol} = e^{2z}\diff x^2+e^{-2z}\diff y^2+\diff z^2 .
    \label{eq:sol_metric}
\end{equation}

Throughout this section, $G=\Isom_e\Sol \cong \Sol$ and $K=\{1\}$ \cite[Theorem 3.3]{HaLee}.
Taking $X, Y, Z$ above as a basis of the Lie algebra $\g = \sol$ of $G$, the non-trivial brackets are
\[
    [X,Z] = X, \quad [Y,Z] = -Y .
\]
Solving \eqref{eq:ad_invariance} for the basis $\{X,Y,Z\}$, we find that a symmetric bilinear form $\p = \langle\cdot,\cdot\rangle$ on $\g$ is $G$-invariant if and only if all its components vanish except
\begin{equation}
    \langle Z,Z\rangle = c
    \label{eq:inv_poly_sol}
\end{equation}
for some $c\in\R$.
We denote this polynomial by $\p=\p_c$.

Since $\Sol \cong \R^3$ is contractible and in particular simply connected, the bundle $P_\rho$ is trivial by \cite[Lemma 2.1]{Freed1}.
A connection $\omega = \omega_X X + \omega_Y Y + \omega_Z Z \in \mathcal{A}^1(P_\rho;\g)$ is flat if and only if $\diff \omega + (1/2)[\omega,\omega] = 0$.
Using the Lie brackets, this condition is equivalent to
\begin{equation}
    \diff\omega_X = -\omega_X \wedge \omega_Z, \quad
    \diff\omega_Y = \omega_Y \wedge \omega_Z, \quad
    \diff\omega_Z = 0.
    \label{eq:flat_conditions_sol}
\end{equation}

As in $\Nil$-geometry, a flat connection and a trivial connection yield no non-trivial invariant.
In $\Sol$-geometry the \CSname{} form itself vanishes.

\begin{prop}
    \label{prop:cs_triv_flat_sol_exact}
    For any section $s \colon M \to P_\rho$ and any flat connection $\omega_\flat$ on $P_\rho$, we have $\CS_\p(\omega_\flat,\omega_\triv^s) = 0$.
\end{prop}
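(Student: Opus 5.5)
We follow the same route as in the proof of Proposition~\ref{prop:flat_and_trivial_connection_cs}. Set $a \coloneqq \omega_\flat - \omega_\triv^s$ and write $a = a_X X + a_Y Y + a_Z Z$ with respect to the trivialization given by $s$. Both connections are flat, so \eqref{eq:cs_formula_flat_flat} gives
\[
	\CS_\p(\omega_\flat,\omega_\triv^s) = -\frac{1}{6}\langle a,[a,a]\rangle .
\]
It therefore suffices to show that $\langle a,[a,a]\rangle = 0$. The vanishing comes from the shape of the bracket on $\sol$ together with \eqref{eq:inv_poly_sol}, and no differential equation is needed.

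Next we compute $[a,a]$ from the bracket relations $[X,Z]=X$ and $[Y,Z]=-Y$. Using $[\alpha\otimes\xi,\beta\otimes\eta]=(\alpha\wedge\beta)\otimes[\xi,\eta]$, we obtain
\[
	[a,a] = 2\,a_X\wedge a_Z\,X - 2\,a_Y\wedge a_Z\,Y ,
\]
because $[X,Y]=0$ and $[Z,Z]=0$. The key observation is that $[\g,\g]$ is spanned by $X$ and $Y$, so $[a,a]$ has no $Z$-component.

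By \eqref{eq:inv_poly_sol}, the only non-zero component of $\p$ is $\langle Z,Z\rangle=c$. Pairing with $[a,a]$ therefore picks out only its $Z$-component, which vanishes, and hence $\langle a,[a,a]\rangle = 0$. Equivalently, $\langle\cdot,\cdot\rangle$ vanishes on $[\g,\g]$, so the Wess--Zumino-type term is identically zero. This gives $\CS_\p(\omega_\flat,\omega_\triv^s)=0$.

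The only step requiring care is justifying the use of \eqref{eq:cs_formula_flat_flat}. Under the trivialization given by $s$ we have $\diff_{\omega_\triv^s}=\diff$, and the Maurer--Cartan equation $\diff a+\tfrac12[a,a]=0$ holds on $M$ because both connections are flat. Once this is in place, the bracket computation above is purely algebraic and presents no real obstacle.
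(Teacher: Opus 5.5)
Your proposal is correct and follows essentially the same route as the paper: reduce via \eqref{eq:cs_formula_flat_flat} to $-\frac{1}{6}\langle a,[a,a]\rangle$, note that $[a,a]=2\,a_X\wedge a_Z\,X-2\,a_Y\wedge a_Z\,Y$ has no $Z$-component, and conclude from \eqref{eq:inv_poly_sol} that the pairing vanishes. Your explicit bracket computation and your justification of the Maurer--Cartan equation under the trivialization given by $s$ match what the paper uses.
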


\begin{proof}
    Setting $a \coloneqq \omega_\flat-\omega_\triv^s$, we have $\CS_\p(\omega_\flat,\omega_\triv^s) = -(1/6)\langle a, [a,a] \rangle$ by \eqref{eq:cs_formula_flat_flat}.
    The brackets $[X,Z]=X$, $[Y,Z]=-Y$, and $[X,Y]=0$ show that $[a,a]$ has no $Z$-component.
    Since $\langle Z,Z\rangle$ is the only possibly non-zero component of $\p$, we obtain $\langle a, [a,a] \rangle = 0$.
\end{proof}

We therefore need an auxiliary connection to obtain a non-trivial invariant.
In Section~\ref{subsec:sol_connection}, we construct such a connection $\omega^s$.

Finally, we verify that the period group \eqref{eq:gauge_period_P} vanishes.
Since $G\cong\Sol$ is contractible, we have $H^3(G;\R)=0$ and thus $[\WZ_\p]=0$.
Since $P_\rho$ is also trivial, Corollary~\ref{cor:period_is_zero_criteria}\eqref{item:period_zero_trivial} implies $\Lambda_\p(P_\rho)=0$.

\subsection{Construction of the volume-type Chern--Simons invariant}
\label{subsec:sol_connection}

In this subsection, we construct an auxiliary connection from the flat connection associated with a representation $\rho \colon \pi_1(M) \to G=\Sol$.
Using this connection, we obtain a \CSname{} invariant that equals the volume of the representation.

Since $K= \{1\}$, we have $P_\rho/K = P_\rho$.
Fix a section $s \colon M \to P_\rho$ and write
\[
    s^*\omega_\rho = \alpha X + \beta Y + \gamma Z \in \mathcal{A}^1(M;\g) ,
\]
where $\alpha, \beta, \gamma \in \mathcal{A}^1(M;\R)$.
We use this notation throughout this subsection.
Pulling back \eqref{eq:flat_conditions_sol} by $s$ gives $\diff \alpha = -\alpha \wedge \gamma$, $\diff \beta = \beta \wedge \gamma$, and $\diff \gamma = 0$.
Under the trivialization given by $s$, $\g$-valued $1$-forms on $M$ correspond bijectively to connections on $P_\rho$.
Hence there exists a unique connection $\omega^s$ on $P_\rho$ such that $s^*\omega^s = (\alpha+\beta) Z$.

Next, we show that the \CSname{} form determined by $\omega^s$ and the flat connection $\omega_\rho$ agrees with the volume form of the representation.
We normalize the left-invariant volume form $\vol_G$ of $\Sol$ such that $\vol_G = X^*\wedge Y^*\wedge Z^*$ at the identity.
By the definition of $\vol_\rho$, the components $\alpha, \beta, \gamma$ of $s^*\omega_\rho$ satisfy
\begin{equation}
    s^*\vol_\rho = \alpha\wedge\beta\wedge\gamma \in \mathcal{A}^3(M;\R).
    \label{eq:volume_form_components_sol}
\end{equation}

\begin{prop}
\label{prop:cs_auxiliary_connection_sol}
    For the connection $\omega^s$ associated with any section $s \colon M \to P_\rho$, we have
    \begin{equation}
        \CS_\p(\omega^s,\omega_\rho) = 2c \, s^*\vol_\rho \in \mathcal{A}^3(M;\R).
        \label{eq:cs_vol_form_sol}
    \end{equation}
\end{prop}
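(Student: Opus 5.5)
The plan is to compute everything on $M$ through the trivialization given by $s$. By naturality \eqref{eq:cs_naturality}, in the trivial bundle the relative \CSname{} form is obtained by evaluating \eqref{eq:cs_formula_general_l2} on the pulled-back $\g$-valued forms. Concretely, set $A_1 \coloneqq s^*\omega^s = (\alpha+\beta)Z$ and $A_0 \coloneqq s^*\omega_\rho = \alpha X+\beta Y+\gamma Z$. Then
\[
a = A_1 - A_0 = -\alpha X - \beta Y + (\alpha+\beta-\gamma) Z ,
\]
and curvatures and covariant derivatives become $\diff A + \tfrac12[A,A]$ and $\diff + [A_0,\cdot]$.

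Rather than expanding $\langle a,\diff_{\omega_\rho}a\rangle$ directly, I will argue as in the proof of Proposition~\ref{prop:nil_cs_equals_volume}. Since $\omega_\rho$ is flat, the relation $\Omega^s = \Omega_\rho + \diff_{\omega_\rho}a + \tfrac12[a,a]$ gives $\diff_{\omega_\rho}a = \Omega^s - \tfrac12[a,a]$. Substituting this into \eqref{eq:cs_formula_flat_l2} yields
\[
\CS_\p(\omega^s,\omega_\rho) = \langle a,\Omega^s\rangle - \tfrac16\langle a,[a,a]\rangle .
\]
The cubic term vanishes exactly as in Proposition~\ref{prop:cs_triv_flat_sol_exact}. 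The brackets $[X,Z]=X$, $[Y,Z]=-Y$, $[X,Y]=0$ show that $[a,a]$ has no $Z$-component, while by \eqref{eq:inv_poly_sol} only $\langle Z,Z\rangle=c$ is nonzero.

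For the curvature term, $[A_1,A_1]=0$ because $A_1$ is $Z$-valued. Hence $s^*\Omega^s = \diff(\alpha+\beta)\,Z$. The pulled-back flatness equations give $\diff\alpha = -\alpha\wedge\gamma$ and $\diff\beta = \beta\wedge\gamma$, so $s^*\Omega^s = (-\alpha\wedge\gamma+\beta\wedge\gamma)Z$. Pairing with the $Z$-component of $a$ gives
\[
c(\alpha+\beta-\gamma)\wedge(-\alpha\wedge\gamma+\beta\wedge\gamma) = c(\alpha\wedge\beta\wedge\gamma - \beta\wedge\alpha\wedge\gamma) = 2c\,\alpha\wedge\beta\wedge\gamma .
\]
By \eqref{eq:volume_form_components_sol}, this equals $2c\, s^*\vol_\rho$.

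The only delicate point is the bookkeeping. First, we must justify that the base-space relative form equals this expression in the pulled-back forms; this is the trivial-bundle case of naturality, identifying $\ad P_\rho$ with $M\times\g$ via $s$. Second, we must track the signs in the wedge reorderings. No obstacle beyond this is expected.
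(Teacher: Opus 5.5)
Your proof is correct and is essentially the paper's computation. The paper evaluates \eqref{eq:cs_formula_flat_l2} directly: since neither $[a,a]$ nor $[\omega_\rho,a]$ has a $Z$-component, it gets $\CS_\p(\omega^s,\omega_\rho)=c\,u\wedge\diff u$ with $u=\alpha+\beta-\gamma$, whereas you first rewrite the form as $\langle a,\Omega^s\rangle-\frac16\langle a,[a,a]\rangle$, as in the $\Nil$ case. The two routes coincide because the $Z$-component of $s^*\Omega^s$ is $\diff(\alpha+\beta)=\diff u$ (using $\diff\gamma=0$), and your sign bookkeeping giving $2c\,\alpha\wedge\beta\wedge\gamma$ is right.
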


\begin{proof}
    Under the trivialization given by $s$, the difference $a \coloneqq \omega^s-\omega_\rho$ is expressed as $-\alpha X-\beta Y+u Z$, where $u \coloneqq \alpha+\beta-\gamma$.
    Since $\omega_\rho$ is flat, we compute $\CS_\p(\omega^s,\omega_\rho)$ by \eqref{eq:cs_formula_flat_l2}.
    Since $[\sol,\sol] \subset \operatorname{span}\{X, Y\}$, neither $[a,a]$ nor $[\omega_\rho,a]$ has a $Z$-component, while $\langle Z,Z\rangle = c$ is the only possibly non-zero component of $\p$.
    Thus $\langle a,[a,a]\rangle = 0$, and in $\langle a,\diff_{\omega_\rho}a\rangle$ only the $Z$-component $\diff u$ of $\diff_{\omega_\rho}a$ contributes, giving $\CS_\p(\omega^s,\omega_\rho) = c\,u\wedge\diff u$.
    From the flatness conditions, we have $\diff u = (\beta-\alpha)\wedge\gamma$.
    Since $\gamma \wedge \gamma = 0$, we find
    \[
        u\wedge\diff u
        = (\alpha+\beta)\wedge(\beta-\alpha)\wedge\gamma
        = 2\alpha\wedge\beta\wedge\gamma
        = 2s^*\vol_\rho .
    \]
    Therefore, \eqref{eq:cs_vol_form_sol} holds.
\end{proof}

We now state the main theorem of this section.

\begin{thm}
\label{thm:cs_inv_vol_sol}
    Let $\rho \colon \pi_1(M) \to G$ be a representation, and normalize the invariant polynomial with $c = 1/2$.
    Then the \CSname{} invariant determined by $\omega^s$ and $\omega_\rho$ coincides with the volume of the representation:
    \begin{equation}
    \label{eq:cs_inv_vol_sol}
        \CS(M;\omega^s,\omega_\rho) = \Vol_G(M,\rho) \in \R \, .
    \end{equation}
    This value is independent of the choice of the section $s$ and invariant under conjugation of $\rho$.
\end{thm}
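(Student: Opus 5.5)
The plan mirrors the proof of Theorem~\ref{thm:cs_inv_is_vol_nil}: combine the pointwise identity of Proposition~\ref{prop:cs_auxiliary_connection_sol} with the vanishing of the period group. With the normalization $c=1/2$, Proposition~\ref{prop:cs_auxiliary_connection_sol} gives, for any section $s\colon M\to P_\rho$, the equality of $3$-forms
\[
    \CS_\p(\omega^s,\omega_\rho) = 2\cdot\tfrac{1}{2}\, s^*\vol_\rho = s^*\vol_\rho \in \mathcal{A}^3(M;\R).
\]
Since $K=\{1\}$, we have $P_\rho/K=P_\rho$, so $s$ is itself a section of $P_\rho/K$. Integrating over the closed manifold $M$ and comparing with the definition \eqref{eq:representation_volume} therefore yields $\int_M\CS_\p(\omega^s,\omega_\rho)=\Vol_G(M,\rho)$.

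Next, the value of Definition~\ref{dfn:cs_inv_definition} is a priori only defined in $\R/\Lambda_\p(P_\rho)$. At the end of Section~\ref{subsec:sol_prep} it was shown that $\Lambda_\p(P_\rho)=0$: the group $G\cong\Sol$ is contractible, so $[\WZ_\p]=0$, and $P_\rho$ is trivial, so Corollary~\ref{cor:period_is_zero_criteria}\eqref{item:period_zero_trivial} applies. Hence the invariant is genuinely $\R$-valued, and \eqref{eq:cs_inv_vol_sol} holds as an equality in $\R$.

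Finally, $\Vol_G(M,\rho)$ is independent of the choice of section by \cite[Lemma 2.2(1)]{DLSW} and invariant under conjugation of $\rho$ by \cite[Proposition 2.3]{DLSW}. Because $\CS(M;\omega^s,\omega_\rho)$ agrees with this volume for every $s$, it inherits both properties. There is no real obstacle here; the only point needing care is that $s$ serves both to construct $\omega^s$ and as the section in \eqref{eq:representation_volume}, which is legitimate precisely because $K$ is trivial.
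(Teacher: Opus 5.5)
Your proposal is correct and follows the paper's own proof essentially step for step. You specialize Proposition~\ref{prop:cs_auxiliary_connection_sol} to $c=1/2$, integrate and compare with \eqref{eq:representation_volume}, use $\Lambda_\p(P_\rho)=0$ from Corollary~\ref{cor:period_is_zero_criteria}\eqref{item:period_zero_trivial} to make the invariant $\R$-valued, and then inherit independence of $s$ and conjugation invariance from $\Vol_G(M,\rho)$; your remark that $s$ doubles as a section of $P_\rho/K$ because $K$ is trivial is the same identification the paper makes at the start of the $\Sol$ section.
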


\begin{proof}
    With the normalization $c=1/2$, Proposition~\ref{prop:cs_auxiliary_connection_sol} implies $\CS_\p(\omega^s,\omega_\rho)=s^*\vol_\rho$.
    Integrating this over $M$ and using the definition \eqref{eq:representation_volume} of the volume of a representation, we obtain $\int_M\CS_\p(\omega^s,\omega_\rho)=\Vol_G(M,\rho)$.
    Since $\Lambda_\p(P_\rho)=0$ as shown above, the \CSname{} invariant of Definition~\ref{dfn:cs_inv_definition} takes values in $\R$, which proves \eqref{eq:cs_inv_vol_sol}.
    Finally, the volume of a representation is independent of the choice of the section and invariant under conjugation of $\rho$, and hence so is the \CSname{} invariant that agrees with it.
\end{proof}

As in the case of $\Nil$-geometry, we write $\CSvol{\Sol}(M,\rho) \coloneqq \CS(M;\omega^s,\omega_\rho) \in \R$ from now on.
Moreover, when $\rho_\gm$ is the holonomy representation, \eqref{eq:rep_vol_geometric_holonomy} and Theorem~\ref{thm:cs_inv_vol_sol} yield the following corollary.

\begin{cor}
    \label{cor:cs_inv_sol_geohol}
    If $M$ supports $\Sol$-geometry with metric $g$ and holonomy representation $\rho_\gm \colon \pi_1(M) \to G$, then the volume form of the standard $\Sol$ metric agrees with $\vol_G = X^*\wedge Y^*\wedge Z^*$ normalized above, and
    \begin{equation}
        \CSvol{\Sol}(M,\rho_\gm)= \Vol_G(M,\rho_\gm) = \Vol(M,g).
    \end{equation}
\end{cor}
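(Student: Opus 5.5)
The plan is to deduce the statement directly from Lemma~\ref{lem:rep_vol_properties}\eqref{item:rep_vol_geometric_holonomy} and Theorem~\ref{thm:cs_inv_vol_sol}. The proof has two parts: first identify the Riemannian volume form of the standard $\Sol$ metric with the normalized $\vol_G$, and then chain the two equalities.

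\emph{Step 1: the volume forms agree.} The frame $X,Y,Z$ is left-invariant, and by definition \eqref{eq:sol_metric} of the standard metric it is orthonormal. Hence the Riemannian volume form of $g_{\Sol}$ is the left-invariant form $X^*\wedge Y^*\wedge Z^*$, taken with the orientation in which $(X,Y,Z)$ is positive. This is exactly the normalization of $\vol_G$ fixed before Proposition~\ref{prop:cs_auxiliary_connection_sol}. To match this with \eqref{eq:sol_volume_form}, I will check from the matrix \eqref{eq:sol_matrix} that the dual coframe is $(e^{z}\diff x,\, e^{-z}\diff y,\, \diff z)$. The wedge of these three forms is $\diff x\wedge\diff y\wedge\diff z$, so both descriptions of $\vol_G$ coincide.

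Since $K=\{1\}$, the group $G=\Sol$ acts simply transitively on $X=\Sol$ by left translations, and left translations preserve both the metric and $\vol_G$. Therefore $\vol_G$ is the volume form of a $G$-invariant Riemannian metric on $X\cong G/K$, which is exactly the hypothesis of Lemma~\ref{lem:rep_vol_properties}\eqref{item:rep_vol_geometric_holonomy}.

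\emph{Step 2: apply the lemma and the theorem.} Let $M$ carry a $\Sol$-structure with holonomy $\rho_\gm$ into $G=\Isom_e\Sol$. As in the introduction, give $M$ the induced metric $g$ and the orientation pulled back through the developing map. Lemma~\ref{lem:rep_vol_properties}\eqref{item:rep_vol_geometric_holonomy} then gives $\Vol_G(M,\rho_\gm)=\Vol(M,g)$. Theorem~\ref{thm:cs_inv_vol_sol}, with the normalization $c=1/2$, gives $\CSvol{\Sol}(M,\rho_\gm)=\CS(M;\omega^s,\omega_\rho)=\Vol_G(M,\rho_\gm)$ in $\R$. Combining these two equalities proves the corollary.

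\emph{Main obstacle.} The only point needing care is orientation and sign bookkeeping. The orientation on $M$ induced by the developing map must be the same one under which $\vol_G$ (positive on $(X,Y,Z)$) is the Riemannian volume form. I expect this to hold because $G$ is connected, so it acts by orientation-preserving maps, as already noted in the proof of Lemma~\ref{lem:rep_vol_properties}.
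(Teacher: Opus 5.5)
Your proposal is correct and follows the paper's route: the corollary is obtained by combining \eqref{eq:rep_vol_geometric_holonomy} from Lemma~\ref{lem:rep_vol_properties} with Theorem~\ref{thm:cs_inv_vol_sol} under the normalization $c=1/2$. You also check that the coframe dual to $(X,Y,Z)$ is $(e^{z}\diff x,\, e^{-z}\diff y,\, \diff z)$, so that the Riemannian volume form is $\diff x\wedge\diff y\wedge\diff z = X^*\wedge Y^*\wedge Z^*$. That check is correct and makes explicit what the paper leaves implicit.
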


\subsection{Examples: closed \texorpdfstring{$3$}{3}-manifolds with \texorpdfstring{$\Sol$}{Sol}-geometry}
\label{subsec:sol_examples}

In this subsection, we apply Theorem~\ref{thm:cs_inv_vol_sol} and Corollary~\ref{cor:cs_inv_sol_geohol} to closed manifolds supporting $\Sol$-geometry.
We then compute the \CSname{} invariants of their Levi-Civita connections.
Throughout this subsection, we normalize the invariant polynomial with $c=1/2$.

A standard construction of a closed manifold supporting $\Sol$-geometry is the mapping torus whose gluing map is an Anosov map of $T^2$ \cite[Theorem 5.5]{Scott}.
We use the coordinates and the semidirect product decomposition \eqref{eq:sol_semidirect} of Section~\ref{subsec:sol_prep}.

Let $A\in \SL_2\Z$ be a matrix with $\trace A>2$, with eigenvalues $\lambda>1>\lambda^{-1}>0$ and corresponding eigenvectors $v_+, v_-\in\R^2$, respectively.
Since $\phi_z$ in \eqref{eq:sol_semidirect} contracts the first coordinate and expands the second, we place the eigenvector for $\lambda^{-1}$ in the first column.
Normalizing the eigenvectors so that $\det(v_-\ v_+)=1$, we set $B\coloneqq(v_-\ v_+)\in \SL_2\R$.
Then
\begin{equation}
    B^{-1}AB = \begin{pmatrix} \lambda^{-1} & 0 \\ 0 & \lambda \end{pmatrix} = \phi_{\ln\lambda}.
    \label{eq:sol_conjugation}
\end{equation}
Since $A$ preserves $\Z^2$, the lattice $L\coloneqq B^{-1}\Z^2\subset\R^2$ is preserved by $\phi_{\ln\lambda}$ via \eqref{eq:sol_conjugation}.
Thus
\[
    \Gamma_A \coloneqq L\rtimes(\ln\lambda)\Z \ \subset\ \Sol\cong\Isom_e\Sol
\]
is a lattice in $\Sol$.
For a lattice $L'\subset\R^n$, we denote the volume of the quotient $\R^n/L'$ by $\covol(L')$.
The quotient $M_A\coloneqq\Gamma_A\backslash\Sol$ is the mapping torus of the automorphism of $T^2$ induced by $A$, which is a closed manifold supporting $\Sol$-geometry \cite[\S4]{Scott}.
Let
\[
    \rho_\gm^A\colon \pi_1(M_A)=\Gamma_A \hookrightarrow \Isom_e\Sol
\]
be its holonomy representation.

\begin{prop}
    \label{prop:sol_geometric_holonomy_mapping_torus}
    In the situation above,
    \begin{equation}
        \CSvol{\Sol}(M_A,\rho_\gm^A) = \Vol_G(M_A,\rho_\gm^A) = \ln\lambda.
        \label{eq:cs_vol_sol_mapping_torus}
    \end{equation}
\end{prop}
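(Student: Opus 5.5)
By Corollary~\ref{cor:cs_inv_sol_geohol}, $\CSvol{\Sol}(M_A,\rho_\gm^A)=\Vol_G(M_A,\rho_\gm^A)=\Vol(M_A,g)$, where $g$ is the metric induced by the standard $\Sol$ metric. Since $\Gamma_A\subset\Sol$ acts on $\Sol$ by left translations, the developing map is the identity of $\Sol$ and $M_A$ carries the orientation induced from $\Sol$. The problem therefore reduces to computing the Riemannian volume of $\Gamma_A\backslash\Sol$, that is, the integral of $\vol_G=\diff x\wedge\diff y\wedge\diff z$ from \eqref{eq:sol_volume_form} over a fundamental domain of $\Gamma_A$. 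We should first check that this coordinate form agrees with $X^*\wedge Y^*\wedge Z^*$ at the identity. The left-invariant coframe is $e^{z}\diff x$, $e^{-z}\diff y$, $\diff z$, whose wedge product is exactly $\diff x\wedge\diff y\wedge\diff z$, so the two normalizations coincide.

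Next we choose a fundamental domain. Every element of $\Sol$ can be written as $(v,z)$ with $v\in\R^2$ and $z\in\R$, and the projection $(v,z)\mapsto z$ is a homomorphism onto $\R$. This projection sends $\Gamma_A$ onto $(\ln\lambda)\Z$ with kernel $L$. Thus $M_A$ fibers over $\R/(\ln\lambda)\Z$ with fiber $L\backslash\R^2$, where $L$ acts on the slice $\{z\}$ by translation by $\phi_z(L)$. Concretely, left multiplication by $(\ell,0)$ sends $(v,z)$ to $(\ell+v,z)$. Hence a fundamental domain is $D=\{(v,z): z\in[0,\ln\lambda),\ v\in F\}$, where $F$ is a fundamental parallelogram for $L$ in $\R^2$.

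Integrating $\diff x\wedge\diff y\wedge\diff z$ over $D$ then gives $\covol(L)\cdot\ln\lambda$. Since $L=B^{-1}\Z^2$ with $\det B=1$, we have $\covol(L)=|\det B^{-1}|=1$, so the volume is $\ln\lambda$. This proves \eqref{eq:cs_vol_sol_mapping_torus}.

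The main point to check carefully is that $D$ is a genuine fundamental domain: the $z$-translation generator $(0,\ln\lambda)$ acts by $(v,z)\mapsto(\phi_{\ln\lambda}v,z+\ln\lambda)$, which maps slices to slices and preserves $L$. Because the integrand is left-invariant, the result does not depend on which slice representatives are chosen. We must also confirm that the orientation from the developing map agrees with $\diff x\wedge\diff y\wedge\diff z$, so that the sign is positive. This is immediate because the developing map is the identity.
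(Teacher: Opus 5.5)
Your proposal is correct and is essentially the paper's proof: you reduce via Corollary~\ref{cor:cs_inv_sol_geohol} to the Riemannian volume, integrate $\diff x\wedge\diff y\wedge\diff z$ over the fundamental domain $F\times[0,\ln\lambda)$, and use $\covol(L)=|\det B^{-1}|=1$. There is one harmless slip: under the left action of $\Gamma_A$ the slice $\{z\}$ is translated by $L$ itself, not by $\phi_z(L)$, as your very next sentence correctly states, and in any case $\det\phi_z=1$.
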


\begin{proof}
    A fundamental domain for $\Gamma_A=L\rtimes(\ln\lambda)\Z$ is given by $F_L \times [0,\ln\lambda)$, where $F_L\subset\R^2$ is a fundamental domain for $L$ with area $\covol(L)$.
    Using \eqref{eq:sol_volume_form}, we have
    \[
        \Vol(M_A,g) = \int_0^{\ln\lambda}\covol(L)\,\diff z = \ln\lambda\cdot\covol(L) .
    \]
    Since $L=B^{-1}\Z^2$ with $\det B=1$ and $\covol(\Z^2)=1$, we obtain $\covol(L)=1$.
    Therefore, by Corollary~\ref{cor:cs_inv_sol_geohol}, we have \eqref{eq:cs_vol_sol_mapping_torus}.
\end{proof}

Note that $\covol(L)=1$ depends on the normalization $\det B=1$.
Since the automorphism $(x,y,z)\mapsto(ax,by,z)$ of $\Sol$ changes the covolume of $L$, the value above depends not only on the manifold $M_A$ but also on the choice of $\rho_\gm^A$.

Next, we compute the \CSname{} invariant of the Levi-Civita connection for the same manifolds.

\begin{prop}
    \label{prop:metric_cs_sol}
    Let $\Gamma\subset\Sol$ be a lattice, and equip $M=\Gamma\backslash\Sol$ with the standard $\Sol$ metric $g$.
    Then
    \[
        \CS(M,g) \equiv 0 \pmod{\Z} .
    \]
\end{prop}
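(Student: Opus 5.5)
The plan is to apply Lemma~\ref{lem:metric_cs_left_invariant}, which Remark~\ref{rmk:unimodular_case} says is available for $\Sol$. That lemma needs a positively oriented orthonormal basis $(E_1,E_2,E_3)$ of $\sol$ with $[E_i,E_j]=\lambda_kE_k$ for cyclic $(i,j,k)$. The given frame $(X,Y,Z)$ is orthonormal but not of this form, since $[X,Z]=X$ and $[Y,Z]=-Y$. So the first step is to rotate the $(X,Y)$-plane by $45^\circ$ and set
\[
E_1\coloneqq\tfrac{1}{\sqrt2}(X+Y),\quad E_2\coloneqq\tfrac{1}{\sqrt2}(X-Y),\quad E_3\coloneqq -Z .
\]
This basis is orthonormal for the standard metric. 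It is positively oriented for $\vol_G$: the determinant of the $(X,Y)$-block is $-1$, and the sign change in $E_3$ compensates. Since $\Gamma\subset\Sol$ acts by left translations, the frame descends to $M$, as required in the lemma.

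Next I compute the brackets. Using $[X,Y]=0$,
\[
[E_1,E_2]=\tfrac12\bigl(-[X,Y]+[Y,X]\bigr)=0 .
\]
For the other two,
\[
[E_2,E_3]=-\tfrac{1}{\sqrt2}\bigl([X,Z]-[Y,Z]\bigr)=-\tfrac{1}{\sqrt2}(X+Y)=-E_1,\qquad
[E_3,E_1]=-\tfrac{1}{\sqrt2}\bigl([Z,X]+[Z,Y]\bigr)=-\tfrac{1}{\sqrt2}(-X+Y)=E_2 .
\]
Hence $(\lambda_1,\lambda_2,\lambda_3)=(-1,1,0)$. Then $\sum_i\lambda_i=0$, so $\mu_i=-\lambda_i$ and $(\mu_1,\mu_2,\mu_3)=(1,-1,0)$.

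Substituting into \eqref{eq:metric_cs_left_invariant}, both $\lambda_1\lambda_2\lambda_3$ and $\mu_1\mu_2\mu_3$ vanish because $\lambda_3=\mu_3=0$. Therefore $\CS(M,g)\equiv 0 \pmod{\Z}$.

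The only point needing care is the first step: finding the Milnor-type orthonormal basis and checking its orientation. The orientation check turns out to be harmless, because reversing orientation only flips the signs of the $\lambda_i$, and the products still vanish. I would also note that, unlike the $\Nil$ case of Proposition~\ref{prop:metric_cs_nil}, the integrand of \eqref{eq:metric_cs_explicit} here vanishes pointwise.
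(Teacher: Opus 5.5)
Your proposal is correct and follows the paper's proof. Both apply Lemma~\ref{lem:metric_cs_left_invariant} to a $45^\circ$-rotated orthonormal frame, obtaining $(\lambda_1,\lambda_2,\lambda_3)=(-1,1,0)$ and $(\mu_1,\mu_2,\mu_3)=(1,-1,0)$, so that $\lambda_3=\mu_3=0$ kills the right-hand side of \eqref{eq:metric_cs_left_invariant}; the paper's frame $\bigl((X+Y)/\sqrt2,\,(-X+Y)/\sqrt2,\,Z\bigr)$ differs from yours only by negating $E_2$ and $E_3$, which leaves these structure constants unchanged.
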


\begin{proof}
    The left-invariant frame $(X,Y,Z)$ is orthonormal and positively oriented.
    Replacing it with
    \[
        \frac{X+Y}{\sqrt2}, \quad \frac{-X+Y}{\sqrt2}, \quad Z,
    \]
    we obtain an orthonormal frame as in Lemma~\ref{lem:metric_cs_left_invariant}.
    The structure constants are then
    \[
        (\lambda_1,\lambda_2,\lambda_3) = (-1,1,0), \quad (\mu_1,\mu_2,\mu_3) = (1,-1,0).
    \]
    Since $\lambda_3=\mu_3=0$, the right-hand side of \eqref{eq:metric_cs_left_invariant} vanishes.
\end{proof}

Since $\Isom_e\Sol\cong\Sol$, whenever the image of a holonomy representation is contained in $\Isom_e\Sol$, this image is a lattice in $\Sol$ and the \CSname{} invariant of the Levi-Civita connection vanishes.
This applies to mapping tori such as $M_A$, whose gluing maps are induced by matrices $A\in\SL_2\Z$ with $\trace A>2$.
The remaining closed manifolds supporting $\Sol$-geometry involve other connected components of $\Isom\Sol$ \cite[\S4, Theorem 5.3(i)]{Scott}.

\section{Chern--Simons invariants in Euclidean geometry}
\label{sec:Euclid}

In this section, we study Euclidean geometry.
Here, the period group $\Lambda_\p(G)$ of $G$ is $\Z$ or $0$ for the two normalizations of the invariant polynomial used below, which leads to two \CSname{} invariants.
In Section~\ref{subsec:euclid_flat}, we treat the rotation-type invariant determined by a flat connection and a trivial connection (Theorem~\ref{thm:cs_inv_flat_euclid}).
In Section~\ref{subsec:euclid_connection}, we construct the volume-type invariant that equals the volume of the representation (Theorem~\ref{thm:euclidean_cs_equals_volume}).

\subsection{Preliminaries on Euclidean geometry}
\label{subsec:euclid_prep}

The model space $\E^3=\R^3$ is an additive group with coordinates $(x_1,x_2,x_3)$.
We write $P_1,P_2,P_3$ for the left-invariant frame in these coordinates, which is orthonormal for the standard metric
\begin{equation}
    g_{\E^3}=\diff x_1^2+\diff x_2^2+\diff x_3^2
    \label{eq:euclid_metric}
\end{equation}
as in \cite[\S4]{Scott}.

Throughout this section, $G=\Isom_e\E^3=\SE(3)=\R^{3}\rtimes \SO(3)$ and $K=\SO(3)$ \cite[Theorem 3.1]{HaLee}.
Here $G$ acts on $\E^3$ by $(b,A)\cdot x = Ax+b$, and assigning to $(b,A)$ the matrix
\begin{equation}
    \begin{pmatrix} A & b \\ 0 & 1 \end{pmatrix} ,
    \label{eq:euclid_matrix}
\end{equation}
the multiplication is matrix multiplication.
The Lie algebra of $G$ is the semidirect product $\mathfrak{g}=\mathfrak{se}(3)=\R^{3}\rtimes\mathfrak{so}(3)$.
Regarding $P_{1},P_{2},P_{3}$ above as a basis of $\R^3$ and adjoining generators $\{J_1,J_2,J_3\}$ of $\so(3)$, we obtain a basis of $\g$ whose non-trivial brackets are
\[
    [J_i, J_j] = \varepsilon_{ijk}J_k, \quad
    [J_i, P_j] = \varepsilon_{ijk}P_k, \quad
    [P_i, P_j] = 0 ,
\]
where $\varepsilon$ is the Levi-Civita symbol normalized by $\varepsilon_{123}=1$ and summation over $k$ is understood.

Solving \eqref{eq:ad_invariance} for the basis $\{P_i,J_i\}$, we find that a symmetric bilinear form $\p=\langle\cdot,\cdot\rangle$ on $\g$ is $G$-invariant if and only if all its components vanish except
\begin{equation}
    \langle J_i, J_j \rangle = c_1 \delta_{ij}, \quad \langle P_i, J_j \rangle = c_2 \delta_{ij}
    \label{eq:inv_poly_euclid}
\end{equation}
for some $c_1,c_2\in\R$.
We denote this polynomial by $\p=\p_{c_1,c_2}$.

We first formulate the flatness conditions, which will be used in both Section~\ref{subsec:euclid_flat} and Section~\ref{subsec:euclid_connection}.
We write a connection $\omega$ on $P_\rho$ in components as
$\omega = \sum_{i=1}^{3}\omega^{J_i}J_i + \sum_{i=1}^{3}\omega^{P_i}P_i \in \mathcal{A}^1(P_\rho;\g)$.
A connection $\omega$ is flat if and only if $\diff\omega+(1/2)[\omega,\omega]=0$.
Using the Lie brackets, this condition is equivalent to the following equations for every cyclic permutation $(i,j,k)$ of $(1,2,3)$:
\begin{subequations}
\label{eq:flat_conditions_euclid}
    \begin{align}
        \diff\omega^{J_i} + \omega^{J_j}\wedge\omega^{J_k} &= 0, \label{eq:flat_euclid_J}\\
        \diff\omega^{P_i} + \omega^{J_j}\wedge\omega^{P_k} - \omega^{J_k}\wedge\omega^{P_j} &= 0. \label{eq:flat_euclid_P}
    \end{align}
\end{subequations}

We record the computation of $-(1/6)\langle A,[A,A]\rangle$ for a flat $\g$-valued $1$-form $A$, which appears both in \eqref{eq:cs_formula_flat_flat} and in $\WZ_\p$.

\begin{lem}
    \label{lem:euclid_flat_potential}
    Let
    $A = \theta^1J_1+\theta^2J_2+\theta^3J_3 + e^1P_1+e^2P_2+e^3P_3 \in \mathcal{A}^1(N;\g)$
    be a $\g=\se(3)$-valued $1$-form on a manifold $N$ satisfying the Maurer--Cartan equation $\diff A + (1/2)[A,A] = 0$.
    Then, for the invariant polynomial $\p=\p_{c_1,c_2}$ of \eqref{eq:inv_poly_euclid},
    \[
        -\frac{1}{6}\langle A,[A,A]\rangle
        = -c_1\,\theta^1\wedge\theta^2\wedge\theta^3 -c_2\,\diff(
        \theta^1\wedge e^1
        +\theta^2\wedge e^2
        +\theta^3\wedge e^3
        )
        \in \mathcal{A}^3(N;\R).
    \]
\end{lem}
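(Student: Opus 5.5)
The plan is a direct component computation, split according to the two parameters $c_1$ and $c_2$ of \eqref{eq:inv_poly_euclid}. First I will expand $[A,A]$ using the brackets of $\se(3)$:
\[
[A,A] = \sum_{(i,j,k)} \Bigl( 2\theta^j\wedge\theta^k\,J_i + 2(\theta^j\wedge e^k - \theta^k\wedge e^j)\,P_i \Bigr),
\]
summed over cyclic permutations. The $e\wedge e$ terms vanish because $[P_i,P_j]=0$.

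Next I will pair this with $A$. The only non-zero components of $\p$ are $\langle J_i,J_j\rangle=c_1\delta_{ij}$ and $\langle P_i,J_j\rangle=c_2\delta_{ij}$, so $\langle A,[A,A]\rangle$ splits into a $c_1$-part and a $c_2$-part.

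\textbf{The $c_1$-part.} This part only sees the $\so(3)$-components. It equals $c_1\sum_i \theta^i\wedge 2\theta^j\wedge\theta^k = 6c_1\,\theta^1\wedge\theta^2\wedge\theta^3$. Multiplying by $-1/6$ gives $-c_1\,\theta^1\wedge\theta^2\wedge\theta^3$, as claimed.

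\textbf{The $c_2$-part.} This part consists of two kinds of terms:
\[
c_2\sum_i e^i\wedge 2\theta^j\wedge\theta^k \quad\text{and}\quad c_2\sum_i \theta^i\wedge 2(\theta^j\wedge e^k-\theta^k\wedge e^j).
\]
Permuting wedge factors, each term of the second sum reduces to a multiple of some $e^m\wedge\theta^{m+1}\wedge\theta^{m+2}$. I expect the total $c_2$-part of $\langle A,[A,A]\rangle$ to be $6c_2\sum_i e^i\wedge\theta^j\wedge\theta^k$; up to sign, this multiple is what has to be checked.

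Finally, I will compare this with $\diff\bigl(\sum_i\theta^i\wedge e^i\bigr) = \sum_i\bigl(\diff\theta^i\wedge e^i - \theta^i\wedge\diff e^i\bigr)$. The Maurer--Cartan equation in components is \eqref{eq:flat_conditions_euclid} with $\omega$ replaced by $A$, that is, $\diff\theta^i=-\theta^j\wedge\theta^k$ and $\diff e^i = -\theta^j\wedge e^k+\theta^k\wedge e^j$. Substituting these, the first sum gives $-\sum_i\theta^j\wedge\theta^k\wedge e^i$. In the second sum, $\theta^i\wedge\theta^j\wedge e^k$-type terms appear. Re-indexing cyclically, I expect the second sum also to collapse to a multiple of $\sum_i e^i\wedge\theta^j\wedge\theta^k$. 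The aim is to show that $\diff(\sum_i\theta^i\wedge e^i)$ equals $\sum_i e^i\wedge\theta^j\wedge\theta^k$ times a constant, and that this constant makes $-\tfrac16$ times the $c_2$-part equal to $-c_2\,\diff(\sum\theta^i\wedge e^i)$.

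The main obstacle is the sign and coefficient bookkeeping in this cyclic re-indexing. It involves reordering $1$-forms in triple wedges, matching terms like $\theta^k\wedge\theta^i\wedge e^j$ to the form $e^j\wedge\theta^k\wedge\theta^i$, and tracking which factor appears with which index. I would handle it by writing out the $i=1$ terms explicitly and then using cyclic symmetry. If the constants end up mismatched, I would recheck the convention $[a,a](u,v)=2[a(u),a(v)]$ and the sign in \eqref{eq:flat_euclid_P} before anything else.
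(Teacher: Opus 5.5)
Your plan is the paper's proof: expand $[A,A]$, keep only the purely $\so(3)$ pairing (weighted by $c_1$) and the mixed pairings with one $\R^3$-component (weighted by $c_2$), and use the Maurer--Cartan equation to identify $\Theta = \sum e^i\wedge\theta^j\wedge\theta^k$ (summed over cyclic $(i,j,k)$) with $\diff\bigl(\sum_i\theta^i\wedge e^i\bigr)$. The bookkeeping you left open closes with the constants you anticipated: pairing $e^iP_i$ with the $J_i$-components of $[A,A]$ gives $2c_2\Theta$, and pairing $\theta^iJ_i$ with the $P_i$-components gives $4c_2\Theta$; moreover $\sum_i\diff\theta^i\wedge e^i=-\Theta$ and $-\sum_i\theta^i\wedge\diff e^i=2\Theta$, so $\diff\bigl(\sum_i\theta^i\wedge e^i\bigr)=\Theta$ and $-\tfrac16\bigl(6c_1\,\theta^1\wedge\theta^2\wedge\theta^3+6c_2\Theta\bigr)$ is exactly the claimed right-hand side.
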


\begin{proof}
    Since $\R^3$ is abelian and $\langle P_i,P_j\rangle=0$, the only non-zero contributions to $\langle A,[A,A]\rangle$ come from terms with three $\so(3)$-components and from mixed terms with two $\so(3)$-components and one $\R^3$-component.
    Let
    \[
        \Theta \coloneqq \theta^1\wedge\theta^2\wedge e^3 + \theta^2\wedge\theta^3\wedge e^1 + \theta^3\wedge\theta^1\wedge e^2 .
    \]
    Using the Lie brackets and \eqref{eq:inv_poly_euclid}, the first contribution is $6c_1\,\theta^1\wedge\theta^2\wedge\theta^3$, and the second is $6c_2\,\Theta$.
    On the other hand, substituting \eqref{eq:flat_conditions_euclid} into $\diff(\theta^i\wedge e^i)=\diff\theta^i\wedge e^i-\theta^i\wedge\diff e^i$ and summing over $i=1,2,3$, we obtain
    \[
        \diff(\theta^1\wedge e^1 + \theta^2\wedge e^2 + \theta^3\wedge e^3) = \Theta .
    \]
    This proves the assertion.
\end{proof}

Next, we determine the period group.
By \eqref{eq:universal_period}, this reduces to integrating $\WZ_\p$ over the classes in $H_3(G;\Z)$.

\begin{lem}
    \label{lem:period_euclidean}
    The period group of $G=\SE(3)$ is
    \begin{equation}
        \Lambda_\p(G)=8\pi^2c_1\Z ,
        \label{eq:period_euclidean_c1}
    \end{equation}
    and $\WZ_\p$ is exact on $G$ whenever $c_1=0$.
\end{lem}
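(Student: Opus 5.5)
The plan is to compute $\int_\gamma \WZ_\p$ over generators of $H_3(G;\Z)$, reducing everything to $\SO(3)$ via the homotopy equivalence $K=\SO(3)\hookrightarrow G$. Since $G=\R^3\rtimes\SO(3)$ is diffeomorphic to $\R^3\times\SO(3)$, the inclusion $i\colon\SO(3)\hookrightarrow G$ induces isomorphisms $H_3(\SO(3);\Z)\cong H_3(G;\Z)\cong\Z$ and $H^3(G;\R)\cong H^3(\SO(3);\R)\cong\R$. Hence $\Lambda_\p(G)=\{\int_{i_*[\gamma]}\WZ_\p\}=\{\int_\gamma i^*\WZ_\p\}$ with $\gamma$ running over $H_3(\SO(3);\Z)$, which is generated by the fundamental class $[\SO(3)]$. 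So it suffices to compute $\int_{\SO(3)} i^*\WZ_\p$.

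To compute $\WZ_\p$ itself, I will apply Lemma~\ref{lem:euclid_flat_potential} with $N=G$ and $A=\omega_\MC$. This is legitimate because the Maurer--Cartan form satisfies $\diff\omega_\MC+\tfrac12[\omega_\MC,\omega_\MC]=0$. Writing $\omega_\MC=\sum\theta^iJ_i+\sum e^iP_i$, the lemma gives
\[
\WZ_\p=-c_1\,\theta^1\wedge\theta^2\wedge\theta^3-c_2\,\diff\Bigl(\sum_i\theta^i\wedge e^i\Bigr).
\]
The second term is exact. Hence $\WZ_\p$ is exact when $c_1=0$, which is the second claim.

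Next I restrict to $\SO(3)$. There $i^*\omega_\MC$ is the Maurer--Cartan form of $\SO(3)$, so $i^*e^i=0$ and the $\theta^i$ restrict to the left-invariant coframe dual to $J_1,J_2,J_3$. Therefore
\[
\int_{\SO(3)}i^*\WZ_\p=-c_1\int_{\SO(3)}\theta^1\wedge\theta^2\wedge\theta^3=\mp c_1\Vol(\SO(3)),
\]
where the volume is taken for the metric making $J_1,J_2,J_3$ orthonormal. The sign depends on orientation and is irrelevant for the subgroup. It remains to identify this volume. The basis $J_i$ with $[J_i,J_j]=\varepsilon_{ijk}J_k$ is the standard basis $L_i$ of $\so(3)$, with $\trace(L_iL_j)=-2\delta_{ij}$, so by \cite[Equation 5.13]{CS1} (as quoted in Remark~\ref{rmk:metric_cs_as_relative}) we have $\Vol(\SO(3))=8\pi^2$. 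Consequently $\Lambda_\p(G)=8\pi^2c_1\Z$.

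The only delicate step I anticipate is this normalization. I must make sure the $J_i$ with the stated bracket really are the standard generators $L_i$, so that the $8\pi^2$ applies. As an independent check, I would use the double cover $SU(2)\cong S^3\to\SO(3)$: the matching metric makes $S^3$ a round sphere of radius $2$, with volume $2\pi^2\cdot 8=16\pi^2$, and halving gives $8\pi^2$. I would also confirm that the degree-one generator of $H_3(\SO(3);\Z)$ is indeed the fundamental class, since $\SO(3)\cong\R P^3$ is orientable.
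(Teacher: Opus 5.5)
Your proposal is correct and follows essentially the same route as the paper: reduce $H_3(G;\Z)$ to the fundamental class $[\SO(3)]$ via the homotopy equivalence, compute $\WZ_\p$ by applying Lemma~\ref{lem:euclid_flat_potential} to $\omega_\MC$, restrict to $\SO(3)$ where the $e^i$ vanish, and use $\Vol(\SO(3))=8\pi^2$. Two of your steps are done explicitly where the paper leaves them implicit: the exactness for $c_1=0$ read off from the $c_2$-term, and the radius-$2$ $S^3$ double-cover check of the $8\pi^2$ normalization (which is correct).
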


\begin{proof}
    Since $\SE(3)=\R^3\rtimes\SO(3)\simeq\SO(3)$, we have $H_3(G;\Z)\cong H_3(\SO(3);\Z)\cong\Z$, generated by the fundamental class $[\SO(3)]$.
    The Maurer--Cartan form on $G$ is $\omega_\MC=\theta^1J_1+\theta^2J_2+\theta^3J_3+e^1P_1+e^2P_2+e^3P_3$.
    By Lemma~\ref{lem:euclid_flat_potential},
    \[
        \WZ_\p = -c_1\,\theta^1\wedge\theta^2\wedge\theta^3 - c_2\,\diff(\theta^1\wedge e^1 + \theta^2\wedge e^2 + \theta^3\wedge e^3) \in \mathcal{A}^3(G;\R).
    \]
    Since the generator $[\SO(3)]$ is represented by the subgroup $\SO(3)\subset\SE(3)$, we have $\WZ_\p|_{\SO(3)}=-c_1\,\theta^1\wedge\theta^2\wedge\theta^3$.
    Since $\Vol(\SO(3))=8\pi^2$ with respect to the metric making the $\theta^i$ orthonormal, we obtain $\int_{[\SO(3)]}\WZ_\p=-8\pi^2c_1$, which implies \eqref{eq:period_euclidean_c1}.
\end{proof}

Finally, we fix a reduced subbundle.
Since $G/K \cong \E^3$ is contractible, we fix a section $\sigma \colon M \to P_\rho/K$ and a reduced sub-$K$-bundle $\pi_\sigma \colon Q_\sigma \to M$, as in Section~\ref{subsec:CSinv}.

\subsection{The Chern--Simons invariant determined by flat connections}
\label{subsec:euclid_flat}

In this subsection, we assume that $P_\rho$ is trivial, and compute the \CSname{} form determined by the flat connection $\omega_\rho$ and a trivial connection.
Let $s \colon M \to P_\rho$ be a section, and let $\omega_\triv^s$ be the associated trivial connection.
We write
\[
    A_\rho \coloneqq s^*\omega_\rho
    = \theta^1J_1+\theta^2J_2+\theta^3J_3 + e^1P_1+e^2P_2+e^3P_3 \in \mathcal{A}^1(M;\g) ,
\]
where $\theta^i \coloneqq s^*\omega_\rho^{J_i}$ and $e^i \coloneqq s^*\omega_\rho^{P_i}$ are $1$-forms in $\mathcal{A}^1(M;\R)$.
We use this notation throughout this subsection.

\begin{prop}
    \label{prop:cs_form_euclidean}
    For the invariant polynomial $\p=\p_{c_1,c_2}$,
    \begin{equation}
        \CS_\p(\omega_\rho,\omega_\triv^s)
        = -c_1\,\theta^1\wedge\theta^2\wedge\theta^3 -c_2\,\diff(
        \theta^1\wedge e^1
        +\theta^2\wedge e^2
        +\theta^3\wedge e^3
        ) \in \mathcal{A}^3(M;\R).
    \end{equation}
\end{prop}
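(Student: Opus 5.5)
The plan is to reduce the statement directly to Lemma~\ref{lem:euclid_flat_potential}. Both $\omega_\rho$ and $\omega_\triv^s$ are flat, so formula \eqref{eq:cs_formula_flat_flat} applies and gives
\[
\CS_\p(\omega_\rho,\omega_\triv^s) = -\frac{1}{6}\langle a,[a,a]\rangle, \qquad a \coloneqq \omega_\rho-\omega_\triv^s .
\]
Here $a$ is regarded, via the trivialization given by $s$, as a $\g$-valued $1$-form on $M$. Under this trivialization $s^*\omega_\triv^s = 0$, so $a$ corresponds to $s^*\omega_\rho = A_\rho$. Alternatively, one can use Remark~\ref{rmk:rel_cs_and_absol_cs} in the form $\CS_\p(\omega_\rho,\omega_\triv^s) = s^*\CS_\p(\omega_\rho)$, which expresses the left-hand side as $\langle A_\rho,\diff A_\rho\rangle + \tfrac13\langle A_\rho,[A_\rho,A_\rho]\rangle$.

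Next I would check the hypothesis of the lemma. Pulling back the flatness equation $\diff\omega_\rho + \tfrac12[\omega_\rho,\omega_\rho]=0$ by $s$ gives the Maurer--Cartan equation $\diff A_\rho + \tfrac12[A_\rho,A_\rho]=0$ on $N=M$. Equivalently, the components $\theta^i, e^i$ satisfy \eqref{eq:flat_conditions_euclid}. Substituting $\diff A_\rho = -\tfrac12[A_\rho,A_\rho]$ into the absolute form reproduces $-\tfrac16\langle A_\rho,[A_\rho,A_\rho]\rangle$, which is consistent with the first paragraph.

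Finally, I apply Lemma~\ref{lem:euclid_flat_potential} with $A=A_\rho$ and $N=M$. It yields exactly
\[
-c_1\,\theta^1\wedge\theta^2\wedge\theta^3 - c_2\,\diff\Bigl(\sum_i \theta^i\wedge e^i\Bigr),
\]
which is the claimed expression.

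The only point needing care is the identification of $a$ with $A_\rho$ on $M$, that is, that the relative form on $M$ is computed by pulling back through $s$. This follows from naturality \eqref{eq:cs_naturality}, or from the last display of Remark~\ref{rmk:rel_cs_and_absol_cs}. I expect no genuine obstacle, since the algebra is already done in the lemma.
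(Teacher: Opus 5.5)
Your proposal is correct and follows the same route as the paper, whose proof simply applies Lemma~\ref{lem:euclid_flat_potential} with $A=A_\rho$. You have spelled out the steps the paper leaves implicit: the flat--flat formula \eqref{eq:cs_formula_flat_flat}, the identification of $a$ with $s^*\omega_\rho$ via the trivialization, and the Maurer--Cartan equation for $A_\rho$.
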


\begin{proof}
    The assertion follows by applying Lemma~\ref{lem:euclid_flat_potential} with $A=A_\rho$.
\end{proof}

With this preparation, we obtain the rotation-type \CSname{} invariant.

\begin{thm}
    \label{thm:cs_inv_flat_euclid}
    Let $\rho \colon \pi_1(M) \to G$ be a representation such that $P_\rho$ is trivial, and normalize the invariant polynomial with $c_1 = 1/(8\pi^2)$ and $c_2=0$.
    Then the \CSname{} invariant determined by the ordered pair $(\omega_\rho,\omega_\triv^s)$ is given by
    \begin{equation}
        \CS(M;\omega_\rho,\omega_\triv^s) \equiv -\frac{1}{8\pi^2} \int_M \theta^1 \wedge \theta^2 \wedge \theta^3 \pmod{\Z} . \label{eq:cs_inv_flat_euclid}
    \end{equation}
    This value is independent of the choice of the section $s$ and invariant under conjugation of $\rho$.
\end{thm}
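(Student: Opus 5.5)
The plan is to combine Proposition~\ref{prop:cs_form_euclidean} with the period computation of Lemma~\ref{lem:period_euclidean} and Lemma~\ref{lem:expanded_period}, and then check independence of choices through the gauge-invariance property \eqref{eq:cs_inv_gauge_indep}.

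\emph{Step 1: the formula.} With $c_1=1/(8\pi^2)$ and $c_2=0$, Proposition~\ref{prop:cs_form_euclidean} gives the identity of $3$-forms
\[
\CS_\p(\omega_\rho,\omega_\triv^s)=-\frac{1}{8\pi^2}\,\theta^1\wedge\theta^2\wedge\theta^3 .
\]
Integrating over $M$ gives the right-hand side of \eqref{eq:cs_inv_flat_euclid} as a real number. (Were $c_2\neq0$ allowed, the exact term would disappear by Stokes' theorem anyway, since $M$ is closed.)

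\emph{Step 2: the period group.} Because $P_\rho$ is trivial, Lemma~\ref{lem:expanded_period} gives $\Lambda_\p(P_\rho)\subset\Lambda_\p(G)$. Lemma~\ref{lem:period_euclidean} with $c_1=1/(8\pi^2)$ gives $\Lambda_\p(G)=\Z$. Hence the invariant of Definition~\ref{dfn:cs_inv_definition} is well defined in $\R/\Lambda_\p(P_\rho)$. We then reduce it further to $\R/\Z$, which is the meaning of the congruence \eqref{eq:cs_inv_flat_euclid}.

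\emph{Step 3: independence of $s$.} This is the step needing care. Take a second section $s'=s\cdot h$ with $h\colon M\to G$, and let $\varphi$ be the gauge transformation with $\varphi\circ s'=s$, or its inverse, arranged so that $\omega_\triv^{s'}=\varphi^*\omega_\triv^{s}$. Then \eqref{eq:cs_inv_gauge_indep} shows that the integrals for $s$ and $s'$ differ by an element of $\Lambda_\p(P_\rho)\subset\Z$. Alternatively, one can argue directly from the relation in Lemma~\ref{lem:expanded_period}: the difference is $\int_M h^*\WZ_\p\in\Lambda_\p(G)=\Z$, up to sign.

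\emph{Step 4: conjugation invariance.} If $\rho'=g\rho g^{-1}$, right multiplication by $g$ gives a bundle isomorphism $F\colon P_\rho\to P_{\rho'}$, $[\tilde m,h]\mapsto[\tilde m,gh]$, covering $\id_M$. It satisfies $F^*\omega_{\rho'}=\omega_\rho$. It also carries trivial connections to trivial connections, namely $F^*\omega_\triv^{F\circ s}=\omega_\triv^{s}$. Naturality \eqref{eq:cs_naturality} then gives equal forms on $M$. Combined with Step 3, the values agree modulo $\Z$.

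The main obstacle is Step 3: one must check that the change of section really is realized by a gauge transformation acting on the trivial connection alone, with $\omega_\rho$ left fixed. This is exactly what \eqref{eq:cs_inv_gauge_indep} is designed to handle.
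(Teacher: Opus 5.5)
Your proposal is correct and follows the paper's proof essentially step by step. You integrate Proposition~\ref{prop:cs_form_euclidean}, use Lemmas~\ref{lem:expanded_period} and~\ref{lem:period_euclidean} to get $\Lambda_\p(P_\rho)\subset\Lambda_\p(G)=\Z$, and treat a change of section as a gauge transformation of $\omega_\triv^s$ via \eqref{eq:cs_inv_gauge_indep}. For conjugation invariance you apply naturality \eqref{eq:cs_naturality} to the bundle isomorphism $[\widetilde{m},h]\mapsto[\widetilde{m},gh]$, which is just the inverse of the paper's map $h\mapsto g^{-1}h$. One wording slip: this map is left multiplication by $g$ on the fiber coordinate, not right multiplication, though the formula you wrote is the correct, well-defined, $G$-equivariant one.
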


\begin{proof}
    By Proposition~\ref{prop:cs_form_euclidean} with $c_1=1/(8\pi^2)$ and $c_2=0$, integrating $\CS_\p(\omega_\rho,\omega_\triv^s)$ over $M$ gives the right-hand side of \eqref{eq:cs_inv_flat_euclid}.
    From Lemma~\ref{lem:period_euclidean}, we have $\Lambda_\p(G)=8\pi^2c_1\Z=\Z$.
    Since $P_\rho$ is trivial, Lemma~\ref{lem:expanded_period} implies $\Lambda_\p(P_\rho)\subset\Z$.
    Therefore, the \CSname{} invariant is defined modulo $\Z$, and satisfies \eqref{eq:cs_inv_flat_euclid}.
    Two sections of the trivial bundle $P_\rho$ differ by a gauge transformation, so \eqref{eq:cs_inv_gauge_indep} shows that the value does not depend on $s$.
    Finally, for $\rho'=g\rho g^{-1}$ with $g\in G$, the map $h\mapsto g^{-1}h$ on the fiber induces an isomorphism $P_{\rho'}\to P_\rho$ of principal $G$-bundles over $\id_M$ which pulls $\omega_\rho$ back to $\omega_{\rho'}$ and trivial connections back to trivial connections; the invariance under conjugation therefore follows from naturality \eqref{eq:cs_naturality}.
\end{proof}

By Theorem~\ref{thm:cs_inv_flat_euclid}, this invariant is independent of the choice of the section and depends only on $(M,\rho)$.
From now on, we write $\CSrot{\E^3}(M,\rho) \coloneqq \CS(M;\omega_\rho,\omega_\triv^s) \in \R/\Z$, and call it the rotation-type \CSname{} invariant.
For a Euclidean manifold, the holonomy representation satisfies the assumption above, and $\CSrot{\E^3}$ coincides with the invariant $\CS(M,g)$ of Section~\ref{subsec:metric_cs}.

\begin{cor}
    \label{cor:cs_inv_flat_euclid_geometryhol}
    If $M$ supports Euclidean geometry with metric $g$ and holonomy representation $\rho_\gm \colon \pi_1(M) \to G$, then $P_{\rho_\gm}$ is trivial and, for the \CSname{} invariant \eqref{eq:metric_cs_explicit} of the Levi-Civita connection,
    \[
        \CSrot{\E^3}(M,\rho_\gm) = \CS(M,g) \in \R/\Z.
    \]
\end{cor}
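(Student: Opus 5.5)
We need to prove Corollary~\ref{cor:cs_inv_flat_euclid_geometryhol}: for a Euclidean manifold $M$ with holonomy $\rho_\gm$, $P_{\rho_\gm}$ is trivial and $\CSrot{\E^3}(M,\rho_\gm)=\CS(M,g)$.

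The plan is to identify $P_{\rho_\gm}$ with the oriented orthonormal frame bundle $\FrameBundle$, extended from $\SO(3)$ to $\SE(3)$. Under this identification, the $\so(3)$-part of the flat connection $\omega_{\rho_\gm}$ becomes the Levi-Civita connection.

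\emph{Triviality.} Let $D\colon\widetilde M\to\E^3$ be the developing map and $\sigma_D$ the section of $P_{\rho_\gm}/K$ from the proof of Lemma~\ref{lem:rep_vol_properties}\eqref{item:rep_vol_geometric_holonomy}. The reduced bundle $Q_{\sigma_D}$ consists of classes $[\widetilde m,(D(\widetilde m),A)]$. Send such a class to the orthonormal frame at $m$ given by pulling back the columns of $A$ via the local isometry $dD$. This map is well defined by $\rho_\gm$-equivariance, since $d(\gamma\cdot)$ acts on frames by the linear part of $\gamma$. It gives an isomorphism of $\SO(3)$-bundles $Q_{\sigma_D}\cong\FrameBundle$, hence $P_{\rho_\gm}\cong\FrameBundle\times_{\SO(3)}\SE(3)$. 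As recalled in Section~\ref{subsec:metric_cs}, $M$ is parallelizable, so $\FrameBundle$ is trivial and therefore so is $P_{\rho_\gm}$.

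\emph{Matching the connection.} Take a positively oriented orthonormal frame $s_0\colon M\to\FrameBundle\cong Q_{\sigma_D}$ and set $s\coloneqq\iota_{\sigma_D}\circ s_0$. Locally, lift to $\widetilde M$ and use $D$ to view $s_0$ as a map $x\mapsto(x,A(x))\in\SE(3)$ along $D$. The flat connection pulls back to the Maurer--Cartan form of this map. Its $\so(3)$-part is $A^{-1}\diff A$, which is exactly the Levi-Civita connection form $s_0^*\omega_\LC$ for the flat metric written in the frame $s_0$; its translation part is $A^{-1}\diff x$. So $\theta^i$ are the components of $s_0^*\omega_\LC$ in the basis $J_i$, where $J_i$ is identified with the standard basis $L_i$ of $\so(3)$.

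Since $g$ is flat, the curvature satisfies $\diff\theta+\tfrac12[\theta,\theta]=0$. Hence the integrand of \eqref{eq:metric_cs_explicit} reduces, via \eqref{eq:cs_formula_flat_flat} and Remark~\ref{rmk:metric_cs_as_relative}, to $-\tfrac16\langle A,[A,A]\rangle$ for $\p=\tfrac12P_1$. For this $\p$ we have $\langle L_i,L_j\rangle=\tfrac{1}{8\pi^2}\delta_{ij}$, which is $c_1=1/(8\pi^2)$ in \eqref{eq:inv_poly_euclid}. Thus $\CS(M,g)\equiv-\tfrac1{8\pi^2}\int_M\theta^1\wedge\theta^2\wedge\theta^3$, consistent with Lemma~\ref{lem:euclid_flat_potential}. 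Comparing with \eqref{eq:cs_inv_flat_euclid} gives the equality in $\R/\Z$. Independence of choices then follows from Theorem~\ref{thm:cs_inv_flat_euclid} and Remark~\ref{rmk:metric_cs_as_relative}.

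The main obstacle is checking signs and conventions in this matching. One must verify that the $\so(3)$-component of $\omega_{\rho_\gm}$ equals $\omega_\LC$, not its negative or transpose, relative to $\nabla E_b=\sum_a\omega_{ab}E_a$ with $J_i$ identified with $L_i$. One must also check that the developing-map orientation agrees with the orientation of $s_0$. A sign flip would be harmless for the cubic term: $\theta\mapsto-\theta$ changes $\theta^1\wedge\theta^2\wedge\theta^3$ by a sign, so the conventions must be fixed carefully. I would fix them by testing on the explicit left-invariant description $\nabla_{E}F=$ directional derivative in $\E^3$.
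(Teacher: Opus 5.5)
Your proposal is correct and follows essentially the same route as the paper: identify $Q_{\sigma_D}$ with $\FrameBundle$ to get triviality of $P_{\rho_\gm}$, take $s$ from a positively oriented orthonormal frame, recognize the $\so(3)$-components $\theta^i$ as the components of $s^*\omega_\LC$, and match $c_1=1/(8\pi^2)$ with $\p=(1/2)P_1$. The only difference is how $\theta$ is identified with $\omega_\LC$: the paper reads the translation equation \eqref{eq:flat_euclid_P} as the torsion-free first structure equation, whereas you use the explicit local formula $A^{-1}\diff A$ along the developing map. Your formula gives $\nabla E_b=\sum_a (A^{-1}\diff A)_{ab}E_a$, which is exactly the paper's convention, so the sign concerns you raise are settled by your own computation.
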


\begin{proof}
    The section $\sigma_D \colon M \to P_{\rho_\gm}/K$ determined by the developing map gives a reduced sub-$K$-bundle $Q_{\sigma_D}$, on which the $\R^3$-components $e=(e^1,e^2,e^3)$ of $\iota_{\sigma_D}^*\omega_{\rho_\gm}$ form the solder form for the metric $g$ induced by $D$: at each $q\in Q_{\sigma_D}$, the map $e_q\colon T_{\pi(q)}M\to\R^3$ is an orientation-preserving isometry.
    Hence $q\mapsto e_q^{-1}$ identifies $Q_{\sigma_D}$ with the orthonormal frame bundle $\FrameBundle$ of $(M,g)$.
    Since $\FrameBundle$ is trivial as shown in Section~\ref{subsec:metric_cs}, the bundle $P_{\rho_\gm} \cong Q_{\sigma_D}\times_K G$ is also trivial, and Theorem~\ref{thm:cs_inv_flat_euclid} applies.
    We choose $s$ to be a section of $Q_{\sigma_D}$, which corresponds to an orthonormal frame of $(M,g)$.
    The translation equation \eqref{eq:flat_euclid_P} then corresponds to the torsion-free condition in the first structure equation.
    Thus the connection with components $\theta^i$ is the Levi-Civita connection $\omega_\LC$.
    Furthermore, the restriction of $\p$ with $c_1=1/(8\pi^2)$ to $\so(3)$ agrees with $\p=(1/2)P_1$ in Remark~\ref{rmk:metric_cs_as_relative}.
    Therefore, $\CS_\p(\omega_{\rho_\gm},\omega_\triv^s)$ equals the \CSname{} form determined by $\omega_\LC$ and $\omega_\triv^s$ on $Q_{\sigma_D}=\FrameBundle$, and the invariant in Theorem~\ref{thm:cs_inv_flat_euclid} coincides with the \CSname{} invariant \eqref{eq:metric_cs_explicit} of the Levi-Civita connection.
\end{proof}

\subsection{Construction of the volume-type Chern--Simons invariant}
\label{subsec:euclid_connection}

In this subsection, we construct a \CSname{} invariant that equals the volume of the representation, without assuming that $P_\rho$ is trivial.
We use the section $\sigma$ and the reduced sub-$K$-bundle $Q_\sigma$ fixed in Section~\ref{subsec:euclid_prep}, and set $\theta^i \coloneqq \iota_\sigma^*\omega_\rho^{J_i}$ and $e^i \coloneqq \iota_\sigma^*\omega_\rho^{P_i}$ in $\mathcal{A}^1(Q_\sigma;\R)$.
We use this notation throughout this subsection.

We now construct a connection whose \CSname{} form equals the volume form of the representation.
Set $\widehat{e} \coloneqq \sum_{i=1}^3 e^i J_i \in \mathcal{A}^1(Q_\sigma;\kk)$.
By Proposition~\ref{prop:omega_is_well_defined_connection_euclid} below, this form defines an $\ad P_\rho$-valued $1$-form on $M$; we denote its pullback to $P_\rho$ by the same symbol $\widehat{e}$, and define a $1$-form $\omega^\sigma$ on $P_\rho$ by
\[
    \omega^\sigma \coloneqq \omega_\rho+\widehat{e} \in \mathcal{A}^1(P_\rho;\g) .
\]

\begin{prop}
    \label{prop:omega_is_well_defined_connection_euclid}
    The form $\widehat{e}$ defines an $\ad P_\rho$-valued $1$-form on $M$.
    Consequently, $\omega^\sigma$ is a connection on the principal $G$-bundle $P_\rho$.
\end{prop}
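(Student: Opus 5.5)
To show that $\widehat{e}$ descends to an $\ad P_\rho$-valued $1$-form on $M$, it suffices to check two things on $Q_\sigma$: that $\widehat{e}$ is horizontal, i.e.\ $\widehat{e}(B^*)=0$ for every $B\in\kk$, and that it is $\Ad$-equivariant, $R_k^*\widehat{e}=\Ad(k^{-1})\widehat{e}$ for $k\in K$. A horizontal, $\Ad$-equivariant $\kk\subset\g$-valued $1$-form on the reduced bundle $Q_\sigma$ is the same thing as a section of $T^*M\otimes(Q_\sigma\times_{\Ad}\g)$. Since $Q_\sigma\times_{\Ad}\g\cong P_\rho\times_{\Ad}\g=\ad P_\rho$, this section then pulls back to a unique tensorial form on $P_\rho$.

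\emph{Horizontality.} For $B=\sum b_iJ_i\in\kk$, the fundamental vector field $B^*$ on $Q_\sigma$ is the restriction of the fundamental vector field on $P_\rho$. Hence $\iota_\sigma^*\omega_\rho(B^*)=B$, whose $P_i$-components vanish. So $e^i(B^*)=0$ for each $i$, and therefore $\widehat{e}(B^*)=0$.

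\emph{Equivariance.} Since $\omega_\rho$ is a connection, $R_k^*\omega_\rho=\Ad(k^{-1})\omega_\rho$. The decomposition $\g=\R^3\oplus\so(3)$ is $\Ad(K)$-invariant, so the $\R^3$-part $e=\sum e^iP_i$ satisfies $R_k^*e=\Ad(k^{-1})e$. Here $\Ad(k)$ acts on $\R^3$ by the standard representation of $k\in\SO(3)$. The key observation is that the linear map $\R^3\to\so(3)$, $P_i\mapsto J_i$, intertwines the two adjoint actions, because the brackets satisfy $[J_i,P_j]=\varepsilon_{ijk}P_k$ and $[J_i,J_j]=\varepsilon_{ijk}J_k$. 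Thus the map commutes with $\ad(\kk)$, and since $K$ is connected it also commutes with $\Ad(K)$. Applying it gives $R_k^*\widehat{e}=\Ad(k^{-1})\widehat{e}$. This intertwining is the only nontrivial step, and it is immediate from the structure constants.

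\emph{Conclusion.} The difference of a connection and a tensorial $\g$-valued $1$-form of type $\Ad$ is again a connection: the vertical condition is unchanged because $\widehat{e}(B^*)=0$ for all $B\in\g$ on $P_\rho$, and equivariance is preserved. Since $\omega_\rho$ is a connection on $P_\rho$, it follows that $\omega^\sigma=\omega_\rho+\widehat{e}$ is a connection on $P_\rho$.
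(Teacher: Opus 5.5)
Your proposal is correct and follows essentially the same route as the paper. Horizontality comes from the vertical condition $\omega_\rho(B^*)=B\in\kk$, and equivariance comes from the $\Ad(K)$-invariance of $\g=\kk\oplus\R^3$ together with the $\Ad(K)$-equivariance of $P_i\mapsto J_i$, which is read off from the structure constants and the connectedness of $K$; the resulting tensorial form is then added to $\omega_\rho$. Your explicit identification $Q_\sigma\times_K\g\cong\ad P_\rho$, and your remark that the extended form vanishes on all vertical vectors of $P_\rho$, only spell out steps the paper leaves implicit.
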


\begin{proof}
    For any $B \in \kk$, the vertical condition for $\omega_\rho$ gives $\omega_\rho(B^*)=B \in \kk$.
    Hence $e^i(B^*)=0$, and $\widehat{e}$ is horizontal.
    Since $\kk$ is a subalgebra and $\R^3$ is an ideal, the decomposition $\g=\kk\oplus\R^3$ is $\Ad(K)$-invariant.
    By the equivariance of $\omega_\rho$, the triple $(e^1,e^2,e^3)$ transforms under $\Ad(K)|_{\R^3}$, which is the standard representation of $\SO(3)$.
    The structure constants of $[J_i,P_j]$ and $[J_i,J_j]$ are both $\varepsilon_{ijk}$, so the linear map $P_i \mapsto J_i$ is $\kk$-equivariant.
    Since $K$ is connected, this map is an $\Ad(K)$-equivariant isomorphism $\R^3 \to \kk$.
    Thus $\widehat{e}$ is horizontal and $\Ad(K)$-equivariant, and descends to a $1$-form on $M$ with values in $\ad P_\rho = Q_\sigma\times_K\g$.
    Since the sum of a connection and an $\ad P_\rho$-valued $1$-form is a connection, $\omega^\sigma$ is a connection on $P_\rho$.
\end{proof}

Next, we show that the \CSname{} form determined by $\omega^\sigma$ and $\omega_\rho$ agrees with the volume form of the representation.
We normalize the $G$-invariant volume form $\vol_G$ on $G/K \cong \E^3$ such that $\vol_G = P_1^*\wedge P_2^*\wedge P_3^*$ at the identity.
By the definition of $\vol_\rho$, the $\R^3$-components $e^i$ of $\iota_\sigma^*\omega_\rho$ satisfy
\begin{equation}
    \pi_\sigma^*(\sigma^*\vol_\rho) = e^1\wedge e^2\wedge e^3 \in \mathcal{A}^3(Q_\sigma;\R).
    \label{eq:volume_form_components_euclid}
\end{equation}

\begin{prop}
    \label{prop:cs_auxiliary_connection_euclidean}
    For the connection $\omega^\sigma$ associated with any section $\sigma \colon M \to P_\rho/K$ and the invariant polynomial $\p=\p_{c_1,c_2}$, we have
    \begin{equation}
        \CS_\p(\omega^\sigma,\omega_\rho) = (2c_1+6c_2)\,\sigma^*\vol_\rho \in \mathcal{A}^3(M;\R).
        \label{eq:cs_vol_form_euclid}
    \end{equation}
\end{prop}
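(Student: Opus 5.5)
The plan is to compute the relative \CSname{} form on the reduced subbundle $Q_\sigma$ and then descend it to $M$, as in the proof of Proposition~\ref{prop:nil_cs_equals_volume}. Since $\omega_\rho$ is flat, formula \eqref{eq:cs_formula_flat_l2} applies with $a \coloneqq \omega^\sigma-\omega_\rho = \widehat{e}$. It gives
\[
	\iota_\sigma^*\pi^*\CS_\p(\omega^\sigma,\omega_\rho) = \langle \widehat{e}, \iota_\sigma^*\diff_{\omega_\rho}\widehat{e}\rangle + \frac{1}{3}\langle \widehat{e},[\widehat{e},\widehat{e}]\rangle .
\]
On $Q_\sigma$ we have $\iota_\sigma^*\omega_\rho = \sum_i \theta^iJ_i + \sum_i e^iP_i$ and $\iota_\sigma^*\widehat{e} = \sum_i e^iJ_i$. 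Every term can therefore be expanded using the brackets of $\se(3)$ and the components \eqref{eq:inv_poly_euclid} of $\p$. Throughout, $(i,j,k)$ runs over cyclic permutations.

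First I will compute the covariant derivative $\diff_{\omega_\rho}\widehat{e} = \diff\widehat{e} + [\omega_\rho,\widehat{e}]$ on $Q_\sigma$ and split it into $\kk$- and $\R^3$-components.

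\emph{The $J_i$-component.} It is $\diff e^i + \theta^j\wedge e^k - \theta^k\wedge e^j$, coming from $\diff\widehat{e}$ and from $[\theta\cdot J, e\cdot J]$. This vanishes by the translational flatness equation \eqref{eq:flat_euclid_P}, which plays the role of the torsion-free condition.

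\emph{The $P_i$-component.} The only contribution is $[\sum_j e^jP_j, \sum_k e^kJ_k]$. Using $[P_j,J_k] = -\varepsilon_{kjl}P_l$, this equals $2\,e^j\wedge e^k$.

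Hence $\iota_\sigma^*\diff_{\omega_\rho}\widehat{e} = \sum_i 2\,e^j\wedge e^k\,P_i$. Pairing with $\widehat{e}$ through $\langle J_i,P_i\rangle = c_2$ gives $3\cdot 2c_2\, e^1\wedge e^2\wedge e^3 = 6c_2\,e^1\wedge e^2\wedge e^3$.

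For the cubic term, $[\widehat{e},\widehat{e}] = \sum_i 2\,e^j\wedge e^k\,J_i$ by the $\so(3)$ brackets. Pairing through $\langle J_i,J_i\rangle = c_1$ gives $6c_1\,e^1\wedge e^2\wedge e^3$, so one third of it contributes $2c_1\,e^1\wedge e^2\wedge e^3$. Adding the two terms yields
\[
	\iota_\sigma^*\pi^*\CS_\p(\omega^\sigma,\omega_\rho) = (2c_1+6c_2)\,e^1\wedge e^2\wedge e^3 .
\]
By \eqref{eq:volume_form_components_euclid}, the right-hand side equals $(2c_1+6c_2)\,\pi_\sigma^*(\sigma^*\vol_\rho)$. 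Since $\pi_\sigma$ is a surjective submersion, $\pi_\sigma^*$ is injective, and \eqref{eq:cs_vol_form_euclid} follows.

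The main obstacle is bookkeeping of signs and factors of $2$: the convention $[\alpha\otimes\xi,\beta\otimes\eta] = (\alpha\wedge\beta)\otimes[\xi,\eta]$, the antisymmetry $[P_j,J_k] = -[J_k,P_j]$, and the cyclic sums. I would check the $J$-component cancellation against \eqref{eq:flat_euclid_P} term by term, since that cancellation is what kills any $\theta$-dependence. I would also confirm that the pairing $\langle \widehat{e}, \diff_{\omega_\rho}\widehat{e}\rangle$ picks up only the $P$-components. This holds because $\langle J_i,J_j\rangle$ would multiply the vanishing $J$-component, and no $\langle P,P\rangle$ terms occur.
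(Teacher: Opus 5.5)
Your proposal is correct and follows essentially the same route as the paper. You evaluate \eqref{eq:cs_formula_flat_l2} on $Q_\sigma$ with $a=\widehat{e}$, use \eqref{eq:flat_euclid_P} to kill the $\so(3)$-component of $\diff_{\omega_\rho}\widehat{e}$, pair the remaining $\R^3$-component and $[\widehat{e},\widehat{e}]$ against $\widehat{e}$ to get $6c_2$ and $2c_1$ (your signs and factors of $2$ match the paper's), and descend to $M$ via injectivity of $\pi_\sigma^*$.
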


\begin{proof}
    Set $a\coloneqq\omega^\sigma-\omega_\rho$ and compute \eqref{eq:cs_formula_flat_l2} on $Q_\sigma$.
    Writing $A\coloneqq\iota_\sigma^*\omega_\rho$, we have $\iota_\sigma^*\diff_{\omega_\rho}a=\diff\widehat{e}+[A,\widehat{e}]$, whose $\so(3)$-component vanishes by the translation part \eqref{eq:flat_euclid_P} of the flatness condition.
    By the Lie brackets, the remaining $\R^3$-component and $[\widehat{e},\widehat{e}]$ are
    \begin{align*}
        \iota_\sigma^*\diff_{\omega_\rho}a &= 2e^2\wedge e^3 P_1 + 2e^3\wedge e^1 P_2 + 2e^1\wedge e^2 P_3, \\
        [\widehat{e},\widehat{e}] &= 2e^2\wedge e^3 J_1 + 2e^3\wedge e^1 J_2 + 2e^1\wedge e^2 J_3 .
    \end{align*}
    Hence the components $\langle P_i,J_j\rangle=c_2\delta_{ij}$ and $\langle J_i,J_j\rangle=c_1\delta_{ij}$ of \eqref{eq:inv_poly_euclid} give
    \[
        \langle \widehat{e},\iota_\sigma^*\diff_{\omega_\rho}a\rangle = 6c_2\,e^1\wedge e^2\wedge e^3,
        \quad
        \frac{1}{3}\langle \widehat{e},[\widehat{e},\widehat{e}]\rangle = 2c_1\,e^1\wedge e^2\wedge e^3 ,
    \]
    which implies
    \[
        \pi_\sigma^*\CS_\p(\omega^\sigma,\omega_\rho) = (2c_1+6c_2)\,e^1\wedge e^2\wedge e^3 .
    \]
    The assertion follows from the injectivity of the pullback $\pi_\sigma^*$ along a surjective submersion together with \eqref{eq:volume_form_components_euclid}.
\end{proof}

We now state the main theorem of this section.

\begin{thm}
    \label{thm:euclidean_cs_equals_volume}
    Let $\rho \colon \pi_1(M) \to G$ be a representation, and normalize the invariant polynomial with $c_1=0$ and $c_2=1/6$.
    Then the \CSname{} invariant determined by $\omega^\sigma$ and $\omega_\rho$ coincides with the volume of the representation:
    \begin{equation}
        \CS(M;\omega^\sigma,\omega_\rho) = \Vol_G(M,\rho) \in \R . \label{eq:cs_inv_volume_euclid}
    \end{equation}
    This value is independent of the choice of the section $\sigma$ and invariant under conjugation of $\rho$.
\end{thm}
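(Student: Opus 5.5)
The argument runs parallel to the proofs of Theorems~\ref{thm:cs_inv_is_vol_nil} and~\ref{thm:cs_inv_vol_sol}, with one extra point: here we must show that the period group vanishes even though $P_\rho$ need not be trivial. That point is the only step that is not a direct substitution.

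First, with $c_1=0$ and $c_2=1/6$ we have $2c_1+6c_2=1$. Proposition~\ref{prop:cs_auxiliary_connection_euclidean} then gives
\[
\CS_\p(\omega^\sigma,\omega_\rho)=\sigma^*\vol_\rho \in \mathcal{A}^3(M;\R).
\]
Integrating over $M$ and using the definition \eqref{eq:representation_volume}, we get
\[
\int_M\CS_\p(\omega^\sigma,\omega_\rho)=\Vol_G(M,\rho).
\]

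Second, we show $\Lambda_\p(P_\rho)=0$, so that the invariant of Definition~\ref{dfn:cs_inv_definition} is $\R$-valued. By Lemma~\ref{lem:period_euclidean}, the choice $c_1=0$ makes $\WZ_\p$ exact on $G$, so $[\WZ_\p]=0$ in $H^3(G;\R)$. The maximal compact subgroup is $K=\SO(3)$, whose real cohomology is that of $S^3$ (indeed $\SO(3)\cong\R P^3$). Hence $H^1(K;\R)=H^2(K;\R)=0$, and Corollary~\ref{cor:period_is_zero_criteria}\eqref{item:period_zero_cohomology} applies to give $\Lambda_\p(P_\rho)=0$ without any triviality assumption on $P_\rho$. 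This is the step I regard as the main (mild) obstacle. Case~\eqref{item:period_zero_trivial} is not available, because $P_\rho$ need not be trivial. Instead one must check the rational cohomology of $\SO(3)$ and the hypotheses of the Serre spectral sequence argument, which was carried out in that corollary.

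Finally, independence of $\sigma$ and invariance under conjugation are inherited from the volume of the representation (\cite[Lemma 2.2(1), Proposition 2.3]{DLSW}). Since the invariant equals $\Vol_G(M,\rho)$ for every choice of $\sigma$, it shares these properties, exactly as in the Nil and Sol cases.
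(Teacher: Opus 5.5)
Your proposal is correct and matches the paper's proof step for step. You substitute $c_1=0$, $c_2=1/6$ into Proposition~\ref{prop:cs_auxiliary_connection_euclidean} and integrate. You show $\Lambda_\p(P_\rho)=0$ via Lemma~\ref{lem:period_euclidean} and Corollary~\ref{cor:period_is_zero_criteria}\eqref{item:period_zero_cohomology}, using $H^1(\SO(3);\R)=H^2(\SO(3);\R)=0$. Independence of $\sigma$ and conjugation invariance are then inherited from $\Vol_G(M,\rho)$.
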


\begin{proof}
    By Proposition~\ref{prop:cs_auxiliary_connection_euclidean} with $c_1=0$ and $c_2=1/6$, we have
    $\CS_\p(\omega^\sigma,\omega_\rho)=\sigma^*\vol_\rho$.
    Integrating this over $M$ yields the right-hand side of \eqref{eq:cs_inv_volume_euclid}.
    Since $c_1=0$, Lemma~\ref{lem:period_euclidean} implies $[\WZ_\p]=0$ in $H^3(G;\R)$.
    Moreover, $K=\SO(3)\cong\R P^3$ satisfies $H^1(K;\R)=H^2(K;\R)=0$.
    Hence Corollary~\ref{cor:period_is_zero_criteria}\eqref{item:period_zero_cohomology} gives $\Lambda_\p(P_\rho)=0$.
    Thus the \CSname{} invariant of Definition~\ref{dfn:cs_inv_definition} takes values in $\R$ and equals $\Vol_G(M,\rho)$.
    Finally, the volume of a representation is independent of the choice of the section and invariant under conjugation of $\rho$, and hence so is the \CSname{} invariant that agrees with it.
\end{proof}

As in $\Nil$- and $\Sol$-geometry, we write $\CSvol{\E^3}(M,\rho) \coloneqq \CS(M;\omega^\sigma,\omega_\rho) \in \R$ from now on.
Moreover, when $\rho_\gm$ is the holonomy representation, \eqref{eq:rep_vol_geometric_holonomy} and Theorem~\ref{thm:euclidean_cs_equals_volume} yield the following corollary.

\begin{cor}
    \label{cor:euclidean_cs_vol_geohom}
    If $M$ supports Euclidean geometry with metric $g$ and holonomy representation $\rho_\gm \colon \pi_1(M) \to G$, then the volume form of the standard Euclidean metric agrees with $\vol_G = P_1^*\wedge P_2^*\wedge P_3^*$ normalized above, and
    \begin{equation}
        \CSvol{\E^3}(M,\rho_\gm)= \Vol_G(M,\rho_\gm) = \Vol(M,g).
    \end{equation}
\end{cor}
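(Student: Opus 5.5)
The statement is Corollary~\ref{cor:euclidean_cs_vol_geohom}. It has two parts: an identification of volume forms, followed by a chain of two equalities, both of which come from earlier results.

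First, we check that the volume form of the standard metric $g_{\E^3}$ in \eqref{eq:euclid_metric} is the $G$-invariant form normalized above. The frame $P_1,P_2,P_3$ is orthonormal and left-invariant, and we orient $\E^3$ by $\diff x_1\wedge\diff x_2\wedge\diff x_3$. The Riemannian volume form is then $\diff x_1\wedge\diff x_2\wedge\diff x_3$. It is invariant under $G=\SE(3)$, since translations preserve it and $\det A=1$ for $A\in\SO(3)$. At the identity it equals $P_1^*\wedge P_2^*\wedge P_3^*$. Remark~\ref{rmk:structure_group_connected_components} says a $G$-invariant volume form is unique up to a constant, and the two forms agree at one point, so the Riemannian volume form equals $\vol_G$. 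The stabilizer of the origin is $K=\SO(3)$, which gives the identification $\E^3\cong G/K$ needed in Lemma~\ref{lem:rep_vol_properties}\eqref{item:rep_vol_geometric_holonomy}.

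Next, we apply Lemma~\ref{lem:rep_vol_properties}\eqref{item:rep_vol_geometric_holonomy} to the $(\E^3,G)$-structure on $M$. Its holonomy is $\rho_\gm$, the metric $g$ and the orientation are induced by the developing map, and $\vol_G$ is the volume form of the $G$-invariant metric $g_{\E^3}$. The lemma gives $\Vol_G(M,\rho_\gm)=\Vol(M,g)$.

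Finally, we apply Theorem~\ref{thm:euclidean_cs_equals_volume} with $\rho=\rho_\gm$ and the normalization $c_1=0$, $c_2=1/6$. This theorem assumes nothing about $P_{\rho_\gm}$, so it gives $\CSvol{\E^3}(M,\rho_\gm)=\CS(M;\omega^\sigma,\omega_{\rho_\gm})=\Vol_G(M,\rho_\gm)$ in $\R$. Combining this with the previous step proves the corollary.

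No step here is a real obstacle. The only point that needs care is the orientation convention. The orientation of $M$ induced by the developing map must match the orientation for which $P_1^*\wedge P_2^*\wedge P_3^*$ is positive. This is exactly the convention fixed in the introduction, and Lemma~\ref{lem:rep_vol_properties} already handles it.
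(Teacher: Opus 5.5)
Your proposal is correct and follows the paper's route: the paper also obtains the corollary by combining the geometric-holonomy identity \eqref{eq:rep_vol_geometric_holonomy} from Lemma~\ref{lem:rep_vol_properties} with Theorem~\ref{thm:euclidean_cs_equals_volume}. Your extra check that $\diff x_1\wedge\diff x_2\wedge\diff x_3$ is $G$-invariant and equals $P_1^*\wedge P_2^*\wedge P_3^*$ at the origin simply spells out the volume-form identification that the paper states without comment.
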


\subsection{Examples: flat manifolds}
\label{subsec:euclid_examples}

In this subsection, we apply Corollary~\ref{cor:euclidean_cs_vol_geohom} to closed $3$-manifolds supporting Euclidean geometry (flat manifolds), and compute the \CSname{} invariants of their Levi-Civita connections.

We first review some standard facts on flat manifolds.
Let $(M,g)$ be a closed oriented flat $3$-manifold with holonomy representation $\rho_\gm$, and set
\[
    \Gamma \coloneqq \rho_\gm(\pi_1(M)) \subset \SE(3), \quad
    L \coloneqq \Gamma\cap\R^3, \quad
    H \coloneqq \Gamma/L, \quad
    h \coloneqq |H| .
\]
We call $L$ the translation subgroup and $H$ the linear holonomy group; writing $\mathrm{rot}\colon\SE(3)\to\SO(3)$ for the projection to the rotation part, $L$ is the kernel of $\mathrm{rot}|_\Gamma$ and $H\cong \mathrm{rot}(\Gamma)$, so that $H$ is the image of the rotation part of $\rho_\gm$ and is to be distinguished from $\Gamma$ itself.
By the first Bieberbach theorem, $L$ is a lattice of rank $3$ in $\R^3$ and has finite index in $\Gamma$ \cite[Theorem 3.2.1]{Wolf}.
In particular, $H$ is a finite group of order $h = [\Gamma:L]$.
By the Bieberbach classification, there are exactly six closed orientable flat $3$-manifolds up to diffeomorphism, distinguished by their linear holonomy groups $H$.
Table~\ref{tab:flat_manifolds_holonomy} lists the group $H$, its order $h$, the first homology group $H_1(M;\Z)$ \cite[Theorem 3.5.5, Corollary 3.5.10]{Wolf}, and the eta invariant $\eta(M,g)$ \cite[Example 1]{Szczepanski}.

\begin{table}[ht]
    \caption{The linear holonomy group $H$, its order $h$, the first homology group, and the eta invariant of the closed orientable flat $3$-manifolds.}
    \label{tab:flat_manifolds_holonomy}
    \setlength{\tabcolsep}{4pt}
    \begin{tabular}{c|cccccc}
        $H$         & $1$    & $\Z/2$             & $\Z/3$         & $\Z/4$         & $\Z/6$ & $(\Z/2)^2$  \\ \hline
        $h$         & $1$    & $2$                & $3$            & $4$            & $6$    & $4$         \\
        $H_1(M;\Z)$ & $\Z^3$ & $\Z\oplus(\Z/2)^2$ & $\Z\oplus\Z/3$ & $\Z\oplus\Z/2$ & $\Z$   & $(\Z/4)^2$  \\
        $\eta(M,g)$ & $0$    & $0$                & $-2/3$         & $-1$           & $-4/3$ & $0$
    \end{tabular}
\end{table}

With these facts, we first compute the volume-type \CSname{} invariant.

\begin{prop}
    \label{prop:euclid_flat_manifolds}
    In the notation above,
    \[
        \CSvol{\E^3}(M,\rho_\gm) = \Vol(M,g) = \frac{\covol(L)}{h}.
    \]
\end{prop}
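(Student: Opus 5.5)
The first equality is immediate from Corollary~\ref{cor:euclidean_cs_vol_geohom}, which gives $\CSvol{\E^3}(M,\rho_\gm)=\Vol_G(M,\rho_\gm)=\Vol(M,g)$. So the real content is the volume formula $\Vol(M,g)=\covol(L)/h$. I would prove it by comparing $M$ with the torus $T\coloneqq L\backslash\E^3$ and using either the naturality of the volume of representations or a direct covering argument.

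The key steps are as follows.

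First, since $\rho_\gm$ is the holonomy of a complete flat structure on the closed manifold $M$, the developing map is an isometry $\widetilde M\cong\E^3$. Hence $\rho_\gm$ is injective, $\Gamma$ acts freely and properly discontinuously on $\E^3$, and $M\cong\Gamma\backslash\E^3$ as oriented Riemannian manifolds.

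Second, $L=\Gamma\cap\R^3$ is a normal subgroup of $\Gamma$, being the kernel of $\mathrm{rot}|_\Gamma$, and it has index $h$ by the first Bieberbach theorem. Therefore the natural map $q\colon T=L\backslash\E^3\to\Gamma\backslash\E^3=M$ is an $h$-sheeted covering. It is a local isometry, and it is orientation-preserving because $\Gamma\subset\SE(3)$ acts by orientation-preserving isometries, so $\deg q=h$.

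Third, I compute the volume of $T$. A fundamental parallelepiped of $L$ is a fundamental domain for $T$, so $\Vol(T)=\covol(L)$, measured with the standard metric \eqref{eq:euclid_metric}, whose volume form is $\vol_G=P_1^*\wedge P_2^*\wedge P_3^*$.

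Finally, the covering relation gives $\Vol(T)=h\,\Vol(M,g)$. I would derive this either directly, since a local isometry of degree $h$ multiplies the volume by $h$, or through the naturality \eqref{eq:rep_vol_naturality} together with Lemma~\ref{lem:rep_vol_properties}\eqref{item:rep_vol_geometric_holonomy}. In the latter approach, $\rho_\gm\circ q_*$ is the holonomy of the induced flat structure on $T$, so
\[
    \covol(L)=\Vol_G(T,\rho_\gm\circ q_*)=h\,\Vol_G(M,\rho_\gm)=h\,\Vol(M,g).
\]
Dividing by $h$ gives the stated formula.

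None of these steps is difficult. The main point needing care is the bookkeeping of the second step: the index $[\Gamma:L]$ equals $h$ because $H\cong\Gamma/L$, and the covering map $q$ preserves orientation. Once these are checked, the rest follows from results already established.
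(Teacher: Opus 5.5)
Your proposal is correct and follows essentially the same route as the paper. The paper's proof uses the $h$-fold locally isometric covering $\R^3/L\to M$ to get $\covol(L)=h\,\Vol(M,g)$, and then applies Corollary~\ref{cor:euclidean_cs_vol_geohom} for the first equality. Your alternative via the naturality \eqref{eq:rep_vol_naturality} is also valid, but it is not needed.
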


\begin{proof}
    Set $T_L \coloneqq \R^3/L$.
    Since $L$ is normal in $\Gamma$, the projection $q \colon T_L \to M$ is an $h$-fold covering.
    Because $q$ is a local isometry, we have $\covol(L) = \Vol(T_L,g) = h\,\Vol(M,g)$.
    Applying Corollary~\ref{cor:euclidean_cs_vol_geohom} completes the proof.
\end{proof}

Rescaling all lengths by a factor $t>0$, that is, replacing $g$ by $t^2g$, scales the lattice $L$ by $t$ and its covolume by $t^3$.
For each of the six manifolds, we can normalize the metric such that $\covol(L)=1$, in which case the volume becomes $1/h$.
However, this is a metric normalization; the value $1/h$ is not an invariant of the diffeomorphism type of $M$ alone.
Since the volume-type \CSname{} invariant coincides with the Riemannian volume, it scales accordingly.

Next, we compute the \CSname{} invariant of the Levi-Civita connection for flat manifolds.
Unlike the $\Nil$ and $\Sol$ cases, a flat manifold with $H\neq 1$ is not a quotient of $\E^3$ by a lattice, so Lemma~\ref{lem:metric_cs_left_invariant} does not apply with $X=\E^3$; we treat all six manifolds at once through the eta invariant and \eqref{eq:aps_relation}.

\begin{prop}
    \label{prop:metric_cs_euclid}
    Let $(M,g)$ be a closed oriented flat $3$-manifold whose eta invariant is, up to sign, as listed in Table~\ref{tab:flat_manifolds_holonomy}.
    Then
    \[
        \CS(M,g) = 0 \in \R/\Z .
    \]
\end{prop}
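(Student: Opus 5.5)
Use the Atiyah--Patodi--Singer relation \eqref{eq:aps_relation}, $\CS(M,g)\equiv \tfrac32\eta(M,g)+\tfrac12\sigma_2(M) \pmod{\Z}$, and check case by case from Table~\ref{tab:flat_manifolds_holonomy} that the right-hand side is an integer. The hypothesis allows the eta invariant to be $\pm$ the listed value, so I will show that $\tfrac32\eta+\tfrac12\sigma_2\in\Z$ for either sign. This suffices because $-\tfrac32\eta\equiv\tfrac32\eta \pmod \Z$ exactly when $3\eta\in\Z$, which holds for every entry.

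First read off $\sigma_2(M)$, the number of $2$-primary cyclic summands of $H_1(M;\Z)$, from the table:
\begin{itemize}
\item $\sigma_2=0$ for $\Z^3$, $\Z\oplus\Z/3$ and $\Z$;
\item $\sigma_2=2$ for $\Z\oplus(\Z/2)^2$ and $(\Z/4)^2$;
\item $\sigma_2=1$ for $\Z\oplus\Z/2$.
\end{itemize}
Here the free summand $\Z$ is not $2$-primary, and $\Z/4$ counts as one summand.

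Next compute $\tfrac32\eta$ for each $H$:
\begin{itemize}
\item $H=1$, $\Z/2$, $(\Z/2)^2$: $\eta=0$, so $\tfrac32\eta=0$.
\item $H=\Z/3$: $\tfrac32\eta=\mp1$.
\item $H=\Z/6$: $\tfrac32\eta=\mp2$.
\item $H=\Z/4$: $\tfrac32\eta=\mp\tfrac32$.
\end{itemize}
Adding $\tfrac12\sigma_2$ then gives:
\begin{itemize}
\item $H=1$: $0$;
\item $H=\Z/2$: $0+1=1$;
\item $H=\Z/3$: $\mp1+0$;
\item $H=\Z/4$: $\mp\tfrac32+\tfrac12\in\{-1,2\}$;
\item $H=\Z/6$: $\mp2+0$;
\item $H=(\Z/2)^2$: $0+1=1$.
\end{itemize}
All six values are integers, so $\CS(M,g)=0$ in $\R/\Z$.

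The main point needing care is conventions. The orientation and sign conventions for $\eta$ in \cite{Szczepanski} may differ from those in \eqref{eq:aps_relation} (\cite{Ouyang}), and this is why the statement says ``up to sign''; the computation above is symmetric in that sign.

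The other delicate point is that \eqref{eq:aps_relation} uses the same normalization of $\CS(M,g)$ as \eqref{eq:metric_cs_explicit}. The $H=\Z/4$ case is the only one where $\tfrac32\eta$ is not itself an integer, and there the $\tfrac12\sigma_2$ term is essential: it is the half-integer contribution from the single $\Z/2$ summand that cancels the half-integer $\pm\tfrac32$.
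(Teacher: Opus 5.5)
Your proposal is correct and follows essentially the same route as the paper's proof. You apply \eqref{eq:aps_relation}, read off $\sigma_2=0,2,0,1,0,2$ from the table, and check that $\tfrac32\eta+\tfrac12\sigma_2$ is an integer for both signs of $\eta$; your values $0,1,-1,-1,-2,1$ and $0,1,1,2,2,1$ match the paper's exactly.
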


\begin{proof}
    Table~\ref{tab:flat_manifolds_holonomy} gives $H_1(M;\Z)$, from which the values of $\sigma_2(M)$
    for the six types are $0, 2, 0, 1, 0, 2$, respectively.
    Combining these with $\eta(M,g)$ in Table~\ref{tab:flat_manifolds_holonomy}, the right-hand side of \eqref{eq:aps_relation} yields $0, 1, -1, -1, -2, 1$, which are all integers.
    For the opposite sign of $\eta(M,g)$, the values are $0, 1, 1, 2, 2, 1$, which are also integers.
    Thus $\CS(M,g) \equiv 0 \pmod{\Z}$.
\end{proof}

Together with Corollary~\ref{cor:cs_inv_flat_euclid_geometryhol}, this implies that the rotation-type \CSname{} invariant $\CSrot{\E^3}(M,\rho_\gm)$ also vanishes for the flat metrics treated in Proposition~\ref{prop:metric_cs_euclid}.

\medskip
\noindent\textbf{Acknowledgments.}
The author is deeply grateful to his supervisor, Takefumi Nosaka, for many valuable discussions and constant encouragement throughout this work.

\medskip
\noindent\textbf{Use of AI tools.}
During the preparation of this paper, the author used LLMs for English translation, language editing, and assistance with \LaTeX{} typesetting.
All mathematical content was developed by the author, who takes full intellectual responsibility for the contents of this paper.

\end{document}